\documentclass{amsart}
\usepackage{graphicx}
\usepackage[latin2]{inputenc}
\usepackage{amsmath,amssymb,amsbsy,amsthm,amsfonts,a4wide}
\usepackage{mathtools}
\usepackage[margin=1.9cm]{geometry}




\usepackage{soul,xcolor}

\DeclareMathOperator*{\essinf}{ess\,inf}
\DeclareMathOperator{\spt}{supp}

\begin{document}
\newcommand{\ddd}{\,{\rm d}}

\def\note#1{\marginpar{\small #1}}
\def\tens#1{\pmb{\mathsf{#1}}}
\def\vec#1{\boldsymbol{#1}}

\def\norm#1{\left|\!\left| #1 \right|\!\right|}
\def\fnorm#1{|\!| #1 |\!|}
\def\abs#1{\left| #1 \right|}
\def\ti{\text{I}}
\def\tii{\text{I\!I}}
\def\tiii{\text{I\!I\!I}}
\def\diam{\mathop{\mathrm{diam}}\nolimits}

\def\diver{\mathop{\mathrm{div}}\nolimits}
\def\dist{\mathop{\mathrm{dist}}\nolimits}
\def\grad{\mathop{\mathrm{grad}}\nolimits}
\def\Div{\mathop{\mathrm{Div}}\nolimits}
\def\Grad{\mathop{\mathrm{Grad}}\nolimits}

\def\tr{\mathop{\mathrm{tr}}\nolimits}
\def\cof{\mathop{\mathrm{cof}}\nolimits}
\def\det{\mathop{\mathrm{det}}\nolimits}

\def\lin{\mathop{\mathrm{span}}\nolimits}
\def\pr{\noindent \textbf{Proof: }}
\def\pp#1#2{\frac{\partial #1}{\partial #2}}
\def\dd#1#2{\frac{\d #1}{\d #2}}

\def\T{\mathcal{T}}
\def\R{\mathcal{R}}
\def\Re{\mathbb{R}}
\def\bx{\vec{x}}
\def\bz{\vec{z}}
\def\be{\vec{e}}
\def\bef{\vec{f}}
\def\bec{\vec{c}}
\def\bs{\vec{s}}
\def\ba{\vec{a}}
\def\bn{\vec{n}}
\def\bphi{\vec{\varphi}}
\def\btau{\vec{\tau}}
\def\bc{\vec{c}}
\def\bg{\vec{g}}
\def\measure{\mathop{\mathrm{meas}}\nolimits}
\def\mM{\mathcal{M}}
\def\mL#1#2{\mathcal{L}^{\alpha_{#2}}_{#1}}

\def\bW{\tens{W}}
\def\bQ{\tens{Q}}
\def\bA{\tens{A}}
\def\cA{\mathcal{A}}
\def\cB{\mathcal{B}}
\def\bT{\tens{T}}
\def\bP{\tens{P}}
\def\bD{\tens{D}}
\def\bO{\tens{O}}
\def\bG{\tens{G}}
\def\bF{\tens{F}}
\def\bB{\tens{B}}
\def\bE{\tens{E}}
\def\bV{\tens{V}}
\def\bS{\tens{S}}
\def\bI{\tens{I}}
\def\bH{\tens{H}}
\def\beps{\tens{\varepsilon}}
\def\bi{\vec{i}}
\def\bv{\vec{v}}
\def\bfi{\vec{\varphi}}
\def\bk{\vec{k}}
\def\b0{\vec{0}}
\def\bom{\vec{\omega}}
\def\bw{\vec{w}}
\def\p{\pi}
\def\bu{\vec{u}}
\def\bq{\vec{q}}

\def\ID{\mathcal{I}_{\bD}}
\def\IP{\mathcal{I}_{p}}
\def\Pn{(\mathcal{P})}
\def\Pe{(\mathcal{P}^{\eta})}
\def\Pee{(\mathcal{P}^{\varepsilon, \eta})}

\newtheorem{Theorem}{Theorem}[section]
\newtheorem{Conjecture}{Conjecture}[section]
\newtheorem{Example}{Example}[section]
\newtheorem{Lemma}{Lemma}[section]
\newtheorem{Remark}{Remark}[section]
\newtheorem{Definition}{Definition}[section]
\newtheorem{Corollary}{Corollary}[section]

\numberwithin{equation}{section}

\title[Nonlinear elliptic systems with linear growth]{On the existence of integrable solutions to\\ nonlinear elliptic systems and variational problems \\ with linear growth}

\author[L.~Beck]{Lisa Beck}
\address{Institut f\"{u}r Mathematik, Universit\"{a}t Augsburg\\
 Universit\"{a}tsstr. 14, 86159~Augsburg, Germany}
\email{lisa.beck@math.uni-augsburg.de}

\author[M.~Bul\'{i}\v{c}ek ]{Miroslav Bul\'i\v{c}ek}\thanks{The authors acknowledge the support of the ERC-CZ project LL1202, financed by M\v{S}MT}

\address{Mathematical Institute, Faculty of Mathematics and
Physics, Charles University \\ Sokolovsk\'{a}~83,
186~75~Prague~8, Czech~Republic}
\email{mbul8060@karlin.mff.cuni.cz}

\author[J. M\'{a}lek]{Josef M\'alek}
\address{Mathematical Institute, Faculty of Mathematics and
Physics, Charles University \\ Sokolovsk\'{a}~83,
186~75~Prague~8, Czech~Republic}
\email{malek@karlin.mff.cuni.cz}

\author[E. S\"{u}li]{Endre S\"{u}li}
\address{Mathematical Institute, University of Oxford\\ Andrew Wiles Building, Woodstock Rd., Oxford OX2 6GG, UK}
\email{endre.suli@maths.ox.ac.uk}

\keywords{Weak solution, minimizer, mixed boundary-value problem, existence, linear growth}
\subjclass[2000]{}

\begin{abstract}
We investigate the properties of certain elliptic systems leading, a~priori, to solutions that belong to the space of Radon measures. We show that if the problem is equipped with a so-called
asymptotic Uhlenbeck structure, then the solution can in fact be understood as a standard weak solution, with one proviso: analogously as in the case of minimal surface equations, the attainment of the boundary value is penalized by a measure supported on (a subset of) the boundary, which, for the class of problems under consideration here, is the part of the boundary where a Neumann boundary condition is imposed.
\end{abstract}

\maketitle

\section{Setting of the problem}\label{Sec1}

\subsection{Introduction} A challenging problem in mathematical analysis is to understand the behavior of solutions to systems of nonlinear partial differential equations, or of minimizers to associated variational problems, that exhibit linear growth of the minimized quantity with respect to the unknown. We focus in this paper on one such class and consider two types of problems. The first one is a nonlinear elliptic system consisting of~$N$ equations, considered on a bounded open set in $\mathbb{R}^d$, where the unknown solution~$\bu$ and its `flux'~$\bT$ are related in such a way that $\nabla \bu$ is a~priori bounded. The second type represents an interesting nonlinear problem in linearized elasticity, with the stress~$\bT$ and the displacement~$\bu$, considered as unknowns, assumed to be related in such a way that the linearized strain, $\beps(\bu)\coloneqq\frac12 (\nabla \bu + (\nabla \bu)^T)$, is a priori bounded. In the latter case the a priori bound controls merely the symmetric part of the gradient of the displacement, which makes the analysis of the associated boundary-value problem different from the one for systems of the first type. While the gradient of the unknown function in the first case (or the symmetric part of the displacement gradient in the second case) is bounded, the associated `flux'~$\bT$ can, a~priori, only be guaranteed to belong to the space of Radon measures. The aim of this paper is to show that if the problem is equipped with a so-called
\emph{asymptotic Uhlenbeck structure}, then the solution can in fact be understood as a standard weak solution, with one proviso: analogously as in the case of minimal surface equations, the attainment of the boundary value is penalized by a measure supported on (a subset of) the boundary, which, for the class of problems under consideration here, is the part of the boundary where a Neumann boundary condition is imposed. This result is formulated precisely in Section~\ref{Sec2} and is based on a novel tool that we call
\emph{renormalized regularity}. The objective of this introductory section is to formulate the problems under consideration and motivate the concept of solution by means of existing results from the literature concerning nonlinear elliptic problems with linear growth.

\subsection{Formulation of the problems} We first consider the following problem: for a bounded, connected, Lipschitz domain $\Omega \subset \mathbb{R}^d$, $d\ge 2$, with Dirichlet boundary $\Gamma_D$ and Neumann boundary $\Gamma_N$, which are {relatively open subsets of $\partial \Omega$} such that $\Gamma_{D} \cap \Gamma_{N}=\emptyset$ and $\overline{\Gamma_D \cup \Gamma_N}=\partial \Omega$, a given vector field $\bef:\Omega \to \mathbb{R}^N$, with $N\in \mathbb{N}$, a given $\bg:\Gamma_N \to \mathbb{R}^N$, a given boundary datum $\bu_0:\overline\Omega \to \mathbb{R}^N$, and a given \emph{bounded} mapping $\bD \colon \mathbb{R}^{d\times N} \to \mathbb{R}^{d\times N}$, we seek a couple $(\bu, \bT)$ such that $\bu:\overline{\Omega} \to \mathbb{R}^N$, $\bT:\overline{\Omega} \to \mathbb{R}^{d\times N}$ and
\begin{equation}\label{elastic-g}
\begin{aligned}
-\diver \bT &= \bef, \qquad \nabla \bu=\bD(\bT) &&\textrm{ in } \Omega,\\
\bu&=\bu_0 &&\textrm{ on } \Gamma_D,\\
\bT\bn &=\bg &&\textrm{ on } \Gamma_N,
\end{aligned}
\end{equation}
where $\bn$ denotes the unit outward normal vector on $\Gamma_N$. When $\Gamma_D=\emptyset$, $\bef$ and $\bg$ will be assumed to satisfy a standard compatibility condition (cf.~\eqref{D3} below).

Note that if~$\bD$ has an inverse $\bD^{-1}$ on the range of~$\bD$, which we shall always assume to be the case here, then we can rewrite the system~\eqref{elastic-g} in the following, more familiar, form (which however will not be used in what follows):
\begin{equation}\label{elastic-g2}
\begin{aligned}
-\diver \bD^{-1}(\nabla \bu) &= \bef \qquad &&\textrm{ in } \Omega, \\
\bu&=\bu_0 &&\textrm{ on } \Gamma_D,\\
\bD^{-1}(\nabla \bu)\bn &=\bg &&\textrm{ on } \Gamma_N.
\end{aligned}
\end{equation}

As a prototypical example one can consider
\begin{equation}\label{LSt2g}
\bD(\bT) = \frac{\bT}{(1+|\bT|^a)^{\frac{1}{a}}},\qquad a>0.
\end{equation}
In this case, $\bT= \bD^{-1}(\nabla \bu) = (1- |\nabla \bu|^a)^{-\frac{1}{a}} \nabla \bu$.

We adopt the following natural assumptions reflecting~\eqref{LSt2g}: there exist constants $C_0 \geq 0$ and $C_1,C_2>0$ such that, for all $\bT \in \mathbb{R}^{d\times N}$,
\begin{align}
\bD(\bT) \cdot \bT &\ge C_1|\bT| -C_0,\label{AE2-g}\\
|\bD(\bT)|&\le C_2. \label{AE1-g}
\end{align}
Furthermore, assuming that $\bD \colon \mathbb{R}^{d \times N} \rightarrow \mathbb{R}^{d \times N}$ is a $\mathcal{C}^1$ mapping we define the mapping $\cA \colon \mathbb{R}^{d\times N}\to \mathbb{R}^{d\times N}\times \mathbb{R}^{d\times N}$ as\begin{equation}\label{DefcA-g}
\cA(\bT)\coloneqq \frac{\partial \bD(\bT)}{\partial \bT}, \qquad \textrm{i.e.,} \qquad \cA_{i\nu j \mu}(\bT)\coloneqq \frac{\partial \bD_{i\nu}(\bT)}{\partial \bT_{j\mu}}
\end{equation}
for all $i,j=1,\ldots, d$ and $\nu,\mu=1,\ldots,N$, and we require that~$\bD$ is uniformly $h$-monotone, i.e., we assume that there exists a positive nonincreasing continuous function $h \colon \mathbb{R}_+ \to \mathbb{R}_+$ such that, for all $\bT,\bB \in \mathbb{R}^{d\times N}$, one has
\begin{equation}\label{A1-g}
h(|\bT|)|\bB|^2\le (\bB, \bB)_{\cA(\bT)}\coloneqq \sum_{i,j=1}^d \sum_{\nu,\mu=1}^N \cA_{i\nu j \mu }(\bT) \bB_{i\nu}\bB_{j\mu}\le \frac{C_2|\bB|^2}{1+|\bT|}.
\end{equation}
We note that, for the example~\eqref{LSt2g}, the condition~\eqref{A1-g} holds with $h(s)=(1+s^a)^{-1-\frac{1}{a}}$, $s\in \mathbb{R}_+$, $a>0$, and $C_2 = \max\{1,2^{1-\frac{1}{a}}\}$.

The class of problems~\eqref{elastic-g} with the structural assumption~\eqref{LSt2g} is not just of theoretical importance: it is closely related to
\emph{limiting strain models} in continuum mechanics, and this is in fact the second type of problem we are interested in. Its formulation can be given in the following way:
for a bounded, connected, Lipschitz domain $\Omega \subset \mathbb{R}^d$, $d\ge 2$, with Dirichlet boundary $\Gamma_D$ and Neumann boundary $\Gamma_N$, which are {relatively open subsets of $\partial \Omega$} such that $\Gamma_{D} \cap \Gamma_{N}=\emptyset$ and $\overline{\Gamma_D \cup \Gamma_N}=\partial \Omega$, a given vector field $\bef:\Omega \to \mathbb{R}^d$, a given $\bg:\Gamma_N \to \mathbb{R}^d$, a given boundary datum $\bu_0:\overline\Omega \to \mathbb{R}^d$, and a given \emph{bounded} mapping $\beps^* \colon \mathbb{R}^{d\times d}_{sym} \to \mathbb{R}^{d\times d}_{sym}$, we seek a couple $(\bu, \bT)$, the displacement and the Cauchy stress tensor, such that $\bu:\overline{\Omega} \to \mathbb{R}^d$, $\bT:\overline{\Omega} \to \mathbb{R}^{d\times d}_{sym}$, and
\begin{equation}\label{elastic}
\begin{aligned}
-\diver \bT &= \bef, \qquad \beps(\bu)=\beps^*(\bT) &&\textrm{ in } \Omega,\\
\bu&=\bu_0 &&\textrm{ on } \Gamma_D,\\
\bT\bn &=\bg &&\textrm{ on } \Gamma_N,
\end{aligned}
\end{equation}
where $\beps(\bu)$ is the linearized strain tensor, i.e., $\beps(\bu)\coloneqq\frac12 (\nabla \bu + (\nabla \bu)^T)$. A prototypical example of a limiting strain model is one in which the linearized strain tensor and the Cauchy stress are related by the formula
\[\beps(\bu) = \beps^\ast(\bT)\coloneqq\frac{\bT}{(1 + |\bT|^a)^{\frac{1}{a}}},\qquad a>0.\]

Problem~\eqref{elastic} is then an almost direct analogue of problem~\eqref{elastic-g} with $N=d$; the only aspect in which the latter model differs from~\eqref{elastic-g} (and is therefore considerably more difficult) is that, in contrast with~\eqref{elastic-g}, one is forced to operate in the space of symmetric matrices and function spaces of symmetric gradients. We refer the interested reader to \cite{KRR2003,ref30,KRRref33,BMRW2013,BMRS2014} for a detailed overview of limiting strain models, their theoretical justification stemming from implicit constitutive theory, a discussion of their importance in modeling the responses of materials near regions of stress-concentration, where $|\bT|$ is large, and their mathematical analysis (see in particular the survey paper~\cite{BMRS2014} for more details).

{Analogously to problem~\eqref{elastic-g}}, we adopt the following natural assumptions associated with limiting strain models (see~\cite{BMRS2014}): there exist constants $C_0 \geq 0$ and $C_1,C_2>0$ such that, for all $\bT \in \mathbb{R}^{d\times d}_{sym}$,
\begin{align}
\beps^*(\bT) \cdot \bT &\ge C_1|\bT| -C_0,\label{AE2}\\
|\beps^*(\bT)|&\le C_2. \label{AE1}
\end{align}
We also assume that $\beps^* \colon \mathbb{R}^{d\times d}_{sym} \to \mathbb{R}^{d\times d}_{sym}$ is a $\mathcal{C}^1$ mapping and define the mapping $\cA \colon \mathbb{R}^{d\times d}_{sym}\to \mathbb{R}^{d\times d}_{sym}\times \mathbb{R}^{d\times d}_{sym}$ as
\begin{equation}\label{DefcA}
\cA(\bT)\coloneqq \frac{\partial \beps^*(\bT)}{\partial \bT}, \qquad \textrm{i.e.,} \qquad \cA_{ijkl}(\bT)\coloneqq \frac{\partial \beps_{ij}^*(\bT)}{\partial \bT_{kl}}.
\end{equation}
Moreover, we require that~$\beps^*$ is uniformly $h$-monotone, i.e., we assume that there exists a positive nonincreasing continuous function $h \colon \mathbb{R}_+ \to \mathbb{R}_+$, such that, for all $\bT,\bB \in \mathbb{R}^{d\times d}_{sym}$, one has
\begin{equation}\label{A1}
h(|\bT|)|\bB|^2\le (\bB, \bB)_{\cA(\bT)}\coloneqq \sum_{i,j,k,l=1}^d \cA_{ijkl}(\bT) \bB_{ij}\bB_{kl}\le \frac{C_2|\bB|^2}{1+|\bT|}.
\end{equation}

As a precursor to our choice of the appropriate function spaces for the solution pair, we make the following observations: the system ~\eqref{elastic-g} yields boundedness of $|\nabla \bu|$ as a direct consequence of the boundedness of the mapping $\bT \mapsto |\bD(\bT)|$ (compare also with~\eqref{LSt2g}); analogously, the system~\eqref{elastic} yields boundedness of $|\beps(\bu)|$. On the other hand, {for both problems, our assumptions will only supply} an $L^1(\Omega)$ norm bound on the unknown~$\bT$. This can be viewed as a counterpart of the situation one faces with minimal surface type equations, corresponding to $\bD^{-1}$ rather than~$\bD$ itself being a bounded mapping. For example, one can consider the following counterpart of~\eqref{LSt2g}:
\begin{equation}\label{LSt2gcp}
\bT = \frac{\nabla \bu}{(1+|\nabla \bu|^a)^{\frac{1}{a}}},\qquad a>0,
\end{equation}
which, in tandem with $ - \diver \bT = \bef$, corresponds to the classical minimal surface equation if one sets $N=1$ and $a=2$. Similarly to the minimal surface equation, the natural function space for~$\bT$ in problem~\eqref{elastic-g} is not $L^1(\Omega)^{d \times N}$ but rather the space of Radon measures $\mathcal{M}$. The main purpose of this paper is to show that such an extension of the notion of solution to the space of Radon measures {\em is unnecessary}, provided that one equips problem~\eqref{elastic-g} with asymptotically symmetric and asymptotic Uhlenbeck structures. We postpone the definitions of \emph{asymptotically symmetric structure} and \emph{asymptotic Uhlenbeck structure} to Section~\ref{Sec2}, where the main results of this paper are precisely stated.

{The purpose of the remaining part of this section is to introduce a suitable concept of solution. To this end, we will assume for the moment that the fourth-order tensor $\mathcal{A}$ is symmetric, which then guarantees the existence of a potential~$F$ for the nonlinearity~$\bD$. This allows us to link our problem with an associated problem in the field of Calculus of Variations, where problems of this type have been studied for some time; summarizing the available existence results and counterexamples helps to motivate the concept of weak solution used in our approach (see Subsection~\ref{SubSec4}).}

\subsection{Notation} Before proceeding further, we introduce the relevant notational conventions that will be used throughout the paper. We shall use the standard notations $L^{p}(\Omega)$ and $W^{1,p}(\Omega)$ for Lebesgue spaces and Sobolev spaces, respectively. The space of Radon measures on a set $\Omega' \subseteq \overline{\Omega}$ (which need not necessarily be an open subset of $\Omega$) will be denoted by $\mathcal{M}(\Omega')$. In addition, we denote by $W^{1,p}_{\Gamma_D}(\Omega)$ the Sobolev space of functions having zero trace on $\Gamma_D$; in what follows we shall suppose that $\Gamma_D$ is sufficiently smooth so as to ensure that the following characterization holds for all $p \in [1,\infty)$:
\begin{equation}\label{zerotrace}
W^{1,p}_{\Gamma_D}(\Omega)=\overline{\left\{u\in \mathcal{C}^{\infty}(\overline{\Omega}) \colon \spt u \cap \overline{\Gamma_D}=\emptyset\right\}}^{\|\cdot\|_{1,p}}.
\end{equation}
In addition, to simplify the notational conventions, when $\Gamma_D = \emptyset$ it will be understood that
\begin{equation*}
W^{1,p}_{\Gamma_D}(\Omega)\coloneqq\left\{u\in W^{1,p}(\Omega) \colon \int_{\Omega} u \ddd x=0\right\},
\end{equation*}
and $\bu_0$ will then be {supposed} to be identically equal to $\mathbf{0}$ on $\overline \Omega$.

In order to distinguish scalar-, vector- and tensor-valued functions, we shall use italic letters for scalars (e.g., $u$), boldface letters for vectors (e.g., $\bu\coloneqq(\bu_1,\ldots, \bu_N)$), capital bold letters for $d\times N$ matrices (e.g., $\bB\coloneqq(\bB_{i\nu})$), and calligraphic letters for fourth-order tensors (e.g., $\cA\coloneqq(\cA)_{i\nu j \mu}$ with $i,j=1,\ldots, d$ and $\mu,\nu=1,\ldots, N$). Moreover, in what follows the sub- and superscripts written in italics will be understood to take the values $i=1,\ldots,d$, while the sub- and superscripts indicated in Greek letters take the values $\nu=1,\ldots,N$. We also use the following abbreviations for function spaces of vector- and tensor-valued functions:
$$
\begin{aligned}
X(\Omega)^d&\coloneqq\underset{d-\textrm{times}}{\underbrace{X(\Omega)\times \ldots \times X(\Omega)}},\\
X(\Omega)^{d\times N}&\coloneqq\underset{d\times N-\textrm{times}}{\underbrace{X(\Omega)\times \ldots \times X(\Omega)}}.
\end{aligned}
$$
In addition, we shall frequently use the symbol $\langle a, b\rangle\coloneqq\langle a, b\rangle_{X^*, X}$ for a dual pairing and will omit the subscript $_{X^*, X}$ whenever there is no ambiguity regarding the choice of the spaces $X$ and $X^*$. Finally, $\mathbb{R}_+$ will signify the set of all nonnegative real numbers.

\subsection{Assumptions on the data}
Concerning the vector function $\bef$ appearing on the right-hand side of~\eqref{elastic-g} we assume that
\begin{equation}
\tag{D1} \bef \in L^2(\Omega)^N. \label{D1}
\end{equation}
This condition can be relaxed: the square-integrability of $\bef$ is assumed here for the sake of simplicity only.

Next, we need to assume certain compatibility of the data. We require that
\begin{equation}
\tag{D2}\label{proto-G2}
\begin{aligned}
&\bu_0 \in W^{1,\infty}(\Omega)^N \mbox{ with $\nabla \bu_0(x)$ for a.e.~$x \in \overline\Omega$ contained in a compact set $K$ in $\mathbb{R}^{d\times N}$},\\
&\mbox{which is, in turn, contained in the interior of $\bD(\mathbb{R}^{d \times N})$.}
\end{aligned}
\end{equation}
This condition is trivially satisfied for each constant function $\bu_0$ (thanks to~\eqref{AE2-g} and Brouwer's fixed point theorem, cf.~the proof of Lemma~\ref{basici}); this is so in particular when $\Gamma_D=\emptyset$ (where, in line with the notational convention adopted following equation \eqref{zerotrace}, we have taken $\bu_0$ to be identically~$\mathbf{0}$ on $\overline{\Omega}$). Otherwise, when $\Gamma_D \neq \emptyset$, a sufficient condition for~\eqref{proto-G2} to be satisfied is the requirement

\begin{equation}
\label{D2_simple}
\begin{aligned}
\bu_0 \in W^{1,\infty}(\Omega)^N \text{ with } \|\nabla \bu_0\|_{\infty} < C_1.
\end{aligned}
\end{equation}
\begin{Remark}
It should be noted here that, because of hypothesis~\eqref{AE1-g}, for any plausible solution pair $(\bu, \bT)$ obeying $\nabla \bu = \bD(\bT)$, the function $\nabla \bu$ should necessarily satisfy $\|\nabla \bu\|_\infty \leq C_2$, where, due to~\eqref{AE2-g}, $C_2 \geq C_1$. As the range of~$\bD$ is potentially a strict subset of the closed ball of radius $C_2$ in $\mathbb{R}^{d\times N}$,
the condition $\|\nabla \bu_0\|_{\infty} \leq C_2$ on $\bu_0$ would not guarantee that $\nabla \bu_0(\bx) \in \bD(\mathbb{R}^{d\times N})$ for all $\bx \in \overline\Omega$; on the other hand,~\eqref{D2_simple} does imply that $\nabla \bu_0(\bx) \in \bD(\mathbb{R}^{d\times N})$ for all $\bx \in \overline\Omega$. In fact, for the prototypical case~\eqref{LSt2g}, condition~\eqref{D2_simple} is equivalent to~\eqref{proto-G2}. In any case,
we shall assume~\eqref{proto-G2} rather than, the generally stronger requirement,~\eqref{D2_simple}.
\end{Remark}

The condition~\eqref{proto-G2}, which excludes the possibility that $\nabla \bu_0(\bx)$ touches the boundary of the set $\bD(\mathbb{R}^{d \times N})$, may also be reformulated as the requirement
\begin{equation}
\label{G2_analytic}
\begin{aligned}
\bu_0 \in W^{1,\infty}(\Omega)^N \text{ with } 0<C_1&\le \liminf_{n\to \infty}\bigg(\essinf_{x\in \Omega} \inf_{\bT \in \mathbb{R}^{d\times N}; |\bT|=1}\left(\bD(n\bT)-\nabla \bu_0(x)\right)\cdot \bT\bigg),
\end{aligned}
\end{equation}
where, by selecting $C_1$ to be sufficiently small, we may use the same constant $C_1$ as in~\eqref{AE2-g}. This equivalence between~\eqref{proto-G2} and~\eqref{G2_analytic} is stated in Appendix~\ref{Saux} as Lemma~\ref{G2-equiv}. Let us note at this stage that such a condition is necessary in order to obtain the required a~priori bounds on~$\bT$ in $L^1(\Omega)^{d\times N}$, and it is quite natural e.g., in problems of plasticity. If~$\bu$ is interpreted as the displacement of an elastic body from its initial configuration, then~\eqref{proto-G2} can be seen to be a ``safety condition'' on the displacement gradient, which motivates us to call it \emph{safety strain condition}.

For the problem~\eqref{elastic}, because of the presence of the symmetric gradient, we adopt the following \emph{safety strain condition}: we require that
\begin{equation}
\tag{D2$^*$}\label{proto-G2st}
\begin{aligned}
& \bu_0 \in W^{1,1}(\Omega)^d \text{ with } \mbox{$\beps(\bu_0(x))$ for a.e.~$x \in \overline\Omega$ is contained in a compact set $K$ in $\mathbb{R}^{d\times d}$},\\
&\mbox{which is, in turn, contained in the interior of $\beps^*(\mathbb{R}^{d \times d})$.}
\end{aligned}
\end{equation}
Similarly as before (see, again, Lemma~\ref{G2-equiv}), this can be reformulated as the requirement
\begin{equation}
\label{G2st_analytic}
 \bu_0 \in W^{1,1}(\Omega)^d \text{ with } 0< C_1 \le \liminf_{n\to \infty}\bigg(\essinf_{x\in \Omega} \inf_{\bT \in \mathbb{R}^{d\times d}_{sym}; |\bT|=1}\left(\beps^*(n\bT)-\beps(\bu_0(x))\right)\cdot \bT\bigg).
\end{equation}

Furthermore, $\bg$ will be assumed to be integrable over $\Gamma_N$, and in the case when $\Gamma_D=\emptyset$ (and therefore $\Gamma_N=\partial \Omega$) we shall also assume the usual natural compatibility condition on the Neumann datum $\bg$ and the source term~$\bef$:
\begin{equation}
\tag{D3}\label{D3}
\begin{aligned}
\bg&\in L^1(\Gamma_N)^N,\\
\b0&=\int_{\Omega} \bef \ddd x + \int_{\partial \Omega}\bg \ddd S &&\textrm{ if } \Gamma_N=\partial \Omega.
\end{aligned}
\end{equation}

\subsection{On the concept of a solution and its connection to related problems in the Calculus of Variations} \label{SubSec4}
Having introduced the minimal assumptions on the data we can now focus on the appropriate definition of a solution to~\eqref{elastic-g}. Thus, in the rest of this subsection we shall always assume that~$\bD$ satisfies~\eqref{AE2-g}--\eqref{A1-g} and that the data satisfy~\eqref{D1}--\eqref{D3}. A first, apparently natural, possibility to define a solution is the following (we note here again that, by definition, we set $\bu_0 = \mathbf{0}$ when $\Gamma_D =\emptyset$).

\begin{Definition}\label{def1.1}
Let~$\bD$ satisfy~\eqref{AE2-g}--\eqref{A1-g} and let the data satisfy~\eqref{D1}--\eqref{D3}. We say that a couple $(\bu,\bT)$ is a \emph{weak solution} to~\eqref{elastic-g} if $\bu-\bu_0 \in W^{1,\infty}_{\Gamma_D}(\Omega)^N$, $\bT \in L^1(\Omega)^{d\times N}$ and
\begin{equation}\label{TT1*-G}
\begin{split}
\int_{\Omega}\bT\cdot \nabla \bw\ddd x &= \int_{\Omega} \bef \cdot \bw \ddd x +\int_{\Gamma_N}\bg\cdot \bw \ddd S \qquad \textrm{ for all } \bw \in W^{1,\infty}_{\Gamma_D}(\Omega)^N,\\
\nabla \bu &=\bD(\bT) \qquad \textrm{ in } \Omega.
\end{split}
\end{equation}
\end{Definition}

Unfortunately, such a definition is too restrictive and in general the solution in this sense may not exist even if the data are arbitrarily smooth and small. Indeed, it was shown in~\cite{BMRW2013} that in certain cases (namely if $d=N=2$) one can directly link the problem~\eqref{elastic-g},~\eqref{LSt2g} (by reformulating a geometrically special subproblem in terms of the Airy stress function) to its counterpart~\eqref{elastic-g2},~\eqref{LSt2gcp}, where for the special choice $a=2$ one obtains the minimal surface equation, which is in general unsolvable in nonconvex planar domains ($d=2$). Moreover, the same holds true for general $a\neq 2$, which was also observed in the series of papers \cite{BiFu99,BiFu02,BiFu02b,ApBiFu10}. On the other hand, it was pointed out in~\cite{BiFu02b} and~\cite{ApBiFu10} that restricting oneself to the case of $a \in (0,2/d)$ (with the upper bound $a=2/d$ included for the planar case $d=2$), which would, in a certain sense, correspond to the assumption that the function $h$ from~\eqref{A1-g} satisfies
\begin{equation}
h(|\bT|)\ge \frac{C_1}{1+|\bT|^q} \qquad \text{for some } q < 1+\frac{2}{d} \text{ (or } q \leq 2 \text{ for } d=2\text{)},
\label{BiFu}
\end{equation}
one may observe (when~\eqref{LSt2gcp} holds) that the failure of solvability of~\eqref{TT1*-G} is only due to the presence of the boundary part $\Gamma_D$. Then, by a proper redefinition of the notion of a solution, namely by allowing the nonattainment of $\bu_0$ on $\Gamma_D$, one can still formulate a satisfactory definition of a solution. We note in passing that an analogous situation occurs when one considers the counterpart of~\eqref{LSt2g} exemplified by~\eqref{LSt2gcp}. Thus, in the rest of this section we shall make a link to the available results when this `opposite' extreme behavior to~\eqref{LSt2g} is considered and we attempt to introduce a notion of solution that is more appropriate than~\eqref{TT1*-G}, and which, nevertheless, encapsulates the relevant information. To do so, we restrict ourselves for the moment to the potential case; this allows us to look at our problem by means of tools from the Calculus of Variations where problems with linear growth have been studied for some time. Motivated by the available results we introduce a concept of solution to our problems. We emphasize however that our results, stated precisely in Section~\ref{Sec2}, are not proved by techniques from the Calculus of Variations; in fact, PDE methods will be used. We also rely on the symmetry of the considered structure in a much weaker sense when proving our main results than in the existing literature. It is noteworthy that the results presented in Section~\ref{Sec2} seem to be the first ones of this kind, where one benefits from the (asymptotic) Uhlenbeck structure for the symmetric gradient.

In the rest of this subsection, following the goal to motivate the concept of solution, we assume temporarily that~$\cA$ is symmetric, i.e.,
$$
\cA_{i\nu j \mu}(\bT)=\cA_{j\mu i\nu }(\bT) \textrm{ for all } \bT \in \mathbb{R}^{d\times N} \textrm{ and all } i,j=1,\ldots, d \textrm{ and } \nu,\mu=1,\ldots, N.
$$
If this is the case, one can define a potential $F \colon \mathbb{R}^{d\times N}\to \mathbb{R}$ as
\begin{equation}
F(\bT)\coloneqq\int_0^1 \bD(t\bT)\cdot \bT\ddd t ,\label{potende}
\end{equation}
and by the symmetry of~$\cA$ (cf. Lemma~\ref{basici} in Appendix~\ref{Saux}) it then follows that
\begin{equation}
\frac{\partial F(\bT)}{\partial \bT_{i\nu}}=\bD_{i \nu }(\bT). \label{derpot}
\end{equation}
In addition, since~$\cA$ is $h$-monotone, the function~$F$ is strictly convex (see Lemma~\ref{basici}), and one is directly led to the variational formulation of~\eqref{elastic-g}. We note here that we have in principle two options: the primal formulation and the corresponding dual formulation. To this end, we also introduce by the usual formula the (convex) conjugate function~$F^*$ of~$F$:
\begin{equation}\label{conjug}
F^*(\bB)\coloneqq\sup_{\bT \in \mathbb{R}^{d\times N}} \{ \bB \cdot \bT - F(\bT) \}.
\end{equation}
It then follows from this definition that (see Lemma~\ref{basici})
\begin{equation}
\begin{aligned}
F^*(\bB)&=\infty &&\textrm{if } \bB \notin \overline{\bD(\mathbb{R}^{d\times N})},\\
F^*(\bB)&= \bB \cdot \bD^{-1}(\bB)- F(\bD^{-1}(\bB)) &&\textrm{if } \bB \in \bD(\mathbb{R}^{d\times N}).
\end{aligned}\label{propFstar}
\end{equation}
Note that the value of $F^*(\bB)$ can be finite or infinite for $\bB \in \partial \bD(\mathbb{R}^{d\times N})$ depending on the structure of~$F$. Moreover, by differentiating the expression in the second line of~\eqref{propFstar} we have that
\begin{equation}\label{derpot2}
\frac{\partial F^*(\bB)}{\partial \bB}=\bD^{-1}(\bB) \qquad \textrm{if } \bB \in \bD(\mathbb{R}^{d\times N}).
\end{equation}
We refer the reader to Lemma~\ref{basici} and its proof given in Appendix~\ref{Saux} for the above relations. Having introduced the potential~$F$, we are directly led to the definition of a solution to~\eqref{elastic-g} in terms of minimizers of a variational problem. Let us define to this end the admissible class of tensor functions~$\bT$ as
$$
\mathcal{S}\coloneqq\left\{\bT\in L^1(\Omega)^{d\times N} \colon \int_{\Omega} \bT \cdot \nabla \bw -\bef\cdot \bw\ddd x =\int_{\Gamma_{N}}\bg \cdot \bw\ddd S \textrm{ for all }\bw \in W^{1,\infty}_{\Gamma_D}(\Omega)^N \right\}
$$
and the admissible class of vector functions~$\bu$ as
$$
\mathcal{S}^*\coloneqq\big\{\bu \in W^{1,\infty}(\Omega)^N \colon \bu - \bu_0 \in W^{1,\infty}_{\Gamma_D}(\Omega)^N \big\}.
$$
We can then introduce the following two variational problems.

\smallskip

\noindent
{\bf Primal problem:} Find $\bu \in \mathcal{S}^*$ such that, for all $\bv \in \mathcal{S}^*$,
\begin{equation}\label{minimiz-prim}
J^*(\bu)\coloneqq\int_{\Omega} F^*(\nabla \bu)- \bef \cdot \bu \ddd x -\int_{\Gamma_N} \bg \cdot \bu\ddd S\le \int_{\Omega} F^*(\nabla \bv)- \bef \cdot \bv \ddd x -\int_{\Gamma_N} \bg \cdot \bv\ddd S.
\end{equation}

\smallskip

\noindent
{\bf Dual problem:} Find $\bT \in \mathcal{S}$ such that, for all $\bW \in \mathcal{S}$,
\begin{equation}\label{minimiz}
J(\bT)\coloneqq\int_{\Omega} F(\bT)-\nabla \bu_0 \cdot \bT \ddd x \le \int_{\Omega} F(\bW)-\nabla \bu_0 \cdot \bW \ddd x.
\end{equation}

\smallskip

\noindent
Moreover, we can rewrite the definition of a weak solution to~\eqref{elastic-g} (in the sense of Definition~\ref{def1.1} above) as follows.

\smallskip

\noindent
{\bf Weak solution:} Find $(\bu, \bT) \in \mathcal{S}^*\times \mathcal{S}$ such that
\begin{equation}\label{defwekznovu}
\nabla \bu = \bD(\bT) \qquad \textrm{in } \Omega.
\end{equation}

Next, we make links between these various formulations and we also discuss the main difficulties. To do so, we state the following three lemmas, whose proofs are based on standard techniques from convex analysis and can be found in Appendix~\ref{app_minimizers_ws}.

\begin{Lemma}\label{AL1}
Let $\Omega \subset \mathbb{R}^d$ be a bounded Lipschitz domain, let the mapping~$\bD$ satisfy~\eqref{AE2-g}--\eqref{A1-g} with symmetric~$\cA$ and let the data satisfy~\eqref{D1}--\eqref{D3}. Assume that $(\bu,\bT)$ is a weak solution to~\eqref{TT1*-G}. Then,~$\bu$ solves~the primal problem~\eqref{minimiz-prim} and~$\bT$ solves the dual problem. Moreover, if a weak solution exists, then it is unique.
\end{Lemma}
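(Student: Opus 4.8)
The plan is to exploit the convex-analytic relationship between $F$ and $F^*$, together with the Euler--Lagrange equations of the two variational problems, which is exactly how the three formulations are tied together. First I would record the Fenchel--Young (in)equality: for all $\bB \in \bD(\mathbb{R}^{d\times N})$ and all $\bT \in \mathbb{R}^{d\times N}$ one has $F(\bT) + F^*(\bB) \ge \bB\cdot\bT$, with equality if and only if $\bB = \bD(\bT)$ (equivalently $\bT = \bD^{-1}(\bB)$), which follows from \eqref{propFstar}, \eqref{derpot}, \eqref{derpot2} and the strict convexity of $F$ from Lemma~\ref{basici}. This is the single identity that drives everything.

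Next I would show that a weak solution $(\bu,\bT)$ yields a minimizer of each problem. Given $\bv \in \mathcal{S}^*$, write $\bw \coloneqq \bv - \bu \in W^{1,\infty}_{\Gamma_D}(\Omega)^N$, which is an admissible test function in the weak formulation \eqref{TT1*-G}. Using Fenchel--Young with equality for $\nabla\bu = \bD(\bT)$ and the inequality for $\nabla\bv$,
\begin{equation*}
F^*(\nabla\bv) - F^*(\nabla\bu) \ge \bT\cdot(\nabla\bv - \nabla\bu) = \bT\cdot\nabla\bw;
\end{equation*}
integrating over $\Omega$ and substituting the weak formulation $\int_\Omega \bT\cdot\nabla\bw = \int_\Omega \bef\cdot\bw + \int_{\Gamma_N}\bg\cdot\bw$ gives precisely $J^*(\bv) \ge J^*(\bu)$, so $\bu$ solves \eqref{minimiz-prim}. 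For the dual problem the argument is symmetric: take $\bW \in \mathcal{S}$, so $\bW - \bT$ is divergence-free with zero Neumann data against $W^{1,\infty}_{\Gamma_D}$ test functions; Fenchel--Young gives $F(\bW) - F(\bT) \ge \bD(\bT)\cdot(\bW - \bT) = \nabla\bu\cdot(\bW - \bT)$, and because $\bu - \bu_0 \in W^{1,\infty}_{\Gamma_D}(\Omega)^N$ the difference $\nabla\bu - \nabla\bu_0$ pairs to zero with $\bW - \bT \in \mathcal{S} - \mathcal{S}$ (each element of $\mathcal{S}-\mathcal{S}$ satisfies the homogeneous constraint); integrating yields $J(\bW) \ge J(\bT)$.

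For uniqueness of the weak solution I would argue as follows. If $(\bu_1,\bT_1)$ and $(\bu_2,\bT_2)$ are both weak solutions, subtract the two weak formulations and test with $\bw = \bu_1 - \bu_2$ (which lies in $W^{1,\infty}_{\Gamma_D}(\Omega)^N$ since both $\bu_j - \bu_0$ do), obtaining $\int_\Omega (\bT_1 - \bT_2)\cdot(\nabla\bu_1 - \nabla\bu_2)\ddd x = 0$. Since $\nabla\bu_j = \bD(\bT_j)$, strict monotonicity of $\bD$ (a consequence of \eqref{A1-g}: $(\bD(\bT_1)-\bD(\bT_2))\cdot(\bT_1-\bT_2) = \int_0^1 (\bT_1-\bT_2,\bT_1-\bT_2)_{\cA(\bT_2 + t(\bT_1-\bT_2))}\ddd t \ge 0$ with equality only if $\bT_1 = \bT_2$) forces $\bT_1 = \bT_2$ a.e., hence $\nabla\bu_1 = \nabla\bu_2$ a.e.; connectedness of $\Omega$ and the zero trace on $\Gamma_D$ (or the zero-mean normalization when $\Gamma_D = \emptyset$) then give $\bu_1 = \bu_2$.

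The main obstacle I anticipate is purely bookkeeping rather than conceptual: one must be careful that $F^*(\nabla\bv)$ is finite and the Fenchel--Young equality case is applicable — this is where \eqref{proto-G2} (the safety strain condition) is used, guaranteeing $\nabla\bu(x) = \bD(\bT(x))$ lies in the \emph{interior} of $\bD(\mathbb{R}^{d\times N})$ up to the compact set $K$, so that $F^*$ is differentiable there with derivative $\bD^{-1}$, and that the pairings $\int_{\Gamma_N}\bg\cdot\bw\ddd S$ make sense for $\bg \in L^1(\Gamma_N)^N$ against $\bw \in W^{1,\infty}$. The compatibility condition \eqref{D3} is what makes the linear functionals well-defined (independent of additive constants) in the case $\Gamma_D = \emptyset$. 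None of this is deep, but it is where the hypotheses \eqref{D1}--\eqref{D3} and the symmetry of $\cA$ genuinely enter, so I would state these checks explicitly and then refer to Lemma~\ref{basici} for the convex-analytic facts about $F$ and $F^*$.
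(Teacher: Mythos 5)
Your argument is correct and coincides with the paper's own proof: uniqueness follows from the strict monotonicity inequality~\eqref{subst2} after testing with $\bw=\bu_1-\bu_2$, and minimality for both problems follows from the first-order convexity characterizations of $F$ and $F^*$ via~\eqref{derpot} and~\eqref{derpot2}, combined with the weak formulation and the membership $\bu-\bu_0\in W^{1,\infty}_{\Gamma_D}(\Omega)^N$, your Fenchel--Young packaging being an equivalent restatement of exactly these ingredients. The only slip is in your closing discussion: the finiteness of $\int_\Omega F^*(\nabla\bu)\ddd x$ (needed so that the primal comparison is meaningful) does not come from the safety strain condition~\eqref{proto-G2} but simply from the fact that $\bT\in L^1(\Omega)^{d\times N}$ is finite a.e., so $\nabla\bu=\bD(\bT)\in\bD(\mathbb{R}^{d\times N})$ a.e., whence~\eqref{propFstar} and~\eqref{AE1-g} give $F^*(\nabla\bu)=\bD(\bT)\cdot\bT-F(\bT)\in L^1(\Omega)$.
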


\begin{Lemma} \label{AL3}
Let $\Omega \subset \mathbb{R}^d$ be a bounded Lipschitz domain, let the mapping~$\bD$ satisfy~\eqref{AE2-g}--\eqref{A1-g} with symmetric~$\cA$ and let the data satisfy~\eqref{D1}--\eqref{D3}; then, the infimum of $J(\bT)$ over $\bT \in \mathcal{S}$ is finite. Moreover, if the infimum is attained for some $\bT\in \mathcal{S}$, then there exists a function $\bu \in W^{1,\infty}(\Omega)^N$ such that $\bu-\bu_0\in W^{1,\infty}_{\Gamma_D}(\Omega)^N$ and $\nabla \bu = \bD(\bT)$ in $\Omega$, and, consequently, the couple $(\bu,\bT)$ is a unique weak solution.
\end{Lemma}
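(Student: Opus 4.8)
The plan is to argue in two stages: first establish that $\inf_{\bT \in \mathcal{S}} J(\bT)$ is finite, and then show that attainment of the infimum forces the Euler--Lagrange relation $\nabla \bu = \bD(\bT)$ to hold, after which the conclusion follows from Lemma~\ref{AL1}. For the finiteness, I would first check that $\mathcal{S} \neq \emptyset$: since $\bef \in L^2(\Omega)^N$ and $\bg \in L^1(\Gamma_N)^N$ satisfy the compatibility condition~\eqref{D3}, the linear functional $\bw \mapsto \int_\Omega \bef \cdot \bw \ddd x + \int_{\Gamma_N} \bg \cdot \bw \ddd S$ is well-defined and vanishes on constants when $\Gamma_D = \emptyset$, so a standard argument (solve $-\diver \bT = \bef$ with $\bT\bn = \bg$, e.g.\ take $\bT = \nabla \bz$ for a suitable $\bz \in W^{1,2}$) produces at least one $\bT \in \mathcal{S}$, which lies in $L^1(\Omega)^{d\times N}$ and indeed in $L^2$. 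For the lower bound on $J$, I would use convexity of $F$ together with the coercivity estimate implied by~\eqref{AE2-g}: from $F(\bT) = \int_0^1 \bD(t\bT)\cdot \bT \ddd t$ and~\eqref{AE2-g} one gets $F(\bT) \geq C_1 |\bT|/2 - C_0$ (integrating $\bD(t\bT)\cdot(t\bT) \geq C_1 t|\bT| - C_0$ in $t$ and dividing by $t$), hence
\begin{equation*}
J(\bT) \geq \frac{C_1}{2}\|\bT\|_{L^1(\Omega)} - C_0|\Omega| - \|\nabla \bu_0\|_{L^\infty(\Omega)}\|\bT\|_{L^1(\Omega)}.
\end{equation*}
By the safety strain condition in the form~\eqref{G2_analytic}, the coefficient of $\|\bT\|_{L^1}$ is effectively positive once one refines this estimate (using that $\nabla \bu_0$ stays strictly inside $\bD(\mathbb{R}^{d\times N})$, so that $(\bD(t\bT) - \nabla\bu_0)\cdot\bT \geq c|\bT|$ for large $t$); this gives $J(\bT) \geq c\|\bT\|_{L^1} - C$, so $J$ is bounded below and, evaluated at the element of $\mathcal{S}$ found above, is finite, hence the infimum is finite.

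For the second stage, suppose the infimum is attained at $\bT \in \mathcal{S}$. Since $\mathcal{S}$ is an affine subspace of $L^1(\Omega)^{d\times N}$ — if $\bT_1, \bT_2 \in \mathcal{S}$ then $\bT_1 - \bT_2$ is divergence-free with zero normal trace on $\Gamma_N$ in the weak sense tested against $W^{1,\infty}_{\Gamma_D}(\Omega)^N$ — the minimizer $\bT$ satisfies the first-order optimality condition along every admissible direction $\bPhi := \bT_1 - \bT$. Here $F$ is $\mathcal{C}^1$ with $\partial F/\partial \bT = \bD(\bT)$ by~\eqref{derpot}, and $\bD$ is bounded by~\eqref{AE1-g}, so differentiating $t \mapsto J(\bT + t\bPhi)$ at $t=0$ (the difference quotients are dominated using $|\bD| \le C_2$ and $\nabla\bu_0 \in L^\infty$, so dominated convergence applies) yields
\begin{equation*}
\int_\Omega \big(\bD(\bT) - \nabla \bu_0\big) \cdot \bPhi \ddd x = 0 \qquad \text{for all } \bPhi \in \mathcal{S} - \mathcal{S}.
\end{equation*}
Because $\mathcal{S} - \mathcal{S}$ consists exactly of the divergence-free fields annihilating gradients of $W^{1,\infty}_{\Gamma_D}$ functions, and since $\bD(\bT) - \nabla\bu_0 \in L^\infty(\Omega)^{d\times N}$, a duality/orthogonality argument (the annihilator of that space is, by definition or by a de~Rham-type argument on the Lipschitz domain with mixed boundary data, precisely the set of gradients $\{\nabla \bv : \bv \in W^{1,\infty}_{\Gamma_D}(\Omega)^N\}$ plus $\nabla \bu_0$) shows there exists $\bv \in W^{1,\infty}_{\Gamma_D}(\Omega)^N$ with $\bD(\bT) - \nabla\bu_0 = \nabla \bv$, i.e.\ $\bu := \bu_0 + \bv$ satisfies $\bu - \bu_0 \in W^{1,\infty}_{\Gamma_D}(\Omega)^N$ and $\nabla\bu = \bD(\bT)$ in $\Omega$. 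Thus $(\bu,\bT)$ is a weak solution in the sense of Definition~\ref{def1.1}, and uniqueness is exactly the last assertion of Lemma~\ref{AL1}.

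The main obstacle I anticipate is the orthogonality/duality step identifying the annihilator of $\mathcal{S} - \mathcal{S}$ with gradients of $W^{1,\infty}_{\Gamma_D}$ functions: one must show that an $L^\infty$ field which is $L^1$-orthogonal to all divergence-free fields with vanishing normal trace on $\Gamma_N$ is necessarily a gradient of a Lipschitz function vanishing on $\Gamma_D$. For a Lipschitz domain with a genuine mixed boundary decomposition this is a standard but slightly delicate de~Rham / Nečas-type result; the regularity hypothesis on $\Gamma_D$ encoded in~\eqref{zerotrace} is presumably what makes it go through. A secondary (purely technical) point is justifying the differentiation under the integral sign and the sharp coercivity constant, both of which are routine given~\eqref{AE1-g}, \eqref{AE2-g}, and~\eqref{G2_analytic}. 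Everything else — finiteness of $J$, nonemptiness of $\mathcal{S}$, and the final invocation of Lemma~\ref{AL1} — is straightforward convex-analysis bookkeeping.
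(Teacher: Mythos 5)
Your handling of the finiteness part follows the paper's route: establish that $\mathcal{S}$ is nonempty, and combine the Fubini representation of $F$ with the safety strain condition~\eqref{G2_analytic} to extract coercivity $J(\bT) \ge \tfrac{C_1}{2}\|\bT\|_1 - C$. One detail needs repair in the nonemptiness step, though: because $\bg$ is only in $L^1(\Gamma_N)^N$, the right-hand-side functional is not continuous on $W^{1,2}_{\Gamma_D}(\Omega)^N$ when $d\ge 3$ (the trace of an $H^1$ function need not be bounded on $\partial\Omega$), so solving a Laplace equation in $W^{1,2}$ does not produce an element of $\mathcal{S}$. The paper instead solves the $d$-Laplacian, exploiting $W^{1,d+1}(\Omega)\hookrightarrow \mathcal{C}(\overline{\Omega})$ so that the pairing with $L^1$ data is well defined, and then $\bT = |\nabla\bv|^{d-1}\nabla\bv$ genuinely lies in $\mathcal{S}$.

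For the attainment part you propose a genuinely different route, and the step you yourself flag as ``the main obstacle'' is precisely where your proposal falls short of a proof. After the Euler--Lagrange identity $\int_\Omega (\bD(\bT)-\nabla\bu_0)\cdot\tilde{\bW}\ddd x = 0$ for all $\tilde{\bW}\in\mathcal{S}-\mathcal{S}$, you want a de~Rham-type duality to conclude $\bD(\bT)-\nabla\bu_0=\nabla\bv$ for some $\bv\in W^{1,\infty}_{\Gamma_D}(\Omega)^N$. What is actually required is that the subspace $V := \{\nabla\bw : \bw\in W^{1,\infty}_{\Gamma_D}(\Omega)^N\}$ be weak-$*$ closed in $L^\infty(\Omega)^{d\times N}$: by the bipolar theorem in the $\langle L^\infty,L^1\rangle$ duality, the biannihilator of $V$ is its weak-$*$ closure, and you have shown exactly that $\bD(\bT)-\nabla\bu_0$ lies in that biannihilator. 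The closedness is true --- reduce by Krein--\v{S}mulian to a ball, where Poincar\'e and Arzel\`a--Ascoli give a subsequence with $\bw_n\to\bw$ uniformly, $\nabla\bw_n\rightharpoonup^*\nabla\bw$, and the constraint $\bw|_{\Gamma_D}=0$ passes to the uniform limit --- but none of this appears in your sketch, and it is not a routine citation.

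The paper avoids the $L^1/L^\infty$ duality entirely with a direct construction: having the Euler--Lagrange relation~\eqref{ELmin}, it defines $\bu$ as the Lax--Milgram solution of the auxiliary linear problem $\bu-\bu_0\in W^{1,2}_{\Gamma_D}(\Omega)^N$, $\int_\Omega\nabla\bu\cdot\nabla\bw\ddd x = \int_\Omega\bD(\bT)\cdot\nabla\bw\ddd x$ for all $\bw\in W^{1,2}_{\Gamma_D}(\Omega)^N$, which is solvable since $\bD(\bT)\in L^\infty$. The field $\tilde{\bW}:=\nabla\bu-\bD(\bT)$ is then admissible in~\eqref{ELmin} precisely by construction, and inserting it, together with taking $\bw=\bu-\bu_0$ in the auxiliary problem and summing, yields $\|\bD(\bT)-\nabla\bu\|_2^2=0$ after an algebraic cancellation. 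This buys a short, elementary argument with no functional-analytic delicacy, at the cost of introducing one auxiliary linear elliptic problem. Once $\nabla\bu=\bD(\bT)$ is in hand, the uniqueness claim is indeed, as you say, Lemma~\ref{AL1}.
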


\begin{Lemma}\label{AL2}
Let $\Omega \subset \mathbb{R}^d$ be a bounded Lipschitz domain, let the mapping~$\bD$ satisfy~\eqref{AE2-g}--\eqref{A1-g} with symmetric~$\cA$ and let the data satisfy~\eqref{D1}--\eqref{D3}; then, there exists a unique minimizer~$\bu$ to~the primal problem. Moreover, there exists a unique $\bT\in L^1(\Omega)^{d\times N}$ such that $\nabla \bu = \bD(\bT)$ in $\Omega$ and
\begin{equation}\label{nerovnost-g}
\int_{\Omega} \bT \cdot \nabla (\bu-\bv)\ddd x \le \int_{\Omega} \bef\cdot (\bu-\bv)\ddd x + \int_{\Gamma_{N}} \bg \cdot (\bu-\bv)\ddd S
\end{equation}
for all $\bv \in W^{1,\infty}(\Omega)^N$ such that $\bv -\bu_0 \in W^{1,\infty}_{\Gamma_D}(\Omega)^N$ and $\bD^{-1}(\nabla \bv) \in L^1(\Omega)^{d\times N}$. Furthermore, if for each $\bw \in W^{1,\infty}_{\Gamma_D}(\Omega)^N$ there exists a sequence $(\bw^n)_{n \in \mathbb N}$ in $W^{1,\infty}_{\Gamma_D}(\Omega)^N$ such that
\begin{equation}\label{novyas}
\spt \nabla \bw^n \subset \{x\in \Omega \colon |\bT(x)|\le n\} \quad \text{and} \quad \bw^n \rightharpoonup^* \bw \textrm{ weakly-$*$ in }W^{1,\infty}_{\Gamma_D}(\Omega)^N,
\end{equation}
then $(\bu,\bT)$ is a weak solution.
\end{Lemma}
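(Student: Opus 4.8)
The plan is to establish the three assertions in turn, relying only on convex duality together with the structural hypotheses \eqref{AE2-g}--\eqref{A1-g} and the safety strain condition \eqref{proto-G2}.

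\textbf{Step 1 (the primal minimiser).} I would run the direct method in $\mathcal{S}^*$. First observe that $F^*$ is convex, lower semicontinuous and, by \eqref{conjug}, nonnegative ($F^*(\bB)\ge\bB\cdot\b0-F(\b0)=0$), while by \eqref{propFstar} it equals $+\infty$ outside $\overline{\bD(\mathbb{R}^{d\times N})}$; hence every $\bv\in\mathcal{S}^*$ with $J^*(\bv)<\infty$ satisfies $\nabla\bv(x)\in\overline{\bD(\mathbb{R}^{d\times N})}$ a.e., so $\|\nabla\bv\|_\infty\le C_2$ by \eqref{AE1-g}, and the Poincar\'{e} inequality (using the zero trace on $\Gamma_D$, respectively the zero-mean normalisation and the compatibility condition \eqref{D3} when $\Gamma_D=\emptyset$) then bounds $\|\bv\|_{W^{1,\infty}}$ in terms of the data alone. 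Since $\bu_0$ lies in this class with finite energy by \eqref{proto-G2}, the primal problem has a finite infimum; taking a minimising sequence, I would extract a weakly-$*$ convergent subsequence in $W^{1,\infty}(\Omega)^N$ (whose limit still lies in $\mathcal{S}^*$, the constraint being stable under weak-$*$ convergence), pass the two linear terms to the limit via the compact embedding $W^{1,\infty}(\Omega)\hookrightarrow C(\overline\Omega)$ (which also yields convergence of traces), and use the sequential weak-$*$ lower semicontinuity of $\bv\mapsto\int_\Omega F^*(\nabla\bv)\ddd x$ (valid since $F^*$ is convex) to conclude that the limit $\bu$ is a minimiser. For uniqueness, note that $\bD\in\mathcal{C}^1$ makes $F$ of class $\mathcal{C}^1$, whence $F^*$ is strictly convex on $\overline{\bD(\mathbb{R}^{d\times N})}$ (Lemma~\ref{basici}); if $\bu_1,\bu_2$ were two minimisers then so would $\tfrac12(\bu_1+\bu_2)$ be, which forces $\nabla\bu_1=\nabla\bu_2$ a.e.\ and hence $\bu_1=\bu_2$.

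\textbf{Step 2 (construction of $\bT$ and the inequality \eqref{nerovnost-g}).} For $\lambda\in(0,1]$ I would introduce $\bu_\lambda\coloneqq(1-\lambda)\bu+\lambda\bu_0\in\mathcal{S}^*$. Because $\overline{\bD(\mathbb{R}^{d\times N})}$ is convex and, by \eqref{proto-G2}, $\nabla\bu_0(x)$ stays in a fixed compact subset of the interior of $\bD(\mathbb{R}^{d\times N})$, the values $\nabla\bu_\lambda(x)$ lie, uniformly in $x$, in a compact subset of $\bD(\mathbb{R}^{d\times N})$; thus $\bT_\lambda\coloneqq\bD^{-1}(\nabla\bu_\lambda)\in L^\infty(\Omega)^{d\times N}$ is well defined and, by \eqref{derpot2}, equals $\nabla F^*(\nabla\bu_\lambda)$. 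Integrating the subgradient inequality $F^*(\nabla\bu)\ge F^*(\nabla\bu_\lambda)+\bT_\lambda\cdot(\nabla\bu-\nabla\bu_\lambda)$ over $\Omega$ and combining it with the minimality $J^*(\bu)\le J^*(\bu_\lambda)$ and with $\nabla\bu-\nabla\bu_\lambda=\lambda(\nabla\bu-\nabla\bu_0)$, $\bu-\bu_\lambda=\lambda(\bu-\bu_0)$, I would derive a bound $\int_\Omega\bT_\lambda\cdot(\nabla\bu_\lambda-\nabla\bu_0)\ddd x\le C$ with $C$ independent of $\lambda$. Rewriting the integrand as $\bigl(\bD(\bT_\lambda)-\nabla\bu_0\bigr)\cdot\bT_\lambda$ and using \eqref{AE2-g} together with the safety strain condition in the form \eqref{G2_analytic} --- which yields a pointwise estimate $\bigl(\bD(\bS)-\nabla\bu_0(x)\bigr)\cdot\bS\ge\tfrac{C_1}{2}|\bS|-C'$ for all $\bS\in\mathbb{R}^{d\times N}$ and a.e.~$x$ --- this becomes a bound on $\|\bT_\lambda\|_{L^1(\Omega)}$ uniform in $\lambda$. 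Now $\bu_\lambda\to\bu$ in $W^{1,\infty}(\Omega)^N$ as $\lambda\to0$, and $|\bD^{-1}(\bB_k)|\to\infty$ whenever $\bB_k\in\bD(\mathbb{R}^{d\times N})$ converges to a point of $\overline{\bD(\mathbb{R}^{d\times N})}\setminus\bD(\mathbb{R}^{d\times N})$; Fatou's lemma therefore forces $|\{x\in\Omega\colon\nabla\bu(x)\notin\bD(\mathbb{R}^{d\times N})\}|=0$ (otherwise the uniform $L^1$ bound would fail), so $\bT\coloneqq\bD^{-1}(\nabla\bu)$ is defined a.e., satisfies $\nabla\bu=\bD(\bT)$ in $\Omega$, lies in $L^1(\Omega)^{d\times N}$ by Fatou, and $\bT_\lambda\to\bT$ a.e.; uniqueness of such a $\bT$ is immediate from the injectivity of $\bD$. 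Finally, given $\bv$ as in the hypotheses of \eqref{nerovnost-g}, one has $\nabla\bv(x)\in\bD(\mathbb{R}^{d\times N})$ a.e.\ and, by \eqref{propFstar}, $F^*(\nabla\bv)\in L^1(\Omega)$, so $\bv$ is an admissible competitor with finite energy in the primal problem; combining $J^*(\bu)\le J^*(\bv)$ with the integrated inequality $F^*(\nabla\bv)\ge F^*(\nabla\bu)+\bT\cdot(\nabla\bv-\nabla\bu)$ yields exactly \eqref{nerovnost-g}.

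\textbf{Step 3 (the weak formulation under \eqref{novyas}).} Fix $\bw\in W^{1,\infty}_{\Gamma_D}(\Omega)^N$ and a sequence $(\bw^n)$ as in \eqref{novyas}. On the set $\{x\colon|\bT(x)|\le n\}$ one has $\nabla\bu=\bD(\bT)$ with $|\bT|\le n$; since $\cA(\bS)=\partial\bD(\bS)/\partial\bS$ is positive definite by \eqref{A1-g}, $\bD$ is a local $\mathcal{C}^1$-diffeomorphism, so on that set $\nabla\bu$ takes values in a compact subset of $\bD(\mathbb{R}^{d\times N})$ lying at a positive distance from its boundary. Consequently, for $t>0$ sufficiently small (depending on $n$ and $\|\nabla\bw^n\|_\infty$) the competitors $\bv^\pm\coloneqq\bu\pm t\bw^n$ belong to $\mathcal{S}^*$, satisfy $\nabla\bv^\pm(x)\in\bD(\mathbb{R}^{d\times N})$ a.e.\ (using that $\nabla\bw^n=\b0$ off $\{|\bT|\le n\}$) and $\bD^{-1}(\nabla\bv^\pm)\in L^1(\Omega)^{d\times N}$ (bounded on $\{|\bT|\le n\}$, equal to $\bT$ elsewhere), so they are admissible in \eqref{nerovnost-g}. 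Inserting $\bv^\pm$ into \eqref{nerovnost-g}, dividing by $t$ and using both signs gives
\begin{equation*}
\int_\Omega\bT\cdot\nabla\bw^n\ddd x=\int_\Omega\bef\cdot\bw^n\ddd x+\int_{\Gamma_N}\bg\cdot\bw^n\ddd S.
\end{equation*}
Finally I would let $n\to\infty$: the left-hand side converges to $\int_\Omega\bT\cdot\nabla\bw\ddd x$ because $\bT\in L^1(\Omega)^{d\times N}$ and $\nabla\bw^n\rightharpoonup^*\nabla\bw$ weakly-$*$ in $L^\infty$, while $\bw^n\to\bw$ uniformly on $\overline\Omega$ (the sequence being bounded in $W^{1,\infty}$), so that both terms on the right, including the boundary integral, pass to the limit. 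Hence \eqref{TT1*-G} holds, and together with $\nabla\bu=\bD(\bT)$ this shows that $(\bu,\bT)$ is a weak solution to~\eqref{elastic-g} in the sense of Definition~\ref{def1.1}.

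\textbf{Expected main difficulty.} The crux is Step~2: ruling out that $\nabla\bu$ takes values in $\overline{\bD(\mathbb{R}^{d\times N})}\setminus\bD(\mathbb{R}^{d\times N})$ on a set of positive measure. This is exactly where the safety strain condition \eqref{proto-G2}/\eqref{G2_analytic} is indispensable, as it is what turns the comparison estimate into a $\lambda$-uniform $L^1$ bound on $\bT_\lambda$ and hence, via Fatou, prevents $\bT$ from concentrating into a genuine Radon measure. In Step~3 the only delicate point is securing the admissibility of the perturbations $\bv^\pm$ in \eqref{nerovnost-g}, which is precisely why the test functions are localised to the sublevel sets of $|\bT|$ in \eqref{novyas}.
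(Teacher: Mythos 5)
Your argument is correct and follows essentially the same route as the paper's own proof: direct method with coercivity coming from $F^*\equiv+\infty$ outside $\overline{\bD(\mathbb{R}^{d\times N})}$, the deformation $\bu_\lambda=(1-\lambda)\bu+\lambda\bu_0$ combined with the safety strain condition to produce a $\lambda$-uniform $L^1$ bound on $\bT_\lambda=\bD^{-1}(\nabla\bu_\lambda)$, Fatou's lemma to construct $\bT$ and to force $\nabla\bu\in\bD(\mathbb{R}^{d\times N})$ a.e., and perturbations localised to $\{|\bT|\le n\}$ for the weak formulation. The only substantive deviation is in your derivation of \eqref{nerovnost-g}: you integrate the $F^*$-subgradient inequality at $\nabla\bu$ directly against the minimality of $J^*$, whereas the paper re-runs the $\lambda$-deformation for a general competitor $\bv$, splits the resulting integrand into a nonnegative part (via monotonicity of $\bD^{-1}$) plus an $L^1$ part, and passes $\lambda\to 0$ by Fatou; your version is a touch more economical and equivalent. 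Your two-sided perturbations $\bu\pm t\bw^n$ in Step~3 are likewise an equivalent reorganisation of the paper's one-sided test followed by arbitrariness of $\bw$. One small slip worth fixing: Lemma~\ref{basici} gives strict convexity of $F^*$ on the open set $\bD(\mathbb{R}^{d\times N})$, not on the closure $\overline{\bD(\mathbb{R}^{d\times N})}$; the uniqueness of $\bu$ still follows because a midpoint equality for the convex, extended-real $F^*$ forces affinity on the whole segment $[\nabla\bu_1(x),\nabla\bu_2(x)]$, and the relative interior of that segment lies in $\bD(\mathbb{R}^{d\times N})$ whenever the endpoints differ.
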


We elaborate on these results in more detail. We see from Lemmas~\ref{AL1}--\ref{AL2} that finding a weak solution to the problem under consideration is equivalent to finding a minimizer to the dual problem. {Unfortunately}, {even if} the infimum of $J$ exists and is finite, we cannot claim that it is attained for some $\bT \in \mathcal{S}$ because of the nonreflexivity of the space $L^1$, although it is in general attained for some $\bT \in \mathcal{M}(\Omega \cup \overline{\Gamma_N})^{d\times N}$. On the other hand, we see that the solution to the primal problem always exists and is unique, and therefore~$\bu$ is a good candidate for being the first component of a weak solution couple $(\bu, \bT)$ to the original weak formulation. However, because~\eqref{nerovnost-g} is only an inequality rather than an equality, we cannot claim that $\bT \in \mathcal{S}$; in particular,~$\bT$ may have a singular part that penalizes~$\bT$ in order to ensure that it belongs to
$\mathcal{S}$. It therefore seems natural, in view of Lemmas~\ref{AL3} and~\ref{AL2}, to define a weaker notion of solution. A natural idea is to weaken the topology of the set of admissible functions~$\bT$ so that it is merely a weakly-$*$ closed set. Therefore, we modify $\mathcal{S}$ to $\mathcal{S}^m$, where
$$
\mathcal{S}^m\coloneqq\left\{\bT\in \mathcal{M}(\Omega \cup \overline{\Gamma_N})^{d\times N} \colon \langle\bT, \nabla \bw\rangle-\int_{\Omega}\bef\cdot \bw\ddd x =\int_{\Gamma_{N}}\bg \cdot \bw\ddd S \textrm{ for all }\bw \in \mathcal{C}^1_{\Gamma_D}(\overline{\Omega})^N \right\}.
$$
One can then relax the notion of a minimizer to~the dual problem to the following.

\bigskip

\noindent
{\bf Relaxed dual problem:} Assume that $\bu_0 \in \mathcal{C}^1(\overline{\Omega})^N$. Find $\bT \in \mathcal{S}^m$ such that for all $\bW \in \mathcal{S}^m$ one has
\begin{equation}\label{minimiz5}
\mathcal{J}(\bT) \coloneqq \int_{\Omega} F(\bT^r)\ddd x+\mathcal{F}_{\infty}(\bT^s)-\langle \nabla \bu_0,\bT\rangle \le \int_{\Omega} F(\bW^r)\ddd x+\mathcal{F}_{\infty}(\bW^s)-\langle \nabla \bu_0, \bW\rangle.
\end{equation}
Here, we have used the decomposition of a general measure~$\bT$ as $\bT=\bT^r + \bT^s$, where\footnote{We denote here by $|\bT^r|$ the standard Euclidean norm of $\bT^r$, while $|\bT^s|$ denotes the Radon measure, which is the variation of the vector-valued measure $\bT^s$, defined, for any compact set $K$, as $|\bT^s|(K) \coloneqq \sup_{\{\bE\in \mathcal{C}(K)^{d\times N} \colon |\bE|\le 1\}} \langle \bT^s,\bE\rangle.$ } $|\bT^r|$ is absolutely continuous with respect to the Lebesgue measure and $\bT^s$ is a singular measure. Furthermore, $\mathcal{F}_{\infty}(\bT^s)$ denotes the \emph{recession functional}
$$
\mathcal{F}_{\infty}(\bT^s)\coloneqq\int_{\Omega \cup \overline{\Gamma_N}}F_\infty\left(\frac{\bT^s}{|\bT^s|}\right) \ddd |\bT^s|,
$$
where $F_{\infty}(\bT)\coloneqq\lim_{n\to \infty} \frac{1}{n} F(n\bT)$ stands for the recession function and $\bT^s / |\bT^s|$ for the Radon--Nikod\'{y}m density of $\bT^s$ with respect to $|\bT^s|$ (which is well-defined $|\bT^s|$-a.e.). Thanks to the convexity of~$F$ the recession function is well-defined (with values in $\mathbb{R}$ because of the linear growth of~$F$), it is strictly positive on $\mathbb{R}^{d \times N}
\setminus \{0\}$, positively $1$-homogeneous and convex. In turn, also the functional $\mathcal{J}$ is convex on $\mathcal{S}^m$ (cf. \cite[Proposition~2.37]{AMBFUSPAL00}). The significance of this \emph{relaxed problem} is encapsulated in the next lemma (see Appendix~\ref{app_minimizers_ws} for a sketch of its proof).

\begin{Lemma} \label{AL4}
Let $\Omega \subset \mathbb{R}^d$ be a bounded Lipschitz domain, let the mapping~$\bD$ satisfy~\eqref{AE2-g}--\eqref{A1-g} with symmetric~$\cA$ and let the data satisfy~\eqref{D1}--\eqref{D3}, with $\bu_0 \in \mathcal{C}^1(\overline{\Omega})^N$.
Then, the minimum of $\mathcal{J}$ in $\mathcal{S}^m$ is attained for some $\bT \in \mathcal{S}^m$, and for any two minimizers $\bT,\bar{\bT} \in \mathcal{S}^m$ we have
\begin{equation*}
 \bT^r = \bar{\bT}^r \quad \text{a.e.~in } \Omega \qquad \text{and} \qquad \langle\bT^s - \bar{\bT}^s, \nabla \bw\rangle = 0 \quad \textrm{for all } \bw \in \mathcal{C}^1_{\Gamma_D}(\overline{\Omega})^N.
\end{equation*}
Moreover, the unique minimizer~$\bu$ to the primal problem satisfies $\nabla \bu=\bD(\bT^r)$ and the couple $(\bu,\bT^r)$ satisfies~\eqref{nerovnost-g}.
\end{Lemma}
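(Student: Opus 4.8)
The plan is to establish Lemma~\ref{AL4} by the direct method of the calculus of variations, exploiting the lower semicontinuity theory for functionals of measures (in particular the Goffman--Serrin / Reshetnyak framework as presented in \cite{AMBFUSPAL00}). First I would show that $\mathcal{S}^m$ is nonempty and sequentially weakly-$*$ compact in $\mathcal{M}(\Omega \cup \overline{\Gamma_N})^{d\times N}$ after intersection with a sublevel set of $\mathcal{J}$: nonemptiness follows because $\mathcal{S} \subset \mathcal{S}^m$ and $\mathcal{S} \ne \emptyset$ by Lemma~\ref{AL3} (or directly, $\nabla \bu_0 \in \mathcal{S}$ up to adding a suitable correction); the affine constraint defining $\mathcal{S}^m$ is clearly weakly-$*$ closed since the test functions $\bw \in \mathcal{C}^1_{\Gamma_D}(\overline\Omega)^N$ give continuous linear functionals on $\mathcal{M}(\Omega \cup \overline{\Gamma_N})^{d\times N}$. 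For the coercivity, I would use the linear growth from below implied by \eqref{AE2-g}: since $F(\bT) \ge C_1|\bT| - C_0$ and $F_\infty(\bE) \ge C_1|\bE|$ for $|\bE|=1$ (these follow from \eqref{potende} and the definition of the recession function, as recorded in Lemma~\ref{basici}), one gets $\mathcal{J}(\bT) \ge C_1 \|\bT\|_{\mathcal{M}} - C_0|\Omega| - \|\nabla \bu_0\|_\infty \|\bT\|_{\mathcal{M}}$, and since $\|\nabla \bu_0\|_\infty < C_1$ would be needed — but here one only has \eqref{proto-G2}, so instead I would invoke the sharper coercivity coming from \eqref{G2_analytic}, which is exactly designed to beat the recession term: $F_\infty(\bE) - \nabla \bu_0(x)\cdot\bE \ge C_1 > 0$ for $|\bE|=1$, giving a genuine a~priori bound on $\|\bT^s\|_{\mathcal{M}}$, and a corresponding bound on $\|\bT^r\|_{L^1}$ via the superlinear-at-infinity-after-subtraction-of-the-linear-part structure plus \eqref{D1}, \eqref{D3}. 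Hence any minimizing sequence is bounded in $\mathcal{M}$ and has a weakly-$*$ convergent subsequence with limit in $\mathcal{S}^m$.

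Next I would prove weak-$*$ lower semicontinuity of $\mathcal{J}$. The functional $\bT \mapsto \int_\Omega F(\bT^r)\,dx + \mathcal{F}_\infty(\bT^s)$ is precisely of the form treated by Reshetnyak-type theorems for convex integrands of linear growth whose recession function is $F_\infty$: since $F$ is convex, continuous, of linear growth, and its recession function $F_\infty$ is exactly the integrand used on the singular part, the functional is weakly-$*$ lower semicontinuous on $\mathcal{M}(\Omega \cup \overline{\Gamma_N})^{d\times N}$ (this is \cite[Theorem~2.38 / Proposition~2.37]{AMBFUSPAL00} or the classical Goffman--Serrin theorem). The linear term $-\langle \nabla \bu_0, \bT\rangle$ is weakly-$*$ continuous because $\nabla \bu_0 \in \mathcal{C}(\overline\Omega)^{d\times N}$ (using $\bu_0 \in \mathcal{C}^1(\overline\Omega)^N$ from the hypothesis). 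Combining compactness and lower semicontinuity by the direct method yields a minimizer $\bT \in \mathcal{S}^m$.

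For the partial uniqueness statement, I would use strict convexity of $F$ on $\mathbb{R}^{d\times N}$ (Lemma~\ref{basici}, via $h$-monotonicity) together with convexity of $\mathcal{F}_\infty$ and linearity of the constraint. Given two minimizers $\bT, \bar\bT$, their average $\frac12(\bT+\bar\bT)$ lies in $\mathcal{S}^m$ and by convexity is also a minimizer; strict convexity of $F$ on the absolutely continuous parts forces $\bT^r = \bar\bT^r$ a.e., while the identity $\mathcal{F}_\infty(\tfrac12(\bT^s+\bar\bT^s)) = \tfrac12\mathcal{F}_\infty(\bT^s) + \tfrac12\mathcal{F}_\infty(\bar\bT^s)$ combined with the $1$-homogeneity and strict positivity of $F_\infty$ on the unit sphere forces the polar vectors to coincide $|\bT^s + \bar\bT^s|$-a.e.; then testing the constraint difference against $\bw$ gives $\langle \bT^s - \bar\bT^s, \nabla\bw\rangle = 0$ for all admissible $\bw$. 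Finally, to identify $\nabla\bu = \bD(\bT^r)$ and \eqref{nerovnost-g}: the Euler--Lagrange inequality for the minimizer $\bT$ of $\mathcal{J}$, restricted to competitors $\bW = \bT + t(\bG - \bT)$ with $\bG \in \mathcal{S}$ absolutely continuous, gives $\int_\Omega \bD(\bT^r)\cdot(\bG - \bT^r)\,dx \ge \langle \nabla\bu_0, \text{(corresponding variation)}\rangle$; one then shows this matches the variational inequality characterizing the primal minimizer $\bu$ via \eqref{derpot}, so by the uniqueness in Lemma~\ref{AL2} we get $\nabla\bu = \bD(\bT^r)$ and hence \eqref{nerovnost-g}. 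The main obstacle I anticipate is the careful treatment of the behavior of the measures \emph{at the boundary} $\overline{\Gamma_N}$ — the recession functional integrates over $\Omega \cup \overline{\Gamma_N}$, so one must be sure the lower semicontinuity theorem is applied on the right (locally compact) space and that the boundary trace interactions in the constraint (tested against $\mathcal{C}^1_{\Gamma_D}(\overline\Omega)^N$ rather than compactly supported functions) are handled consistently; a secondary technical point is verifying the coercivity genuinely follows from the weaker safety-strain condition \eqref{proto-G2}/\eqref{G2_analytic} rather than the simpler $\|\nabla\bu_0\|_\infty < C_1$.
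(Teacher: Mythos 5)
Your proposal follows essentially the same route as the paper: nonemptiness via $\mathcal{S}\subset\mathcal{S}^m$ and Lemma~\ref{AL3}, weak-$*$ closedness of the affine constraint, coercivity from the safety-strain condition in the form \eqref{G2_analytic} (you are right that the naive bound via $\|\nabla\bu_0\|_\infty<C_1$ would be too restrictive), Reshetnyak lower semicontinuity for existence, strict convexity of $F$ for the a.c.\ part and linearity of the constraint for the singular-part identity, and then the Euler--Lagrange argument of Lemmas~\ref{AL3}/\ref{AL2} for identification. Two small remarks. First, your aside that the equality $\mathcal{F}_\infty(\tfrac12(\bT^s+\bar\bT^s))=\tfrac12\mathcal{F}_\infty(\bT^s)+\tfrac12\mathcal{F}_\infty(\bar\bT^s)$ forces the polar vectors of $\bT^s$ and $\bar\bT^s$ to coincide $|\bT^s+\bar\bT^s|$-a.e.\ is not correct in general (consider singular measures with disjoint supports); the paper explicitly notes that $\mathcal{F}_\infty$ is only convex, not strictly convex, which is precisely why the lemma only asserts the weaker identity $\langle\bT^s-\bar\bT^s,\nabla\bw\rangle=0$, and that in turn is an immediate consequence of $\bT,\bar\bT\in\mathcal{S}^m$ together with $\bT^r=\bar\bT^r$ — no rigidity for $\mathcal{F}_\infty$ is needed. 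Second, the Reshetnyak theorem as quoted in \cite[Theorem~2.38]{AMBFUSPAL00} requires a positively $1$-homogeneous integrand; the paper handles this by extending the measures by zero to an open set $\Omega'\supset\Omega\cup\overline{\Gamma_N}$ and rewriting $\mathcal{J}$ in terms of the perspective function $\bar F(s,\bT)\coloneqq sF(\bT/s)$ (with $\bar F(0,\bT)=F_\infty(\bT)$) applied to the pair $(\mathcal{L}^d,\bT)$, which you should spell out if you invoke that theorem rather than an explicitly non-homogeneous Goffman--Serrin version. Finally, in the identification step the cleanest route — and the one the paper takes — is to perturb by $\bT+\lambda\tilde\bW$ with $\tilde\bW\in L^1(\Omega)^{d\times N}$ satisfying the homogeneous constraint \eqref{xmen}; this keeps the singular part frozen (so $\mathcal{F}_\infty$ is unchanged), is two-sided in $\lambda$, and hence gives the equality $\int_\Omega(\bD(\bT^r)-\nabla\bu_0)\cdot\tilde\bW\,\ddd x=0$ directly, which feeds into the $L^2$-Helmholtz argument from Lemma~\ref{AL3}; your choice of competitor $\bW=\bT+t(\bG-\bT)$ with $\bG\in\mathcal{S}$ also perturbs the singular part and yields only a one-sided inequality, so it would require more care.
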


Hence, it is evident that for problems with linear growth the relaxed dual formulation is optimal. Indeed, it is known that one cannot expect a better behavior for general potentials~$F$, see e.g.~\cite{FINN65,soucek71,GIAMODSOU79,BILFUC03}, and therefore one cannot avoid the presence of a singular part in~$\bT$. On the other hand, it is of importance to identify at least the support of the singular measure $\bT^s$. Not only is this relevant from the point of view of PDE theory, but the location of the support of the singular measure also has important consequences in continuum mechanics when one considers limiting strain models~\eqref{elastic}. Here, the description of the singular part of~$\bT$, which is in the context of continuum mechanics the Cauchy stress tensor, directly leads to the prediction of sets where stress concentration may occur, and it therefore leads to the identification of the set where a crack in an elastic body may be initiated or where material discontinuities may appear. On the other hand, inspired by \cite{ApBiFu10,BiFu02b}, one may also hope that by assuming uniform monotonicity of~$\bD$ in a suitable sense (such as in~\eqref{BiFu}, for example,) one can avoid the presence of a singular part $\bT^s$ in the interior of the domain $\Omega$ and can show that such a singular part (if it exists) is supported only on the Neumann part, $\Gamma_N$, of the boundary. (The reader should compare this with~\cite{BiFu02b} and \cite[Sect. 7]{BMRW2013}, where an analogous result has been proved for the counterpart of our problem, and the problematic part of the boundary there is only $\Gamma_D$.) It seems quite surprising that there is no known example of a problem with an Uhlenbeck structure of the form~\eqref{LSt2gcp} that exhibits a singularity in the interior of the domain. Indeed, all known examples involving a singular measure in the interior of the domain $\Omega$ are either for nonsmooth coefficients or for very general non-Uhlenbeck structures. This also leads to the hypothesis that for problems of the form~\eqref{LSt2g} one can obtain a unique solution whose singular part is due to the presence of the Neumann part of the boundary $\Gamma_N$. This is, in fact, the main result of the paper, which, for the prototypical example with the particular choice~\eqref{LSt2g}, is as follows.

\begin{Theorem}\label{TH00}
Let $\Omega \subset \mathbb{R}^d$ be a bounded Lipschitz domain. Assume that the data $(\bef,\bg)$ satisfy~\eqref{D1}--\eqref{D3} and consider $a>0$. Then, there exists a pair $(\bu,\bT) \in \mathcal{S}^* \times L^1(\Omega)^{d\times N}$ satisfying
\begin{alignat}{3}
\label{TH00_weak}
\int_{\Omega}\bT\cdot \nabla \bw \ddd x & = \int_{\Omega} \bef \cdot \bw\ddd x \qquad &&\textrm{for all } \bw \in \mathcal{C}^1_0(\Omega)^N, \\
\label{TH00_limit_rep}
\nabla \bu &=\frac{\bT}{(1+|\bT|^a)^{\frac{1}{a}}} \qquad && \textrm{in } \Omega
\end{alignat}
and
\begin{equation}
\label{TH00_min}
\int_{\Omega} \bT \cdot \nabla (\bu-\bv)\ddd x \le \int_{\Omega} \bef \cdot (\bu-\bv)\ddd x + \int_{\Gamma_N}\bg \cdot (\bu-\bv)\ddd S
\end{equation}
for all $\bv \in W^{1,\infty}(\Omega)^N$ that are equal to $\bu_0$ on $\Gamma_D$ and satisfy, for some $\tilde{\bT}\in L^1(\Omega)^{d\times N}$, $\nabla \bv = (1+|\tilde{\bT}|^a)^{-\frac{1}{a}} \tilde{\bT}$ in~$\Omega$. Furthermore, there is a $\tilde{\bg}\in (\mathcal{C}^1_0(\Gamma_N)^d)^*$ such that
\begin{align}
\label{TH00-G}
\int_{\Omega}\bT\cdot \nabla \bw \ddd x = \int_{\Omega} \bef \cdot \bw\ddd x +\langle \bg-\tilde{\bg},\bw\rangle|_{\Gamma_N} \qquad \textrm{ for all } \bw \in \mathcal{C}^1_{\Gamma_D}(\overline{\Omega})^N.
\end{align}
In addition,~$\bT$ is the regular part of the minimizer to~\eqref{minimiz5}, for which the singular part $\bT^s$
has the unique representation
$$
\langle \bT^s ,\nabla \bw\rangle = -\langle \tilde{\bg} ,\bw\rangle \quad \text{for all }\bw \in \mathcal{C}^1_{\Gamma_D}(\overline{\Omega})^N
$$
and consequently $\diver \bT^s=0$ in $\Omega$ in the sense of distributions. 

If $a<2/d$ and $d\ge 3$ or if $a\le 1$ and $d=2$, then $\spt |\bT^s|\subset \overline{\Gamma_N}$ and $\bT\in L^{d(1-a)/(d-2)}_{loc}(\Omega)^{d\times N}$ for $d\ge 3$ and $\bT\in L^p_{loc}(\Omega)^{2\times N}$, $p \in [1,\infty)$, for $d=2$; if, on the other hand, $a\ge 2/d$ and $d \geq 3$ or $a>1$ and $d=2$, then $\spt |\bT^s|\subset \{x\in \Omega \cup \overline{\Gamma_N} \colon M(|\bT|)(x)=\infty\}$, where $M$ denotes the maximal function. Moreover, we have
$$
\frac{\bT^s}{|\bT^s|} = \lim_{\varepsilon \to 0_+} \frac{\nabla \bu^{\varepsilon}}{| \nabla \bu^{\varepsilon}|} = \lim_{\varepsilon \to 0_+}\left(\frac{\bT}{|\bT|}\right)^{\varepsilon}, \qquad (\varepsilon >0)
$$
where the limit is understood in $L^1(\Omega,|\bT^s|)$ and the superscript ${\varepsilon}$ denotes the standard $\varepsilon$-mollification.
\end{Theorem}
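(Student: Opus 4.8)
The strategy is to obtain the solution pair $(\bu,\bT)$ as a limit of solutions to regularized problems in which the linear growth is replaced by a coercive $p$-growth structure, and then to pass to the limit using the compactness afforded by the a~priori bounds. Concretely, I would first fix $\varepsilon>0$ and replace the constitutive law $\beps^*$ (here $\bD$) by a perturbed map $\bD^\varepsilon$ that coincides with $\bD$ for $|\bT|$ bounded but is made uniformly monotone with, say, $p$-growth for some $p>1$; equivalently one perturbs the potential $F$ by adding $\varepsilon|\bT|^{p'}$ so that the dual problem becomes coercive on a reflexive space $L^{p'}(\Omega)^{d\times N}$. For each $\varepsilon$ the perturbed dual problem then has a unique minimizer $\bT^\varepsilon$, and by Lemma~\ref{AL3} (applied to the perturbed structure) a corresponding $\bu^\varepsilon$ with $\nabla \bu^\varepsilon = \bD^\varepsilon(\bT^\varepsilon)$ solving the perturbed weak formulation~\eqref{TT1*-G}. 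The safety strain condition~\eqref{proto-G2} (via~\eqref{G2_analytic}) and~\eqref{AE2-g} give the crucial uniform bound $\|\bT^\varepsilon\|_{L^1(\Omega)} \le C$ independent of $\varepsilon$, by testing the equation with $\bu^\varepsilon - \bu_0$ and using that $\bD(\bT^\varepsilon)\cdot \bT^\varepsilon \ge C_1|\bT^\varepsilon| - C_0$ while $\nabla \bu_0$ stays strictly inside the range of $\bD$; simultaneously $\|\nabla \bu^\varepsilon\|_\infty \le C_2$ from~\eqref{AE1-g}.

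Next I would extract limits: $\nabla \bu^\varepsilon \rightharpoonup^* \nabla \bu$ weakly-$*$ in $L^\infty$, and $\bT^\varepsilon \rightharpoonup \bT^s + \bT$ (regular plus singular part) weakly-$*$ in $\mathcal{M}(\Omega\cup\overline{\Gamma_N})^{d\times N}$, the regular part being the weak $L^1$ limit after the standard biting/Chacon decomposition that isolates a uniformly integrable piece from a remainder concentrating on a null set. The identification $\nabla \bu = \bD(\bT)$ a.e. — i.e.~\eqref{TH00_limit_rep} — is where the \emph{renormalized regularity} tool, and the $h$-monotonicity~\eqref{A1-g}, enter: one tests the difference of two (renormalized/truncated) equations and uses uniform monotonicity on the region $\{|\bT^\varepsilon|\le k\}$ to get strong convergence of $\bT^\varepsilon$ there, so that the pointwise constitutive relation survives the limit on the absolutely continuous part. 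Passing to the limit in the weak formulation yields~\eqref{TH00_weak} on $\mathcal{C}^1_0(\Omega)$ immediately (the singular boundary mass does not see interior test functions once $\diver \bT^s = 0$ is established), and the boundary terms collect into the functional $\tilde{\bg}$ in~\eqref{TH00-G}; the variational inequality~\eqref{TH00_min} follows from lower semicontinuity of $J$ together with Lemma~\ref{AL2}/\ref{AL4}, and minimality of $\bT$ (its regular part) for the relaxed dual problem~\eqref{minimiz5} together with the uniqueness statements of Lemma~\ref{AL4} pin down $\bT^s$ via $\langle \bT^s,\nabla\bw\rangle = -\langle\tilde\bg,\bw\rangle$, whence $\diver \bT^s = 0$ in $\mathcal{D}'(\Omega)$.

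The localization of $\spt|\bT^s|$ is the heart of the theorem and the part I expect to be the main obstacle. Under the subcritical condition ($a<2/d$, $d\ge 3$, or $a\le 1$, $d=2$) I would derive an \emph{interior} higher-integrability estimate for $\bT^\varepsilon$ uniform in $\varepsilon$: testing the equation against something like $\eta^2 (\bu^\varepsilon - (\bu^\varepsilon)_{B})$ or a Caccioppoli-type quantity built from $\bD^{-1}$, exploiting that $\nabla\bu^\varepsilon$ is bounded and that $h(s)\ge C_1(1+s)^{-1-a}$ forces $|\bD^{-1}(\nabla\bu^\varepsilon)|$ to be controlled by negative Sobolev norms of $\nabla\bu^\varepsilon$ via the $q<1+2/d$ threshold~\eqref{BiFu}; the Sobolev embedding in dimension $d$ then converts the $L^1$ bound on $\bT^\varepsilon$ into an $L^r_{loc}$ bound with $r = d(1-a)/(d-2)>1$ (any $r<\infty$ when $d=2$), which is uniform in $\varepsilon$ and hence passes to the limit, giving $\bT\in L^r_{loc}$ and, crucially, \emph{no} interior concentration, i.e. $\spt|\bT^s|\subset\overline{\Gamma_N}$. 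In the supercritical regime one cannot beat $L^1$, but a truncation argument combined with a weak-type maximal-function estimate still confines the concentration set to $\{M(|\bT|)=\infty\}$. Finally, the formula for the polar $\bT^s/|\bT^s|$ as the limit of $\nabla\bu^\varepsilon/|\nabla\bu^\varepsilon|$ follows from the structure of the recession function: on the concentration set $|\bT^\varepsilon|\to\infty$, so $\bD^\varepsilon(\bT^\varepsilon)=\bD(\bT^\varepsilon)$ has $|\bD(\bT^\varepsilon)|\to C_2$ with direction asymptotically aligned with $\bT^\varepsilon$ (this uses $F_\infty$ being $1$-homogeneous and the asymptotic Uhlenbeck/symmetric structure), and a careful measure-theoretic argument — convergence of the mollified directions in $L^1(\Omega,|\bT^s|)$ — yields the stated identity. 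The delicate point throughout is that all intermediate estimates must be \emph{uniform in} $\varepsilon$ and compatible with the biting decomposition, so that nothing essential is lost when the regular and singular parts separate in the limit; this is exactly what the renormalized regularity framework is designed to handle.
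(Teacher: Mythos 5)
Your architecture matches the paper's: regularize to a coercive problem with $p$-growth, extract weak-$*$/biting limits, establish a.e.~convergence of $\bT^n$, recover the divergence equation in the interior via renormalization, and then analyze the singular part of the measure limit. The specific regularization you propose (adding $\varepsilon|\bT|^{p'}$ to $F$) is equivalent in spirit to the paper's choice of adding $\bT^n/\big(n(1+|\bT^n|^2)^{(n-1)/(2n)}\big)$ to $\beps^*$, which yields $L^{1+1/n}$ coercivity; both are fine. The $L^1$ a~priori bound via the safety strain condition and testing with $\bu^n-\bu_0$ is also exactly the paper's argument.

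The place where your sketch is not adequate is the \emph{interior weighted gradient estimate} for $\bT^n$, which is the engine that makes everything else work. You propose to test against ``something like $\eta^2(\bu^\varepsilon - (\bu^\varepsilon)_B)$'' — but that test function is of the same order as $\bu^\varepsilon$ itself and simply reproduces the localized energy estimate; it cannot yield any control on $\nabla\bT^n$. What the paper actually does is test the pointwise equation $-\diver\bT^n = \bef$ against $-\eta^2\Delta\bu^n$, integrate by parts, and — using the constitutive relation $\partial_k\beps(\bu^n) = (\cA(\bT^n)+\cB^n(\bT^n))\partial_k\bT^n$ together with the $h$-monotonicity and the asymptotic symmetry~\eqref{A3} — arrive at the uniform bound $\int_\Omega h(|\bT^n|)|\nabla\bT^n|^2\eta^2\,\ddd x\le C(\eta)$. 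This is an \emph{a~priori second-order} estimate, and it is not a Caccioppoli inequality for $\bu^n$ but for $\bT^n$; the distinction is essential. Without it you cannot run the $\tilde h$-trick that produces a.e.\ convergence of $\bT^n$ (by defining $a^n=\tilde h(|\bT^n|)$ and $\bB^n=\tilde h(|\bT^n|)\bT^n$ and using compactness in $W^{1,2}_{\rm loc}$), and you cannot obtain the higher integrability $(1+|\bT^n|)^{(2-q)/2}\in W^{1,2}_{\rm loc}$ uniformly, which is what gives the claimed $L^{d(1-a)/(d-2)}_{\rm loc}$ integrability and, in the subcritical case, kills the interior singular part.

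Two smaller points. First, you conflate the two structural regimes when describing how the interior divergence equation~\eqref{TH00_weak} is recovered. Under~\eqref{BiFu} the higher integrability alone gives strong $L^1_{\rm loc}$ convergence of $\bT^n$ and the equation passes to the limit directly; under the asymptotic Uhlenbeck hypothesis it does not, and one must insert a cutoff $\tau_k(|\bT^n|)$ (renormalization), integrate by parts, and exploit the near-cancellation $g(|\bT^n|)\bD(\bT^n)\approx\bT^n$ from~\eqref{A5} together with the weighted gradient bound to show the resulting commutator terms vanish as $k\to\infty$ after $n\to\infty$. These are genuinely different mechanisms. Second, your claim that $|\bD(\bT^\varepsilon)|\to C_2$ along the concentration set is imprecise: the relevant limit is $\alpha\coloneqq\lim_{t\to\infty}t/g(t)$, which need not equal the structural constant $C_2$ (they happen to coincide for the prototypical $\bD$ if $C_2=1$). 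The identification of the polar $\bT^s/|\bT^s|$ via mollified directions then proceeds by proving strong $L^2(\Omega,|\bT^s|)$ convergence of $\alpha^{-1}\beps(\bu^\varepsilon)$ to $\bT^s/|\bT^s|$ using the convexity of $\beps^*(\mathbb{R}^{d\times d}_{\rm sym})$ and a Minty-type inequality — a step your sketch does not address but that the paper devotes considerable care to. With the correct test function for the interior estimate in place, the rest of your plan is aligned with the paper.
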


The next lemma, proved in Appendix~\ref{app_minimizers_ws}, provides additional information about the pair of functions $(\bu,\bT)$ whose existence is guaranteed by Theorem~\ref{TH00}.

\begin{Lemma}
\label{AL5}
Under the assumptions of Theorem~\ref{TH00} \textup(or, more generally, of Theorem~\ref{T1-G}\textup) the following is true: the function $\bu \in W^{1,\infty}(\Omega)^N$ is the unique minimizer of the primal problem, and, if $\tilde{\bg} = \mathbf{0}$, then $\bT \in L^1(\Omega)^{d\times N}$ is a minimizer of the dual problem.
\end{Lemma}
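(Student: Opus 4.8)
The plan is to derive both claims directly from the variational inequality~\eqref{TH00_min} (which is~\eqref{nerovnost-g}) and from the weak identity~\eqref{TH00-G}, using only elementary convex analysis and the strict monotonicity of~$\bD$ encoded in~\eqref{A1-g}; no new compactness or regularity input is required. For the prototypical~$\bD$ the relevant properties of the pair~$(\bu,\bT)$ are exactly the content of Theorem~\ref{TH00}, and in the setting of Theorem~\ref{T1-G} the analogues hold, so the argument below is unchanged (with $\bD(\bT)$ in place of $\bT/(1+|\bT|^a)^{1/a}$).

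\textbf{Primal minimality.} I would argue by uniqueness. Lemma~\ref{AL2} already supplies a unique minimizer~$\bar{\bu}$ of the primal problem~\eqref{minimiz-prim} together with a unique $\bar{\bT}\in L^1(\Omega)^{d\times N}$ satisfying $\nabla\bar{\bu}=\bD(\bar{\bT})$ and obeying~\eqref{nerovnost-g} against every competitor~$\bv$ with $\bv-\bu_0\in W^{1,\infty}_{\Gamma_D}(\Omega)^N$ and $\bD^{-1}(\nabla\bv)\in L^1(\Omega)^{d\times N}$. First I would observe that this competitor class is exactly the one in~\eqref{TH00_min}, since requiring $\nabla\bv=(1+|\tilde{\bT}|^a)^{-1/a}\tilde{\bT}$ for some $\tilde{\bT}\in L^1(\Omega)^{d\times N}$ is the same as $\bD^{-1}(\nabla\bv)\in L^1(\Omega)^{d\times N}$. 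Then I would insert $\bv=\bu$ into~\eqref{nerovnost-g} written for $(\bar{\bu},\bar{\bT})$ and $\bv=\bar{\bu}$ into~\eqref{TH00_min} written for $(\bu,\bT)$ --- both are legitimate, since $\bu,\bar{\bu}\in\mathcal{S}^*$ while $\bD^{-1}(\nabla\bu)=\bT\in L^1(\Omega)^{d\times N}$ by~\eqref{TH00_limit_rep} and $\bD^{-1}(\nabla\bar{\bu})=\bar{\bT}\in L^1(\Omega)^{d\times N}$ by Lemma~\ref{AL2} --- and add the two inequalities. Their right-hand sides cancel and, using $\nabla\bu=\bD(\bT)$ and $\nabla\bar{\bu}=\bD(\bar{\bT})$, I am left with
\begin{equation*}
\int_{\Omega}(\bar{\bT}-\bT)\cdot\bigl(\bD(\bar{\bT})-\bD(\bT)\bigr)\ddd x=\int_{\Omega}(\bar{\bT}-\bT)\cdot\nabla(\bar{\bu}-\bu)\ddd x\le 0 .
\end{equation*}
Because~\eqref{A1-g} forces~$\bD$ to be strictly monotone, the integrand is pointwise nonnegative and vanishes only where $\bar{\bT}=\bT$; hence $\bT=\bar{\bT}$ a.e.\ in~$\Omega$, so $\nabla\bu=\nabla\bar{\bu}$ a.e.\ in~$\Omega$, and since $\bu,\bar{\bu}\in\mathcal{S}^*$ share the trace on $\Gamma_D$ (resp.\ the mean when $\Gamma_D=\emptyset$) this gives $\bu=\bar{\bu}$. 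Thus~$\bu$ is the unique minimizer of the primal problem.

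\textbf{Dual minimality when $\tilde{\bg}=\mathbf{0}$.} Under this hypothesis~\eqref{TH00-G} becomes the exact identity $\int_{\Omega}\bT\cdot\nabla\bw\ddd x=\int_{\Omega}\bef\cdot\bw\ddd x+\int_{\Gamma_N}\bg\cdot\bw\ddd S$ for all $\bw\in\mathcal{C}^1_{\Gamma_D}(\overline{\Omega})^N$. I would first extend it to every $\bw\in W^{1,\infty}_{\Gamma_D}(\Omega)^N$: by~\eqref{zerotrace} such a~$\bw$ is the a.e.\ limit of a sequence in $\mathcal{C}^1_{\Gamma_D}(\overline{\Omega})^N$ that can be taken bounded uniformly in $W^{1,\infty}$, whereupon dominated convergence handles all three terms (using $\bT\in L^1(\Omega)^{d\times N}$, $\bef\in L^2(\Omega)^N$, $\bg\in L^1(\Gamma_N)^N$, and a.e.\ convergence of the traces on $\Gamma_N$); hence $\bT\in\mathcal{S}$. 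Next, let $\bW\in\mathcal{S}$ be arbitrary. Convexity of~$F$ together with $\nabla F=\bD$ (cf.~\eqref{potende}--\eqref{derpot} and Lemma~\ref{basici}) and $\nabla\bu=\bD(\bT)$ gives the pointwise subgradient inequality $F(\bW)\ge F(\bT)+\nabla\bu\cdot(\bW-\bT)$ a.e.\ in~$\Omega$, and all terms are integrable since $|F(\cdot)|\le C_2|\cdot|$ by~\eqref{AE1-g} and~\eqref{potende}, while $\nabla\bu\in L^\infty$ and $\bW,\bT\in L^1$. Subtracting the constraint identities for~$\bW$ and for~$\bT$, both tested with $\bw=\bu-\bu_0\in W^{1,\infty}_{\Gamma_D}(\Omega)^N$, yields $\int_{\Omega}(\bW-\bT)\cdot\nabla\bu\ddd x=\int_{\Omega}(\bW-\bT)\cdot\nabla\bu_0\ddd x$; substituting this into the integrated subgradient inequality produces $\int_{\Omega}F(\bW)-\nabla\bu_0\cdot\bW\ddd x\ge\int_{\Omega}F(\bT)-\nabla\bu_0\cdot\bT\ddd x$, i.e.\ $J(\bW)\ge J(\bT)$ in the notation of~\eqref{minimiz}. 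As $\bW\in\mathcal{S}$ was arbitrary,~$\bT$ minimizes the dual problem.

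\textbf{Expected obstacle.} The only genuinely delicate point is the density step in the second part: the approximating sequence for a $W^{1,\infty}_{\Gamma_D}$ test function must be kept bounded in $W^{1,\infty}$ so that the volume term can be dominated against $\bT\in L^1$ and the boundary term against $\bg\in L^1(\Gamma_N)$, and so that the traces converge --- a standard but slightly technical truncation-and-mollification argument relying on~\eqref{zerotrace}. Everything else is routine bookkeeping: the equivalence of the two descriptions of admissible competitors, the integrability of $F(\bT)$ and of the products entering the estimates (immediate from $|\bD|\le C_2$, $\bD(\bT)\cdot\bT\ge C_1|\bT|-C_0$ and $\bT\in L^1$), and the strict monotonicity of~$\bD$ noted above. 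I therefore expect no conceptual difficulty, the mechanism being precisely the uniqueness argument that already underlies Lemmas~\ref{AL1}--\ref{AL2}.
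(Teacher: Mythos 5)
Your proof is correct, and for the primal half it takes a genuinely different route from the paper's. The paper argues directly: for a competitor $\bv \in \mathcal{S}^*$ with $\nabla \bv = \bD(\tilde{\bT})$, $\tilde{\bT} \in L^1$, it writes
\begin{equation*}
J^*(\bu) \leq J^*(\bv) + \int_{\Omega}\Bigl(\frac{\partial F^*(\nabla \bu)}{\partial \bB} - \bT\Bigr)\cdot(\nabla\bu - \nabla\bv)\ddd x = J^*(\bv),
\end{equation*}
using~\eqref{TT2-G}, convexity of $F^*$, and $\nabla\bu = \bD(\bT)$ together with~\eqref{derpot2}; it then extends this inequality to \emph{all} of $\mathcal{S}^*$ by interpolating $\bv_t \coloneqq t\bu_0 + (1-t)\bv$, using~\eqref{proto-G2}, convexity of $\overline{\bD(\mathbb{R}^{d\times N})}$ and finiteness of $J^*(\bu_0)$, and letting $t\searrow 0$. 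You instead invoke Lemma~\ref{AL2} to obtain the unique primal minimizer $\bar{\bu}$ with its companion $\bar{\bT}$, then pair $\bv=\bu$ in~\eqref{nerovnost-g} for $(\bar{\bu},\bar{\bT})$ against $\bv=\bar{\bu}$ in~\eqref{TH00_min} for $(\bu,\bT)$, sum, and use strict monotonicity~\eqref{subst2} to force $\bT = \bar{\bT}$ and hence $\bu = \bar{\bu}$. Your route is shorter because it outsources the passage to general competitors in $\mathcal{S}^*$ to Lemma~\ref{AL2}; the paper's route is more self-contained, establishing the variational inequality directly from the convexity structure without re-running the uniqueness machinery. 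Both require the (implicit) assumption of symmetric $\mathcal{A}$ so that $F, F^*$ and the primal/dual problems are defined, and both validly use that $\bar{\bu}$ and $\bu$ lie in the admissible test class for~\eqref{TH00_min} and~\eqref{nerovnost-g}, respectively, thanks to $\bD^{-1}(\nabla\bu) = \bT \in L^1$ and $\nabla\bar{\bu} = \bD(\bar{\bT})$ with $\bar{\bT}\in L^1$. Your dual-minimality argument — establishing $\bT \in \mathcal{S}$ by a density argument and then applying the subgradient inequality for $F$ together with the constraint identity tested against $\bu - \bu_0$ — is essentially identical to the paper's, and you correctly flag the $W^{1,\infty}$-density step as the only technical delicacy (the paper also leaves this as "a density argument").
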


\begin{Remark}
In the special case $\Gamma_N = \emptyset$, Theorem~\ref{TH00} holds with $\tilde{\bg} = \b0$ and hence, in this case the function~$\bT$ is in fact a minimizer of $J$ in $\mathcal{S}$; this means that Theorem~\ref{TH00} guarantees the existence of a weak solution $(\bu, \bT)\in \mathcal{S}^* \times \mathcal{S}$ to~\eqref{elastic-g} with~$\bD$ given by~\eqref{LSt2g} for all $a>0$. This is a significant improvement of the theory for the problem~\eqref{elastic} developed earlier. More precisely, the equilibrium and constitutive equations in~\eqref{elastic} in combination with the requirement that the unknowns $(\bu, \bT)$ are spatially periodic were already analyzed in~\cite{BMS2014}, and the existence of weak solution was established for $a\in (0, 2/d)$. Also, the concept of renormalized solution was introduced there and its existence was proved for all $a>0$.

The problem~\eqref{elastic} with $\Gamma_N = \emptyset$ was investigated in~\cite{BMRS2014} and the authors proved the existence of a weak solution for $a\in (0, 1/d)$. A novel tool that enables us to achieve now the improvement to the full range $a > 0$ is presented in Subsect.~\ref{4.5}; we call this method \emph{renormalized} regularity. Based on the results stated in Theorem~\ref{TH00} and valid for the case $\Gamma_N = \emptyset$, one can conjecture that for a ``minimal surface" problem with Neumann boundary condition on the whole of $\partial \Omega$ a weak solution always exists.

Finally, we wish to emphasize that even in the general case $\Gamma_N \neq \emptyset$, we are able to show that the equilibrium equation $\diver \bT = \bef$ holds in $\Omega$ \textup(in the sense of distributions\textup) and
that, in analogy with the results available for the minimal surface equation, the presence of a nontrivial $\tilde{\bg}$ is essential in some cases. Of course, it is of interest to identify assumptions \textup(such as convexity of $\Omega$\textup) that guarantee $\tilde{\bg} = \b0$.
\end{Remark}

The rest of the paper is organized as follows. In Section~\ref{Sec2} we formulate more precisely the assumptions on the admissible choice of~$\bD$ and state the main result for the original problem~\eqref{elastic-g} and also for the limiting strain model~\eqref{elastic}. In particular, we do not assume the symmetry of $\mathcal{A}$ (which will be relaxed to an \emph{asymptotically} symmetric structure) and we also distinguish between the case when~\eqref{BiFu} is assumed and when an asymptotic Uhlenbeck structure is involved. The relevant properties of~$\bD$ (as well as of the potential~$F$ and its conjugate~$F^*$), which are used throughout the paper, are mostly standard results from convex analysis, but for the convenience of the reader they are provided in Appendix~\ref{Saux}. Moreover, we collect the proofs of the Lemmas~\ref{AL1}--\ref{AL5} in Appendix~\ref{app_minimizers_ws}. Section~\ref{S2} is then concerned with the proof of uniqueness of the solution. Section~\ref{S3} is the core of this paper: it contains the proof of the existence of solutions. Because of the linear growth setting we need to work here with approximations of the problem, for which various a priori estimates are derived. The proof of the main result relies heavily on the concept of a renormalized weak solution, on a new technique for the identification of the limit in~\eqref{TH00_limit_rep} of the approximations, and the justification of the weak formulation in~\eqref{TH00_weak}.

For the sake of brevity, we shall confine ourselves to the proofs for the limiting strain model (involving the symmetric gradient; cf.~\eqref{elastic}) because this model is considerably more difficult to analyze than the analogous model involving the full gradient. In fact, to the best of our knowledge, this is the first result of this kind where one benefits from the Uhlenbeck structure for the symmetric gradient.

\section{Statement of the main result}\label{Sec2}
Our first aim is to establish results concerning properties of the problem~\eqref{elastic-g} assuming that the data satisfy~\eqref{D1}--\eqref{D3} and the nonlinear function~$\bD$ satisfies~\eqref{AE2-g}--\eqref{A1-g} (in particular,~$\bD$ is $h$-monotone).

We saw in the previous section (motivated also by the results in~\cite{BiFu02b}) that the possibility for introducing a potential~$F$ was essential in order to overcome the difficulties with linear growth. We shall therefore assume in what follows that~$\cA$ is \emph{asymptotically symmetric}, i.e., by denoting
\begin{equation}\label{Asymm}
\cA^{s}(\bT)\coloneqq \frac12 (\cA(\bT)+\cA^T(\bT)), \quad \textrm{i.e.,} \quad \cA^{s}_{i\nu k \mu}(\bT)\coloneqq\frac12 (\cA_{i\nu k \mu}(\bT)+\cA_{k\mu i\nu}(\bT)),
\end{equation}
we assume that (with $h$ as in~\eqref{A1-g})
\begin{equation}
\frac{\left|\cA^{s}(\bT)-\cA(\bT)\right|^2}{h(|\bT|)}\le \frac{C_2}{1+|\bT|}. \label{A3-g}
\end{equation}
We note at this point that, independently of the assumption~\eqref{A3-g}, the function $h$ has, as a direct consequence of hypotheses~\eqref{AE2-g}--\eqref{A1-g}, the implied asymptotic property $h(s)\,s \rightarrow 0$ as $s\rightarrow \infty$.\footnote{This can be shown by taking $\bT_1 = \bT$ and $\bT_2 = \mathbf{0}$ in
the first three displayed lines of mathematics in the proof of Lemma~\ref{basici} to deduce first that
$\int_0^\infty h(s)\,{\rm d}s < \infty$. Hence, thanks to the assumed monotonicity of $h$, {we first find} $\sum_{k=1}^{\infty} h(k) < \infty$ and {then} $h(n)\, n
{\leq 2 \sum_{k=\lfloor n/2 \rfloor}^{\infty} h(k)} \to 0$ {as $n \to \infty$}. Given any real number $s\geq 1$, again thanks to the monotonicity of $h$, we have $0 \leq h(s)\,s \leq h( \lfloor s \rfloor)\, s = h( \lfloor s \rfloor) \lfloor s \rfloor s / \lfloor s \rfloor$, and therefore $h(s)\, s \to 0$ as $s \to \infty$ thanks to $h(n)\,n\to 0$ as $n\rightarrow \infty$, with $n=\lfloor s \rfloor$, and the fact that $1\leq s/[s]<2$.} Therefore, keeping in mind that $|\cA(\bT)|$ is bounded by $C_2 (1 + |\bT|)^{-1}$ due to~\eqref{A1-g}, we observe that condition~\eqref{A3-g} is in general not implied by our previous assumptions.

The second key assumption of the paper is twofold. We shall assume either the uniform monotonicity condition~\eqref{BiFu} on~$h$, which does not require further structure; or, if~\eqref{BiFu} is not valid, then we shall require that~$\bD$ has the \emph{asymptotic Uhlenbeck structure}\footnote{Nonlinear elliptic systems of the form
\[ - \diver\, (\mathcal{B}(|\nabla \bu|) \nabla \bu) = \bef,\]
where the coefficient $\mathcal{B}$ only depends on the matrix norm of the gradient of the solution, are referred to in the literature as systems with Uhlenbeck structure~\cite{Uhl}. Equivalently, we can write
\[ - \diver \bT = \bef \quad \mbox{with $~~\bT = \mathcal{B}(|\nabla \bu|) \nabla \bu$}.\]
In the present paper we shall be, instead, concerned with problems of the form
\begin{equation}\label{star}
 - \diver \bT = \bef \quad \mbox{with $\nabla \bu = \mathcal{H}(|\bT|)\bT$}.
\end{equation}
Since our assumptions on the nonlinear function $\mathcal{A}$ will be such that they will ensure an equivalent restatement of the relationship $\nabla \bu = \mathcal{H}(|\bT|) \bT$ as $\bT = \mathcal{B}(|\nabla \bu|) \nabla \bu$, it is natural to refer to the elliptic problems~\eqref{star} as having Uhlenbeck structure. Similarly, we call also the systems with $\mathcal{B}$ depending on the matrix norm of the symmetric part of the gradient $|\beps(\bu)|$ \emph{systems with Uhlenbeck structure}.}, i.e., we shall assume that there exists a nonnegative continuous function $g \colon \mathbb{R}_+\to \mathbb{R}_+$ with
\begin{equation}
g(t) \le C_2(1+t) \qquad \textrm{for all } t \in \mathbb{R}_+ \label{A4-g}
\end{equation}
such that, for all $\bT \in \mathbb{R}^{d\times N}$, one has
\begin{equation}
\frac{|g(|\bT|)\bD(\bT)-\bT|^2}{h(|\bT|)} \le C_2(1+|\bT|^3).\label{A5-g}
\end{equation}

Under each of these two additional assumptions we can now formulate the main result of the paper.

\begin{Theorem}\label{T1-G}
Let $\Omega \subset \mathbb{R}^d$ be a bounded Lipschitz domain. Assume that the data $(\bef,\bg)$ satisfy~\eqref{D1}--\eqref{D3} and that~$\bD$ satisfies~\eqref{AE2-g}--\eqref{A1-g} {and~\eqref{A3-g}}. In addition, let either ~\eqref{BiFu}, or~\eqref{A4-g} and~\eqref{A5-g} hold. Then, there exists a triple $(\bu,\bT,\tilde{\bg})\in W^{1,\infty}(\Omega)^N \times L^1(\Omega)^{d\times N}\times (\mathcal{C}^1_0(\Gamma_N)^d)^*$ such that
\begin{align*}
\bu&=\bu_0 \quad\qquad\textrm{ on } \Gamma_D,\\
\nabla \bu &=\bD(\bT)\qquad \textrm{ in } \Omega,
\end{align*}
which solves
\begin{equation}\label{TT1-G}
\int_{\Omega}\bT\cdot \nabla \bw \ddd x = \int_{\Omega} \bef \cdot \bw\ddd x +\langle \bg-\tilde{\bg},\bw\rangle|_{\Gamma_N} \qquad \textrm{ for all } \bw \in \mathcal{C}^1_{\Gamma_D}(\overline{\Omega})^N.
\end{equation}
In particular, $\diver \bT = \bef$ in the sense of distributions. In addition, the following inequality holds:
\begin{equation}\label{TT2-G}
\int_{\Omega} \bT \cdot \nabla (\bu-\bv)\ddd x \le \int_{\Omega} \bef \cdot (\bu-\bv)\ddd x + \int_{\Gamma_N}\bg \cdot (\bu-\bv)\ddd S,
\end{equation}
for all $\bv \in W^{1,\infty}(\Omega)^N$ that are equal to $\bu_0$ on $\Gamma_D$ and satisfy, for some $\tilde{\bT}\in L^1(\Omega)^{d\times N}$, $\nabla \bv =\bD(\tilde{\bT})$ in~$\Omega$. Moreover, the triple $(\bu,\bT,\tilde{\bg})$ is unique in the class of solutions satisfying~\eqref{TT1-G},~\eqref{TT2-G} provided that either $\Gamma_D\neq \emptyset$ or the integral mean-value of~$\bu$ is fixed.

Furthermore, there exists a $\bT^s\in \mathcal{M}(\Omega \cup \overline{\Gamma}_N)^{d\times N}$ having the unique representation
$$
\langle \bT^s ,\nabla \bw\rangle = -\langle \tilde{\bg} ,\bw\rangle \quad \text{for all }\bw \in \mathcal{C}^1_{\Gamma_D}(\overline{\Omega})^N
$$
and consequently $\diver \bT^s=0$ in $\Omega$ in the sense of distributions. 

Under the assumption~\eqref{BiFu}, we have in fact $\spt |\bT^s|\subset \overline{\Gamma_N}$, and $\bT\in L^{d(2-q)/(d-2)}_{loc}(\Omega)^{d\times N}$ for $d\ge 3$ and $\bT\in L^p_{loc}(\Omega)^{2\times N}$, $p \in [1,\infty)$, for $d=2$, while under the assumptions~\eqref{A4-g} and~\eqref{A5-g}, we have
$$\spt |\bT^s|\subset \{x\in \Omega \cup \overline{\Gamma_N} \colon M(|\bT|)(x)=\infty\},$$
where $M$ denotes the maximal function, and
$$
\frac{\bT^s}{|\bT^s|} = \lim_{\varepsilon \to 0_+} \frac{\nabla \bu^{\varepsilon}}{| \nabla \bu^{\varepsilon}|} = \lim_{\varepsilon \to 0_+}\left(\frac{\bT}{|\bT|}\right)^{\varepsilon}, \qquad (\varepsilon>0)
$$
where the limit is understood in $L^1(\Omega,|\bT^s|)$ and the superscript $\varepsilon$ denotes the standard mollification.

Finally, if~$\cA$ defined in~\eqref{DefcA-g} is symmetric, then~$\bT$ and $\bT^s$ are the regular and singular parts, respectively, of the minimizer to~\eqref{minimiz5}.
\end{Theorem}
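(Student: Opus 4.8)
The plan is to construct the triple $(\bu, \bT, \tilde{\bg})$ as a limit of solutions to suitably regularized problems, and then to upgrade the information available about these limits using the renormalized regularity technique. First I would introduce a two-parameter approximation: an $\varepsilon$-regularization that restores coercivity (replacing the linear-growth relation $\nabla\bu = \bD(\bT)$ by something like $\nabla\bu = \bD(\bT) + \varepsilon\bT$, so that the approximate $\bT^\varepsilon$ lies in $L^2$), together with a Galerkin or mollification parameter $\eta$ to handle the Neumann data and the rough right-hand side; the notation $\Pn$, $\Pe$, $\Pee$ reserved at the top of the file suggests exactly such a hierarchy. For each fixed $\varepsilon$ the regularized problem is a standard monotone elliptic system and has a unique weak solution $(\bu^\varepsilon, \bT^\varepsilon)$, with $\bu^\varepsilon - \bu_0 \in W^{1,2}_{\Gamma_D}(\Omega)^N$. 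The testing with $\bu^\varepsilon - \bu_0$, combined with the safety strain condition~\eqref{proto-G2} (in the reformulated form~\eqref{G2_analytic}), with~\eqref{AE2-g} and~\eqref{AE1-g}, yields the crucial uniform bounds $\|\nabla\bu^\varepsilon\|_\infty \le C_2$ and $\|\bT^\varepsilon\|_{L^1(\Omega)} + \varepsilon\|\bT^\varepsilon\|_{L^2(\Omega)}^2 \le C$, uniformly in $\varepsilon$; this is the a~priori estimate step that Section~\ref{S3} is advertised to carry out.

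Next I would pass to the limit $\varepsilon \to 0_+$. From the bounds above, $\nabla\bu^\varepsilon \rightharpoonup^* \nabla\bu$ in $L^\infty$ and $\bT^\varepsilon \rightharpoonup^* \overline{\bT}$ in $\mathcal{M}(\Omega \cup \overline{\Gamma_N})^{d\times N}$, so one immediately obtains a measure-valued limit; the decomposition $\overline{\bT} = \bT^r + \bT^s$ into its Lebesgue-absolutely-continuous and singular parts then defines the candidates $\bT \coloneqq \bT^r$ and, via the defect in the equilibrium equation, $\tilde{\bg}$ as the element of $(\mathcal{C}^1_0(\Gamma_N)^d)^\ast$ with $\langle \bT^s, \nabla\bw\rangle = -\langle\tilde{\bg},\bw\rangle$ for all $\bw \in \mathcal{C}^1_{\Gamma_D}(\overline\Omega)^N$; testing against $\bw$ supported in $\Omega$ gives $\diver\bT^s = 0$ and hence $\diver\bT = \bef$ in $\mathcal{D}'$. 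The inequality~\eqref{TT2-G} follows from lower semicontinuity of the convex functional along the minimizing sequence together with the limiting Euler relation, exactly as in Lemma~\ref{AL2}; uniqueness of the triple (given a fixed Dirichlet part or a fixed mean value) is the content of Section~\ref{S2} and follows from strict $h$-monotonicity by the standard device of testing the difference of two solutions against the difference of the corresponding $\bu$'s.

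The genuinely hard part — and the one where the two alternative hypotheses~\eqref{BiFu} versus~\eqref{A4-g}--\eqref{A5-g} come in — is twofold: (i) identifying the nonlinear limit, i.e.\ proving $\nabla\bu = \bD(\bT)$ a.e.\ in $\Omega$ rather than merely $\nabla\bu = \bD(\overline{\bT})$ in some averaged sense, and (ii) locating $\spt|\bT^s|$. For (i) the obstacle is that $\bT^\varepsilon$ converges only weakly-$\ast$ as measures, so one cannot pass to the limit inside the nonlinearity by compactness; the renormalized regularity method of Subsect.~\ref{4.5} is designed precisely to overcome this. I would test the renormalized form of the approximate equation with truncated/nonlinear test functions of $\bT^\varepsilon$ (functions of the type $\varphi(|\bT^\varepsilon|)$ times $\bT^\varepsilon$ or times $\nabla\bw^\varepsilon$, cf.\ condition~\eqref{novyas}), use the asymptotic symmetry~\eqref{A3-g} to control the gap between $\cA$ and $\cA^s$, and thereby derive an $\varepsilon$-uniform local higher-integrability estimate on $\bT^\varepsilon$ — Caccioppoli-type, powered by the lower bound $h(|\bT|)\ge C_1(1+|\bT|^q)^{-1}$ under~\eqref{BiFu} or by the Uhlenbeck relation $g(|\bT|)\bD(\bT) \approx \bT$ under~\eqref{A5-g}. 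Under~\eqref{BiFu} this gives $\bT^\varepsilon$ bounded in $L^{d(2-q)/(d-2)}_{loc}$ (and every $L^p_{loc}$ when $d=2$), hence no interior singular part and strong local convergence, yielding $\nabla\bu = \bD(\bT)$ and $\spt|\bT^s|\subset\overline{\Gamma_N}$. Under the Uhlenbeck hypothesis, where no fixed power improvement is available, one instead obtains that $\bT^s$ can only charge the set where the maximal function $M(|\bT|)$ is infinite; the identification of the Radon--Nikod\'ym density $\bT^s/|\bT^s|$ as the mollification limit of $\nabla\bu^\varepsilon/|\nabla\bu^\varepsilon|$ then comes from the fact that, along the concentration set, $g(|\bT^\varepsilon|)\bD(\bT^\varepsilon)$ and $\bT^\varepsilon$ have the same direction in the limit. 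The final sentence, that $\bT$ and $\bT^s$ are the regular and singular parts of the minimizer of~\eqref{minimiz5} when $\cA$ is symmetric, is then immediate from Lemma~\ref{AL4}: that minimizer is unique up to the stated equivalence, its regular part satisfies $\nabla\bu = \bD(\bT^r)$ and~\eqref{nerovnost-g}, and our $(\bu,\bT)$ satisfies the same relations, so the two must coincide.
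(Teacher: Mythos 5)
Your overall architecture — regularize, obtain measure-valued limits, identify the nonlinear limit via pointwise a.e.\ convergence, prove interior regularity, locate $\spt|\bT^s|$ — is the right one, and you have correctly singled out the renormalized-regularity step as the crux. However, there are a few places where your sketch deviates from what actually works, and two of the deviations are substantive.

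First, your proposed regularization $\nabla\bu = \bD(\bT) + \varepsilon\bT$ (with $\bT^\varepsilon\in L^2$) does \emph{not} yield the uniform bound $\|\nabla\bu^\varepsilon\|_\infty\le C_2$ that you invoke. From $\|\bT^\varepsilon\|_1 + \varepsilon\|\bT^\varepsilon\|_2^2\le C$ you only get $\varepsilon\bT^\varepsilon\to 0$ in $L^2$, not in $L^\infty$, so $\nabla\bu^\varepsilon$ is not equibounded in $L^\infty$. The paper instead uses the $n$-indexed perturbation $\beps(\bu^n) = \beps^*(\bT^n) + \tfrac{\bT^n}{n(1+|\bT^n|^2)^{(n-1)/(2n)}}$, which places the approximate pair in $W^{1,n+1}\times L^{1+1/n}$ and gives only $\|\beps(\bu^n)\|_{n+1}\le C$ for finite $n$; the $L^\infty$ control on $\beps(\bu)$ is recovered only in the limit. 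This regularization is also the one for which the second-order interior estimate goes through cleanly, because the correction term has a symmetric positive-definite Jacobian $\cB^n$ with $|\cB^n(\bT)|\le C/(n(1+|\bT|^2)^{(n-1)/(2n)})$. If you insist on the linear $\varepsilon\bT$ correction you would need to recheck all of these points.

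Second, you claim that~\eqref{TT2-G} follows from lower semicontinuity and the limiting Euler relation ``exactly as in Lemma~\ref{AL2}''. That route is not available in general, because Lemma~\ref{AL2} assumes $\cA$ is \emph{symmetric} so that the potentials $F$, $F^*$ exist; Theorem~\ref{T1-G} only assumes asymptotic symmetry~\eqref{A3-g}. The paper deliberately uses a direct PDE argument instead: test the approximate equation with $\bw = \bu^n - \bv$, rewrite the constitutive relation, split $\int \bT^n\cdot(\beps(\bu^n)-\beps(\bv))$ into a monotone part plus a dominated part, and apply Fatou together with the already-established pointwise convergence $\bT^n\to\bT$ a.e.\ to pass to the liminf. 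A related point: the key interior estimate of Subsect.~\ref{4.5} is not an $L^p$ ``higher integrability'' estimate on $\bT^n$ but a weighted second-order estimate $\int_\Omega h(|\bT^n|)\,|\nabla\bT^n|^2\eta^2\,\mathrm{d}x\le C(\eta)$, obtained by testing $-\diver\bT^n=\bef$ against $-\eta^2\Delta\bu^n$ and controlling the skew-symmetric part of $\cA$ via~\eqref{A3-g}. Under~\eqref{BiFu} this does upgrade to the stated $L^{d(2-q)/(d-2)}_{loc}$ bound by Sobolev embedding; but under the Uhlenbeck hypothesis~\eqref{A4-g}–\eqref{A5-g} no such integrability improvement follows, and the role of the renormalized test functions $\tau_k(|\bT^n|)\bw$ is different — it is to show directly that the limit $\bT$ satisfies $\diver\bT=\bef$ in $\mathcal{D}'(\Omega)$ despite the failure of local weak $L^1$ compactness. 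Your sketch conflates these two steps; keeping them separate is essential because they are used in different regimes. With these corrections your outline matches the paper's proof for the (harder) symmetric-gradient case, which the paper then invokes as covering the full-gradient case of Theorem~\ref{T1-G}.
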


The second theorem that we state here concerns the limiting strain problem~\eqref{elastic}, where instead of $\nabla \bu = \bD(\bT)$, with~$\bD$ as above, we consider $\beps(\bu) = \beps^\ast(\bT)$, and where, in analogy with~$\bD$, $\beps^\ast$ is a bounded function of its argument. Although the corresponding theorem has some similarities with the previous theorem, we must take into account the fact that only the symmetric part of the gradient (of the displacement~$\bu$) appears in the equation, and therefore we have adopted in Section~\ref{Sec1} slightly different assumptions on the possible structure of~$\beps^*$ than in the case when the full gradient $\nabla \bu$ depends nonlinearly on~$\bT$, see~\eqref{AE2}--\eqref{A1}.

Similarly as above in~\eqref{Asymm} and~\eqref{A3-g}, we also introduce the symmetric part of~$\cA$:
$$
\cA^{s}(\bT)\coloneqq \frac12 (\cA(\bT)+\cA^T(\bT)), \quad \textrm{i.e.,} \quad \cA^{s}_{ijkl}(\bT)\coloneqq\frac12 (\cA_{ijkl}(\bT)+\cA_{klij}(\bT)),
$$
and assume that
\begin{equation}
\frac{\left|\cA^{s}(\bT)-\cA(\bT)\right|^2}{h(|\bT|)}\le \frac{C_2}{1+|\bT|}. \label{A3}
\end{equation}
Furthermore, we shall either assume that $h$ satisfies~\eqref{BiFu} or assume that~$\beps^*$ has asymptotic Uhlenbeck structure, i.e., we assume that there exists a nonnegative continuous function $g \colon \mathbb{R}_+\to \mathbb{R}_+$ with
\begin{equation}
g(t) \le C_2(1+t) \qquad \textrm{for all } t \in \mathbb{R}_+ \label{A4}
\end{equation}
such that, for all $\bT \in \mathbb{R}^{d\times d}_{sym}$, one has
\begin{equation}
\frac{|g(|\bT|)\,\beps^*(\bT)-\bT|^2}{h(|\bT|)} \le C_2(1+|\bT|^3).\label{A5}
\end{equation}

Our main result for the limiting strain model is then the following theorem.
\begin{Theorem}\label{T1}
Let $\Omega \subset \mathbb{R}^d$ be a bounded Lipschitz domain and assume that the data $(\bef,\bg)$ satisfy~\eqref{D1},~\eqref{proto-G2st} and~\eqref{D3}. Assume further that~$\beps^*$ satisfies~\eqref{AE2}--\eqref{A1} together with~\eqref{A3} and that either~\eqref{BiFu} holds, or that~\eqref{A4} and~\eqref{A5} hold. Then, there exists a triple $(\bu,\bT,\tilde{\bg})\in W^{1,1}(\Omega)^d\times L^1(\Omega)^{d\times d}\times (\mathcal{C}^1_0(\Gamma_N)^d)^*$ such that
\begin{alignat*}{3}
\bu& \in W^{1,p}(\Omega)^d &&\quad\textrm{ for all } p\in [1,\infty),\\
\beps(\bu)& \in L^{\infty}(\Omega)^{d\times d},\\
\bu&=\bu_0 &&\quad\textrm{ on } \Gamma_D,\\
\beps(\bu)&=\beps^*(\bT) &&\quad\textrm{ in } \Omega,
\end{alignat*}
which solves
\begin{equation}\label{TT1}
\int_{\Omega}\bT\cdot \beps(\bw)\ddd x = \int_{\Omega} \bef \cdot \bw\ddd x +\langle \bg-\tilde{\bg},\bw\rangle|_{\Gamma_N} \qquad \textrm{ for all } \bw \in \mathcal{C}^1_{\Gamma_D}(\overline{\Omega})^d.
\end{equation}
In particular, $\diver \bT = \bef$ in the sense of distributions. In addition, the following inequality holds:
\begin{equation}\label{TT2}
\int_{\Omega} \bT \cdot (\beps(\bu)-\beps(\bv))\ddd x \le \int_{\Omega} \bef \cdot (\bu-\bv)\ddd x + \int_{\Gamma_N}\bg \cdot (\bu-\bv)\ddd S,
\end{equation}
for all $\bv \in W^{1,1}(\Omega)^d$ that are equal to $\bu_0$ on $\Gamma_D$ and satisfy, for some $\tilde{\bT}\in L^1(\Omega)^{d\times d}$, $\beps(\bv)=\beps^*(\tilde{\bT})$ in $\Omega$. Moreover, the triple $(\bu,\bT,\tilde{\bg})$ is unique in the class of solutions satisfying~\eqref{TT1},~\eqref{TT2} provided that either $\Gamma_D\neq \emptyset$ or the integral mean value of~$\bu$ is fixed.

Furthermore, there exists a symmetric $\bT^s\in \mathcal{M}(\Omega\cup \overline{\Gamma_N} )^{d\times d}$ which fulfills
$$
\langle \bT^s ,\beps (\bw)\rangle = -\langle \tilde{\bg},\bw\rangle \quad \text{for all } \bw \in \mathcal{C}^1_{\Gamma_D}(\overline{\Omega})^d
$$
and consequently $\diver \bT^s=0$ in $\Omega$ in the sense of distributions. 

Under the assumption~\eqref{BiFu}, we have in fact $\spt |\bT^s|\subset \overline{\Gamma_N}$, and $\bT\in L^{d(2-q)/(d-2)}_{loc}(\Omega)^{d\times d}$ for $d\ge 3$ and $\bT\in L^p_{loc}(\Omega)^{2\times 2}$, $p \in [1,\infty)$, for $d=2$, while under the assumptions~\eqref{A4} and~\eqref{A5}, we have
$$
\spt |\bT^s|\subset \{x\in \Omega \cup \overline{\Gamma_N} \colon M(|\bT|)(x)=\infty\},
$$
where $M$ denotes the maximal function, and 
\begin{equation}
\label{eqn_bTs_characterization}
\frac{\bT^s}{|\bT^s|} = \lim_{\varepsilon \to 0_+} \frac{\beps(\bu^{\varepsilon})}{|\beps(\bu^{\varepsilon})|} = \lim_{\varepsilon \to 0_+}\left(\frac{\bT}{|\bT|}\right)^{\varepsilon}, \qquad (\varepsilon>0)
\end{equation}
where the limit is understood in $L^1(\Omega,|\bT^s|)^d$ and the superscript $\varepsilon$ denotes the standard mollification.

Finally, if~$\cA$ defined in~\eqref{DefcA} is symmetric, then~$\bT$ and $\bT^s$ are the regular and singular parts of the minimizer to~\eqref{minimiz5} with $F(\cdot)$ defined through $F(\bT)\coloneqq \int_0^1 \beps^*(t\bT) \cdot \bT \, {\rm d}t$.
\end{Theorem}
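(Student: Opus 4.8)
The plan is to prove Theorem~\ref{T1} by an approximation scheme, following the strategy used for Theorem~\ref{T1-G} but carrying the additional burden that, for the symmetric-gradient model, Korn's inequality has to replace trivial identities whenever a bound on $\beps(\bu)$ must be turned into a bound on $\nabla\bu$. \textbf{The approximate problems.} For $\varepsilon,\eta\in(0,1)$ I would mollify $\beps^*$ to a smooth map $\beps^*_\varepsilon$ retaining~\eqref{AE2}--\eqref{A1} (together with~\eqref{A3}, and, in the Uhlenbeck case,~\eqref{A4}--\eqref{A5}) up to errors vanishing with $\varepsilon$, and then set $\beps^*_{\varepsilon,\eta}(\bT):=\beps^*_\varepsilon(\bT)+\eta\bT$ (regularising the data as well, if needed, so that the Neumann term makes sense). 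The map $\beps^*_{\varepsilon,\eta}$ is a strictly monotone $\mathcal{C}^1$-diffeomorphism of $\Re^{d\times d}_{sym}$ whose inverse $\bG_{\varepsilon,\eta}$ is strictly monotone and $2$-coercive (since $\beps^*_{\varepsilon,\eta}(\bT)\cdot\bT\ge\eta|\bT|^2-C_0$), so the mixed boundary-value problem $\Pee$, namely $\int_{\Omega}\bG_{\varepsilon,\eta}(\beps(\bu))\cdot\beps(\bw)\ddd x=\int_{\Omega}\bef\cdot\bw\ddd x+\int_{\Gamma_N}\bg\cdot\bw\ddd S$ for $\bu-\bu_0\in W^{1,2}_{\Gamma_D}(\Omega)^d$, has a unique solution $\bu^{\varepsilon,\eta}$ by the Browder--Minty theorem and Korn's inequality; put $\bT^{\varepsilon,\eta}:=\bG_{\varepsilon,\eta}(\beps(\bu^{\varepsilon,\eta}))\in L^2(\Omega)^{d\times d}$, so that $-\diver\bT^{\varepsilon,\eta}=\bef$ with $\bT^{\varepsilon,\eta}\bn=\bg$ on $\Gamma_N$ and $\beps(\bu^{\varepsilon,\eta})=\beps^*_{\varepsilon,\eta}(\bT^{\varepsilon,\eta})$.

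\textbf{Uniform estimates.} Testing $\Pee$ with $\bu^{\varepsilon,\eta}-\bu_0$ and using the coercivity~\eqref{AE2} together with the safety-strain condition~\eqref{G2st_analytic} --- which is exactly what guarantees $\beps^*_{\varepsilon,\eta}(\bT)\cdot\bT-\beps(\bu_0)\cdot\bT\ge c|\bT|+\eta|\bT|^2-C$ with $c>0$ uniformly --- and absorbing the right-hand side via Korn's and Young's inequalities, I obtain $\norm{\bT^{\varepsilon,\eta}}_{L^1(\Omega)}+\eta\norm{\bT^{\varepsilon,\eta}}_{L^2(\Omega)}^2\le C$, hence $\eta\norm{\bT^{\varepsilon,\eta}}_{L^1(\Omega)}\to0$ and, since $|\beps^*_\varepsilon|\le C_2+o(1)$, $\beps(\bu^{\varepsilon,\eta})$ bounded in $L^2(\Omega)^{d\times d}$, so $\bu^{\varepsilon,\eta}$ bounded in $W^{1,2}(\Omega)^d$. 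The decisive further estimate is interior: differentiating the equation of $\Pee$ and testing with $\partial_k\bu^{\varepsilon,\eta}\zeta^2$ (through difference quotients, using $\bef\in L^2$), one arrives, for every $\Omega'\Subset\Omega$, at $\int_{\Omega'}(\nabla\bT^{\varepsilon,\eta},\nabla\bT^{\varepsilon,\eta})_{\cA_\varepsilon(\bT^{\varepsilon,\eta})}\ddd x\le C(\Omega')$ uniformly in $\varepsilon,\eta$, and in particular $\int_{\Omega'}h(|\bT^{\varepsilon,\eta}|)|\nabla\bT^{\varepsilon,\eta}|^2\ddd x\le C(\Omega')$.

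\textbf{Local integrability of $\bT$, and the main obstacle.} Under~\eqref{BiFu} the interior estimate upgrades: with $\phi$ an antiderivative of $(1+t^q)^{-1/2}$ one has $|\nabla\phi(|\bT^{\varepsilon,\eta}|)|^2\le h(|\bT^{\varepsilon,\eta}|)|\nabla\bT^{\varepsilon,\eta}|^2$, so $\phi(|\bT^{\varepsilon,\eta}|)$ is bounded in $W^{1,2}_{loc}$ and, by Sobolev embedding, $\bT^{\varepsilon,\eta}$ is bounded in $L^{d(2-q)/(d-2)}_{loc}(\Omega)^{d\times d}$ for $d\ge3$ and in every $L^p_{loc}$, $p<\infty$, for $d=2$; a uniform super-$L^1$ local bound precludes concentration of mass in $\Omega$, which will yield $\spt|\bT^s|\subset\overline{\Gamma_N}$. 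When~\eqref{BiFu} fails, $h$ carries no algebraic lower bound and this conversion is unavailable; here one exploits instead the asymptotic Uhlenbeck structure~\eqref{A4}--\eqref{A5}, by which $\bT=g(|\bT|)\beps^*(\bT)$ up to an error whose gradient is controlled locally in $L^2$ by the interior estimate, so that for $|\bT|$ large $\bu^{\varepsilon,\eta}$ solves, to leading order, a genuine Uhlenbeck system $\bT=\cB(|\beps(\bu)|)\beps(\bu)$; applying to a renormalised (truncated) version of the solution the pointwise/maximal-function regularity available for such systems --- this is precisely the method of \emph{renormalized regularity} of Subsection~\ref{4.5} --- one concludes that the possible concentration set is contained in $\{x:M(|\bT|)(x)=\infty\}$. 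This is the technical heart and the step I expect to be hardest: making the merely \emph{asymptotic} Uhlenbeck structure~\eqref{A5} and the merely asymptotic symmetry~\eqref{A3} (which absorbs the error terms stemming from the non-self-adjoint part of $\cA$) carry the classical regularity machinery, uniformly in two regularisation parameters and in the symmetric-gradient setting where only $\beps(\bu)$, not $\nabla\bu$, is controlled.

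\textbf{Passage to the limit, $\bT^s$, $\tilde{\bg}$, the inequality, uniqueness, and the variational statement.} From the $L^1$-bound I extract $\bT^{\varepsilon,\eta}\rightharpoonup^{*}\bT_{*}$ in $\mathcal{M}(\Omega\cup\overline{\Gamma_N})^{d\times d}$ (the normal-trace control inherent in $-\diver\bT^{\varepsilon,\eta}=\bef$ justifies passing up to $\overline{\Gamma_N}$); on each $\Omega'\Subset\Omega$ the local higher integrability and the bound $\int_{\Omega'}h(|\bT^{\varepsilon,\eta}|)|\nabla\bT^{\varepsilon,\eta}|^2\ddd x\le C$ give, along a subsequence, $\bT^{\varepsilon,\eta}\to\bT$ a.e.\ and in $L^1_{loc}$, so $\bT_{*}$ splits into its absolutely continuous part $\bT\in L^1(\Omega)^{d\times d}$ and a singular measure $\bT^s$. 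Since $\beps(\bu^{\varepsilon,\eta})=\beps^*_\varepsilon(\bT^{\varepsilon,\eta})+\eta\bT^{\varepsilon,\eta}$ and $\eta\norm{\bT^{\varepsilon,\eta}}_{L^1(\Omega)}\to0$, one identifies $\beps(\bu)=\beps^*(\bT)$; in particular $\beps(\bu)\in L^\infty(\Omega)^{d\times d}$ and, by Korn's inequality in $L^p$, $\bu\in W^{1,p}(\Omega)^d$ for every $p<\infty$ (the failure of the $L^\infty$ Korn inequality being exactly why $\bu\in W^{1,\infty}$ cannot be asserted here, unlike in Theorem~\ref{T1-G}). Letting $\varepsilon,\eta\to0$ in the weak formulation tested with $\bw\in\mathcal{C}^1_{\Gamma_D}(\overline{\Omega})^d$ yields $\int_{\Omega}\bT\cdot\beps(\bw)\ddd x+\langle\bT^s,\beps(\bw)\rangle=\int_{\Omega}\bef\cdot\bw\ddd x+\int_{\Gamma_N}\bg\cdot\bw\ddd S$; defining $\tilde{\bg}\in(\mathcal{C}^1_0(\Gamma_N)^d)^{*}$ by $\langle\tilde{\bg},\bw\rangle|_{\Gamma_N}:=-\langle\bT^s,\beps(\bw)\rangle$ --- well defined since $\bT^s$ integrates to zero against $\beps(\bw)$ for any $\bw$ vanishing on $\partial\Omega$, i.e.\ $\diver\bT^s=0$ in $\Omega$ --- gives~\eqref{TT1} and the stated representation of $\bT^s$. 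The inequality~\eqref{TT2} follows in the limit from the monotonicity of $\bG_{\varepsilon,\eta}$ by comparing $\bu^{\varepsilon,\eta}$ with admissible competitors $\bv$ (whose symmetric gradient is represented through $\beps^*$), using weak lower semicontinuity on the left and the identification $\beps(\bu)=\beps^*(\bT)$ on the right. Uniqueness is then the argument of Section~\ref{S2}: writing~\eqref{TT2} for one solution with the other as competitor and adding gives $\int_{\Omega}(\bT_1-\bT_2)\cdot(\beps^*(\bT_1)-\beps^*(\bT_2))\ddd x\le0$, so $\bT_1=\bT_2$ a.e.\ by strict ($h$-)monotonicity of $\beps^*$, whence $\beps(\bu_1)=\beps(\bu_2)$, then $\bu_1=\bu_2$ by rigidity (using $\Gamma_D\neq\emptyset$, or the fixed mean value of $\bu$), and finally $\tilde{\bg}_1=\tilde{\bg}_2$ from~\eqref{TT1}. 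The formula~\eqref{eqn_bTs_characterization} for $\bT^s/|\bT^s|$ comes from the convergence of $\beps(\bu^{\varepsilon,\eta})$ together with $\beps(\bu)=\beps^*(\bT)$ and~\eqref{AE2}, which forces $\beps^*(\bT)$ to align in direction with $\bT$ as $|\bT|\to\infty$, in the measure-theoretic sense carried by the recession function. Finally, when $\cA$ is symmetric the potential $F$ in~\eqref{potende} (with $\beps^*$ in place of $\bD$) exists, and by Lemma~\ref{AL4} the minimiser of~\eqref{minimiz5} satisfies~\eqref{TT1}--\eqref{TT2}; uniqueness then forces it to coincide with $(\bu,\bT,\tilde{\bg})$, so $\bT$ and $\bT^s$ are the regular and singular parts of that minimiser.
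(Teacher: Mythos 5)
Your overall architecture (regularise, derive uniform $L^1$ and interior $h$-weighted $H^1$ bounds, pass to the limit, identify the singular part, and fall back on Lemma~\ref{AL4} when $\cA$ is symmetric) mirrors the paper's, and your alternative regularisation $\beps^*_{\varepsilon,\eta}=\beps^*_\varepsilon+\eta\,\mathrm{Id}$ in place of the paper's $n$-Laplacian tail $\bT/(n(1+|\bT|^2)^{(n-1)/(2n)})$ is a plausible variant. However, there is a genuine gap at the heart of the passage-to-the-limit argument, precisely where the paper's method is most delicate.

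You assert that the interior estimate $\int_{\Omega'}h(|\bT^{\varepsilon,\eta}|)|\nabla\bT^{\varepsilon,\eta}|^2\ddd x\le C$, together with ``local higher integrability'', gives $\bT^{\varepsilon,\eta}\to\bT$ a.e.\ \emph{and in} $L^1_{\mathrm{loc}}$, and you then pass to the limit directly in $\int_\Omega\bT^{\varepsilon,\eta}\cdot\beps(\bw)\ddd x$. The $L^1_{\mathrm{loc}}$ convergence is available only when~\eqref{BiFu} holds (then the lower bound $h\ge C(1+s)^{-q}$ yields a $W^{1,2}_{\mathrm{loc}}$ bound on $(1+|\bT^n|)^{(2-q)/2}$ and hence local equi-integrability). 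Under the asymptotic Uhlenbeck structure alone no algebraic lower bound on $h$ is assumed, and the paper explicitly records that one cannot obtain even weak $L^1_{\mathrm{loc}}$ convergence of $\bT^n$. The entire point of the \emph{renormalized regularity} step (Subsection~\ref{subsection_renormalized}) is to prove $\diver\bT=\bef$ in $\Omega$ \emph{without} any $L^1_{\mathrm{loc}}$ convergence: one tests the pointwise equation with $\bw\,\tau_k(|\bT^n|)$, expresses the commutator term through $G_k(s)=\int_0^s\tau_k'g$ and the asymptotic Uhlenbeck decomposition $\bT^n = g(|\bT^n|)\beps^*(\bT^n) + (\bT^n - g(|\bT^n|)\beps^*(\bT^n))$, controls the error by the interior $h$-weighted estimate and~\eqref{A3},~\eqref{A5}, and then lets $k\to\infty$ using only $\bT\in L^1(\Omega)$. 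Your sketch inverts the logic: you invoke renormalized regularity as a device for locating $\spt|\bT^s|$ in the maximal-function level set, while simultaneously claiming strong $L^1_{\mathrm{loc}}$ convergence, which would render any such argument unnecessary and, more seriously, is not true here.

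Two further points are more than cosmetic. First, even the a.e.\ convergence $\bT^n\to\bT$ is not an immediate consequence of the $h$-weighted gradient bound, because $h$ may decay so fast that $\int h(|\bT^n|)|\nabla\bT^n|^2\le C$ gives no $W^{1,1}_{\mathrm{loc}}$ compactness for any power of $\bT^n$. The paper's trick is to introduce $\tilde h(t)=\int_t^\infty h(s)(1+s)^{-2}\ddd s$ and to show that $\bB^n:=\tilde h(|\bT^n|)\bT^n$ and $a^n:=\tilde h(|\bT^n|)$ are uniformly bounded in $W^{1,2}_{\mathrm{loc}}\cap L^\infty$, giving a.e.\ limits $\bB,a$ with $a>0$ a.e.\ (since $\tilde h^{-1}(a^n)=|\bT^n|$ is bounded in $L^1$), whence $\bT^n=\bB^n/a^n\to\bB/a$ a.e. Your proposal takes this for granted. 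Second, the interior differentiability estimate for the symmetric-gradient model is not obtained by ``testing with $\partial_k\bu\,\zeta^2$'' but by testing $-\diver\bT=\bef$ with $-\eta^2\Delta\bu$ and using the algebraic identity $\partial_{kk}u_i=2\partial_k\beps_{ik}(\bu)-\partial_i\beps_{kk}(\bu)$ to convert all gradients of $\nabla\bu$ into gradients of $\beps(\bu)$ (plus lower-order terms), after which $\partial_k\beps(\bu)=(\cA+\cB^n)\partial_k\bT$ can be introduced; only then do the conditions~\eqref{A1} and~\eqref{A3} close the estimate. Korn's inequality is used for the global a~priori estimates but does not resolve this local issue. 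Finally, the characterisation~\eqref{eqn_bTs_characterization} requires the Anzellotti-type identification (showing $\mu$ is singular and absolutely continuous w.r.t.\ $|\bT^s|$, that $\overline{\beps}\in\partial\beps^*(\mathbb{R}^{d\times d})$ $|\bT^s|$-a.e., and then Minty's method in the measure $|\bT^s|$); your appeal to ``the recession function'' does not amount to an argument and, in the merely asymptotically symmetric case, the potential $F$ and its recession function need not even exist.
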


We conclude this section by noting that although this special structure plays a crucial role in the proof, it can be relaxed to the following more general assumption. We can assume that there exist a $\mathcal{C}^1$-function $g \colon \mathbb{R}^{d\times N} \to \mathbb{R}_+$, a function $\mathcal{B}\in W^{1,\infty}(\Omega)^{d\times N \times d\times N}$ and constants $C_2 \geq C_1>0$ and $C_0 \geq 0$ such that
\begin{equation}
C_1|\bT|-C_0\le g(|\bT|) \le C_2(1+|\bT|) \qquad \textrm{for all } \bT \in \mathbb{R}^{d\times N} \label{A4MB}
\end{equation}
and, for all $\bT \in \mathbb{R}^{d\times d}_{sym}$ and almost all $x\in \Omega$, one has
\begin{equation}
\frac{|g(|\bT|)\mathcal{B}(x)\, \beps^*(\bT)-\bT|^2}{h(|\bT|)} \le C_2(1+|\bT|^3).\label{A5MB}
\end{equation}
Then all of the results stated above remain valid, with no essential changes to the proofs; for the sake of brevity we omit these proofs and will confine ourselves to some comments in Section~\ref{subsection_renormalized}.

\section{Uniqueness}\label{S2}
Here we prove the uniqueness of the triple $(\bu,\bT,\tilde{\bg})\in W^{1,1}(\Omega)^d\times L^1(\Omega)^{d\times d}\times (\mathcal{C}^1_0(\Gamma_N)^d)^*$ satisfying the properties asserted in Theorem~\ref{T1} (assuming its existence). Suppose, to this end, that $(\bu_1,\bT_1,\tilde{g}_1)$ and $(\bu_2,\bT_2,\tilde{g_2})$ are two such triples. Using~\eqref{TT2}, we obtain the following inequalities:
\begin{align}\label{TT21}
\int_{\Omega} \bT_1 \cdot (\beps(\bu_1)-\beps(\bv))\ddd x &\le \int_{\Omega} \bef \cdot (\bu_1-\bv) \ddd x + \int_{\Gamma_N}\bg \cdot (\bu_1-\bv)\ddd S,\\
\label{TT22}
\int_{\Omega} \bT_2 \cdot (\beps(\bu_2)-\beps(\bv))\ddd x &\le \int_{\Omega} \bef \cdot (\bu_2-\bv) \ddd x + \int_{\Gamma_N}\bg \cdot (\bu_2-\bv)\ddd S,
\end{align}
valid for all admissible $\bv \in W^{1,1}(\Omega)^d$ with $\bv = \bu_0$ on $\Gamma_D$ and the representation $\beps(\bv)=\beps^*(\tilde{\bT})$ for some $\tilde{\bT}\in L^1(\Omega)^{d\times d}$. Using the property, we see that we can set $\bv=\bu_1$ in~\eqref{TT22} and $\bv=\bu_2$ in~\eqref{TT21} respectively, which leads, after summing these two inequalities, to
\begin{equation*}
\int_{\Omega} (\bT_1 -\bT_2) \cdot (\beps^*(\bT_1)-\beps{^*}(\bT_2))\ddd x=\int_{\Omega} (\bT_1 -\bT_2) \cdot (\beps(\bu_1)-\beps(\bu_2))\ddd x \le 0.
\end{equation*}
Consequently, using Lemma~\ref{basici} (applied to~$\beps^*$ instead of~$\bD$), we have that $\bT_1=\bT_2$ in $\Omega$ and then necessarily also $\beps(\bu_1)=\beps(\bu_2)$. Since, by hypothesis, either $\Gamma_D$ is nonempty or $\bu_i$ has zero integral mean value, Korn's inequality leads also to $\bu_1=\bu_2$ in $\Omega$. Hence, to complete the proof it remains to discuss the behavior of $\tilde{\bg}_i$ on $\Gamma_N$. Since however we already know that $\bT_1=\bT_2$, it directly follows from~\eqref{TT1} that
\begin{equation*}
\langle \tilde{\bg}_1-\tilde{\bg}_2,\bw \rangle|_{\Gamma_N} = 0\qquad \textrm{ for all } \bw \in \mathcal{C}^1_0(\Gamma_N)^d,
\end{equation*}
and therefore $\tilde{\bg}_1=\tilde{\bg}_2$ in $(\mathcal{C}^1_0(\Gamma_N)^d)^*$.

\section{Existence}\label{S3}
Following~\cite{BMRW2013},~\cite{BMRS2014}, we introduce the following sequence of approximating problems: Find $(\bu^n,\bT^n)\in W^{1,n+1}_{\Gamma_D}(\Omega)^d\times L^{\frac{n+1}{n}}(\Omega)^{d\times d}$ with $\bT^n \in \mathbb{R}^{d\times d}_{sym}$ almost everywhere in $\Omega$ and such that
\begin{alignat}{2}\label{elasticm}
\int_{\Omega} \bT^n\cdot \beps(\bw)\ddd x&= \int_{\Omega} \bef \cdot \bw\ddd x + \int_{\Gamma_N}\bg \cdot \bw \ddd S &&\qquad\textrm{ for all } \bw \in W^{1,n+1}_{\Gamma_D}(\Omega)^d, \\
\beps(\bu^n)&=\beps^*(\bT^n) + \frac{\bT^n}{n(1+|\bT^n|^2)^{\frac{n-1}{2n}}} &&\qquad\textrm{ in } \Omega,\label{elasticm2}\\
\bu^n&=\bu_0 &&\qquad\textrm{ on } \Gamma_D.\label{elasticm3}
\end{alignat}
First, in order to ensure the meaningfulness of the expression on the right-hand side of~\eqref{elasticm}, we shall assume in what follows that $n\ge d$, and therefore $W^{1,n+1}(\Omega) \hookrightarrow \mathcal{C}(\overline{\Omega})$. It then follows from our assumptions on $\bef$ and $\bg$ that the equation~\eqref{elasticm} is meaningful. In order to show the existence of a solution to~\eqref{elasticm}--\eqref{elasticm3}, we note (using also the fact that $\beps^*(\bu)$ is monotone, see Lemma~\ref{basici}, applied to~$\beps^*$ instead of~$\bD$) that ~\eqref{elasticm2} can be restated, for each fixed $n$, in the following equivalent form:
$$
\bT^n = \bT^*_n(\beps(\bu^n)),
$$
where $\bT^*_n \colon \mathbb{R}^{d\times d}_{sym}\to \mathbb{R}^{d\times d}_{sym}$ is a continuous mapping such that, for all $\bE$, $\bE_1$, $\bE_2 \in \mathbb{R}^{d\times d}_{sym}$,
$$
\begin{aligned}
&\bT_n^*(\bE) \cdot \bE \ge -C(n)+ \tilde{C}(n)|\bE|^{n+1}, \quad |\bT_n^*(\bE)| \le C(n)(1+|\bE|^n),\\
&(\bT_n^*(\bE_1)-\bT_n^*(\bE_2))\cdot (\bE_1-\bE_2)\ge 0.
\end{aligned}
$$
Hence, the solvability of~\eqref{elasticm}--\eqref{elasticm3} follows, for any $n\ge d$ fixed, from standard monotone operator theory. Our goal is to let $n\to \infty$ in order to establish the existence of a solution to the original problem.

\subsection{First a~priori estimates}
Here, we recall some simple a~priori estimates. By noting the assumption~\eqref{proto-G2st} {and Korn's inequality}, we see that $\bw\coloneqq\bu^n-\bu_0$ is an admissible choice in~\eqref{elasticm}, and therefore we have the identity
\begin{equation*}
\int_{\Omega} \bT^n \cdot (\beps(\bu^n)-\beps(\bu_0))\ddd x = \int_{\Omega} \bef\cdot (\bu^n-\bu_0)\ddd x + \int_{\Gamma_N} \bg\cdot (\bu^n-\bu_0)\ddd S.
\end{equation*}
Next, using~\eqref{elasticm2} and the assumptions~\eqref{D1},~\eqref{D3} we arrive at the following inequality:
\begin{align}\label{kstart2}
\int_{\Omega} \frac{|\bT^n|^{1+\frac{1}{n}}}{n} + (\beps^*(\bT^n)-\beps(\bu_0))\cdot \bT^n\ddd x \le C(\bef,\bg,\bu_0)(1+\|\bu^n\|_{\mathcal{C}(\overline{\Omega})^d}).
\end{align}
We now estimate the two terms on the left-hand side. We start with the second one and note that, thanks to~\eqref{G2st_analytic}
(which is equivalent to~\eqref{proto-G2st}), there exists a constant $t_c$ such that
\begin{equation*}
(\beps^*(\bT) - \beps(\bu_0)) \cdot \bT \geq \frac{C_1 |\bT|}{2} \qquad \text{for all }|\bT|\ge t_c \text{ and a.e.~in } \Omega.
\end{equation*}
Consequently, thanks to~\eqref{AE1}, we have
\begin{equation*}
\int_\Omega (\beps^*(\bT^n)-\beps(\bu_0))\cdot \bT^n\ddd x \geq \frac{C_1}{2} \|\bT^n\|_1 - |\Omega| (C_1 + C_2 + \|\beps(\bu_0)\|_\infty)\, t_c .
\end{equation*}
Next, using~\eqref{AE1} and~\eqref{elasticm2}, we deduce the following estimate:
\begin{align}\label{kstart2.5}
|\beps(\bu^n)|\le C_2+ \frac{|\bT^n|^{\frac{1}{n}}}{n},
\end{align}
which, by Sobolev embedding, Korn's inequality and the inequality $\frac{d+1}{n} \leq 1 + \frac{1}{n}$, leads to
\begin{equation}
\label{stap_0}
\|\bu^n\|_{\mathcal{C}(\overline{\Omega})^d}^{d+1}\le C \|\beps(\bu^n)\|_{d+1}^{d+1}\le C\bigg(1+ \frac{1}{n^d}\int_{\Omega}\frac{|\bT^n|^{1+\frac{1}{n}}}{n}\ddd x\bigg).
\end{equation}
Hence, inserting the last two estimates into~\eqref{kstart2} and using Young's inequality to absorb the term on the right-hand side, we deduce, by taking $n$ sufficiently large and thereby $1/n^d$ sufficiently small, the inequality
\begin{equation}
\|\bT^n\|_1 + \frac{\|\bT^n\|_{1+\frac{1}{n}}^{1+\frac{1}{n}}}{n} \le C, \label{stap}
\end{equation}
where the constant $C$ depends only on $\Omega$, $d$, $\bf$, $\bg$, $\bu_0$, $C_1$, $C_2$ and $t_c$.
Thus, returning to~\eqref{kstart2.5}, we immediately have that
\begin{equation}\label{st2ap}
\|\beps(\bu^n)\|_{n+1} \le C,
\end{equation}
with the same dependencies of the constant~$C$.

\subsection{Limit $n\to \infty$}
It follows from~\eqref{stap},~\eqref{st2ap} and compact embedding that there exists a triple $(\bu,\bT,\overline{\bT})$ and a (sub)sequence that we do not relabel such that
\begin{alignat}{2}\label{C1}
\bT^n &\rightharpoonup^* \overline{\bT} && \qquad\textrm{ weakly-$*$ in } \mathcal{M}(\overline{\Omega})^{d\times d},\\
\bT^n & \rightharpoonup \bT && \qquad\textrm{ biting in } L^1(\Omega)^{d\times d},\label{C1.1}\\
\bu^n- {\bu_0} &\rightharpoonup \bu- {\bu_0} && \qquad\textrm{ weakly in } W^{1,d+1}_{\Gamma_D}(\Omega)^d,\label{C2}\\
\bu^n &\to \bu && \qquad\textrm{ strongly in } \mathcal{C}(\overline{\Omega})^d,\label{C2.5}\\
\frac{\bT^n}{n|\bT^n|^{1-\frac{1}{n}}} &\to \b0 &&\qquad\textrm{ strongly in } L^1(\Omega)^{d\times d}.\label{C3}
\end{alignat}
Let us recall at this stage the definition of convergence of $\bT^n$ to~$\bT$ in $L^1(\Omega)^{d\times d}$ in the weak biting sense, as the existence of an increasing sequence $(\Omega_k)_{k \in \mathbb{N}}$ of measurable subsets of $\Omega$ with $|\Omega\setminus \Omega_k|\to 0$ as $k\to \infty$ such that
\begin{equation*}
 \bT^n \rightharpoonup \bT \qquad \textrm{weakly in } L^1(\Omega_k)^{d \times d} \textrm{ for each } k \in \mathbb{N}.
\end{equation*}
With this definition and the uniform bound~\eqref{stap} at hand, the convergence~\eqref{C1.1} is in fact a direct consequence of Chacon's biting lemma, see~\cite{BaMur89}.

With the convergence~\eqref{C1}, we can now let $n\to \infty$ in~\eqref{elasticm} to deduce that
\begin{equation}
\langle \overline{\bT}, \beps (\bw)\rangle = \int_{\Omega} \bef \cdot \bw\ddd x +\langle \bg,\bw\rangle|_{\Gamma_N} \qquad \textrm{ for all } \bw \in \mathcal{C}^1_{\Gamma_D}(\overline{\Omega})^d, \label{Tbar_eq}
\end{equation}
which is~\eqref{TT1} with~$\bT$ replaced by $\overline{\bT}$ and with $\tilde{\bg} \equiv \b0$. Moreover, it directly follows from~\eqref{st2ap} and~\eqref{C2.5} that $\beps(\bu)\in L^{\infty}(\Omega)^{d\times d}$. Hence, to complete the proof, it remains to show that
\begin{equation}
\diver \overline{\bT}|_{\Omega} = \diver \bT \textrm{ in the sense of distributions},\label{RSh}
\end{equation}
that the passage from $\overline{\bT}$ to~$\bT$ in~\eqref{Tbar_eq} requires a correction via a measure $\tilde{\bg} \in (\mathcal{C}^1_0(\Gamma_N)^d)^*$ on $\Gamma_N$, and also that
\begin{equation}
\beps(\bu)=\beps^*(\bT) \textrm{ in } \Omega. \label{RShq}
\end{equation}
In addition to~\eqref{RSh}, we must be able to identify the behavior of $\overline{\bT}$ near the boundary $\Gamma_N$, where the boundary integral appears.
Therefore, in what follows we first focus on proving the pointwise convergence of $\bT^n$, i.e., that
\begin{align}\label{C1point}
\bT^n &\to \bT && \textrm{ almost everywhere in } \Omega,
\end{align}
from which, combined with~\eqref{elasticm2}, the equality~\eqref{RShq} as well as the convergence
\begin{align}\label{C1point*}
\beps(\bu^n) &\to \beps(\bu) && \textrm{ almost everywhere in } \Omega,
\end{align}
directly follow. Unfortunately, in general we will not be able to show that
\begin{align*}
\bT^n &\rightharpoonup \bT && \textrm{ weakly in } L^1_{loc}(\Omega)^{d\times d},
\end{align*}
so we shall skip this step and directly prove~\eqref{RSh}. In fact, once the pointwise convergence~\eqref{C1point} has been established, it is not difficult to show~\eqref{TT2}. Indeed, setting $\bw \coloneqq \bu^n - \bv$ in~\eqref{elasticm}, where $\bv$ is an admissible test function in~\eqref{TT2}, we obtain the identity
\begin{equation}\label{subas}
\int_{\Omega} \bT^n \cdot (\beps(\bu^n)-\beps(\bv))\ddd x = \int_{\Omega}\bef \cdot (\bu^n-\bv)\ddd x + \int_{\Omega} \bg\cdot (\bu^n-\bv)\ddd S.
\end{equation}
Hence, using~\eqref{C2.5}, we can easily let $n\to \infty$ in both terms on the right-hand side to obtain the
right-hand side of~\eqref{TT2}. In order to identify the limit also in the term on the left-hand side, we consider a $\tilde{\bT}\in L^1(\Omega)^{d \times d}$ such that $\beps(\bv)=\beps^*(\tilde{\bT})$ (the existence of such a $\tilde{\bT}$ is the assumption on admissible test functions). Then, using~\eqref{elasticm2}, we can rewrite the first term as
\begin{equation}\label{follows}
\begin{split}
\int_{\Omega} \bT^n \cdot (\beps(\bu^n)-\beps(\bv))\ddd x&=\int_{\Omega} \bT^n \cdot \bigg(\beps^*(\bT^n)+\frac{\bT^n}{n(1+|\bT|^2)^{\frac{n-1}{2n}}}-\beps(\bv)\bigg)\ddd x\\
&\ge \int_{\Omega} \bT^n \cdot \left(\beps^*(\bT^n)-\beps(\bv)\right)\ddd x\\
&= \int_{\Omega} (\bT^n-\tilde{\bT}) \cdot \left(\beps^*(\bT^n)-\beps(\bv)\right)\ddd x
+\int_{\Omega} \tilde{\bT} \cdot \left(\beps^*(\bT^n)-\beps(\bv)\right)\ddd x.
\end{split}
\end{equation}
Finally, using the monotonicity of~$\beps^*$ (recall that $\beps(\bv)=\beps^*(\tilde{\bT})$), see Lemma~\ref{basici}, the first term on the right-hand side is nonnegative and we can therefore use Fatou's lemma and~\eqref{C1point} to identify the limes inferior. The limit in the second term is the consequence of the assumption~\eqref{AE1} and Lebesgue's dominated convergence theorem. It then follows from~\eqref{follows} that
\begin{equation*}
\begin{split}
\liminf_{n\to \infty}\int_{\Omega} \bT^n \cdot (\beps(\bu^n)-\beps(\bv))\ddd x
&\ge \int_{\Omega} (\bT-\tilde{\bT}) \cdot \left(\beps^*(\bT)-\beps(\bv)\right)\ddd x
+\int_{\Omega} \tilde{\bT} \cdot \left(\beps^*(\bT)-\beps(\bv)\right)\ddd x\\
&=\int_{\Omega} \bT\cdot \left(\beps^*(\bT)-\beps(\bv)\right)\ddd x\\
&\!\!\!\!\overset{\eqref{RShq}}=\!\!\int_{\Omega} \bT \cdot \left(\beps(\bu)-\beps(\bv)\right)\ddd x.
\end{split}
\end{equation*}
Thus we can substitute the above inequality into~\eqref{subas} to deduce~\eqref{TT2}.

The essential ingredients of the proofs of the statements~\eqref{RSh} and~\eqref{C1point} are interior weighted estimates for $\nabla \bT^n$. These are established in the next subsection.
In order to show~\eqref{RSh}, a novel approach, called the (interior) \emph{renormalized} regularity of $\bT^n$, is used; see Subsection~\ref{subsection_renormalized}.

\subsection{Uniform interior higher differentiability} \label{4.5}
In this subsection, we establish uniform bounds on the solution to~\eqref{elasticm}--\eqref{elasticm3}; for the sake of simplicity, we omit writing the superscript $n$, so we replace $(\bu^n,\bT^n)$ by $(\bu,\bT)$, but we shall nevertheless trace the dependence of the bounds on $n$. First, recalling standard interior higher differentiability theory (see for example~\cite{FrNe81}), one can prove the existence of a strong solution that satisfies, pointwise,
$$
-\diver \bT = \bef \qquad \textrm{ in } \Omega.
$$
Hence, for an arbitrary $\eta \in \mathcal{C}^1_0(\Omega)$, we multiply this equation by $-\eta^2 \Delta \bu$ (which is well-defined via the regularity of~$\bT$ and identity~\eqref{elasticm2}), and after integration over $\Omega$ we obtain the following identity:
\begin{equation}\label{stkilo1}
\int_{\Omega} \diver \bT \cdot \triangle \bu\, \eta^2\ddd x = -\int_{\Omega} \bef \cdot \triangle \bu\, \eta^2\ddd x.
\end{equation}
This identity is the starting point for the analysis that follows. Henceforth, we shall use Einstein's summation convention, and any formal integration by parts that may occur in the course of the argument below will be
understood to be justified by the density of smooth functions in the relevant function space. In addition, in order to simplify the presentation, we denote $\partial_j\coloneqq\frac{\partial}{\partial x_j}$. First, we focus on the term on the left-hand side. By defining
\begin{equation}\label{GOI}
I\coloneqq \int_{\Omega}\eta^2 \partial_k \bT \cdot \partial_k \beps(\bu)\ddd x,
\end{equation}
our goal is to express the term $I$ as the left-hand side of~\eqref{stkilo1} and a certain pollution term. To do so, we integrate by parts in the term on the left-hand side of~\eqref{stkilo1} to deduce that
\begin{equation}\label{hlad}
\begin{aligned}
\tilde{I}&\coloneqq\int_{\Omega} \diver \bT \cdot \triangle \bu\, \eta^2\ddd x=\int_{\Omega} \partial_j \bT_{ij}\, \partial_{kk}\bu^i\, \eta^2\ddd x\\
&=-\int_{\Omega} \bT_{ij} \,\partial_{kkj} \bu^i\, \eta^2\ddd x-2\int_{\Omega} \eta\, \bT_{ij}\, \partial_{kk}\bu^i \, \partial_j\eta\ddd x\\
&=-\int_{\Omega} \bT_{ij}\, \partial_{kk} \beps_{ij}(\bu)\, \eta^2\ddd x-2\int_{\Omega} \eta\, \bT_{ij}\, \partial_{kk}\bu^i \,\partial_j\eta\ddd x\\
&=I+2\int_{\Omega}\eta\, \bT_{ij}\, \partial_{k} \beps_{ij}(\bu) \,\partial_k\eta\ddd x-2\int_{\Omega} \eta\, \bT_{ij} \,\partial_{kk}\bu^i \,\partial_j\eta\ddd x.
\end{aligned}
\end{equation}
Hence, inserting~\eqref{hlad} into~\eqref{stkilo1}, we get
\begin{equation}\label{stkilo2}
I=-2\int_{\Omega}\eta\, \bT_{ij}\, \partial_{k} \beps_{ij}(\bu)\, \partial_k\eta\ddd x
+2\int_{\Omega} \eta\, \bT_{ij}\, \partial_{kk}\bu^i \, \partial_j\eta\ddd x -\int_{\Omega} \bef \cdot \triangle \bu\, \eta^2\ddd x.
\end{equation}
Next, using the fact that $\partial_{kk}\bu_i = 2\partial_{k}\beps_{ik}(\bu)-\partial_i \beps_{kk}(\bu)$, we see that we can rewrite~\eqref{stkilo2} in the following, more compact, form:
\begin{equation}\label{stkilo3}
I=\int_{\Omega}\eta\, \partial_{k} \beps_{ij}(\bu)\, \bB_{ij}^k \ddd x,
\end{equation}
where
$$
\bB_{ij}^k\coloneqq-2\bT_{ij}\,\partial_k\eta+4\bT_{im}\,\partial_m \eta\, \delta_{jk}-2\bT_{km}\,\partial_m\eta\, \delta_{ij}-2\eta\,\bef^i\, \delta_{jk} + \bef^k\, \eta\, \delta_{ij}
$$
and $\delta_{ij}$ denotes the Kronecker delta. Next, we evaluate the terms on both sides of~\eqref{stkilo3} with the help of the definition of~$\cA$; see~\eqref{DefcA}. To this end we also introduce $\cB^n$ as
$$
\cB^n(\bT)\coloneqq\frac{\partial }{\partial \bT} \bigg(\frac{\bT}{n(1+|\bT|^2)^{\frac{n-1}{2n}}}\bigg).
$$
A straightforward calculation shows that
$$
\cB^n_{ijab}(\bT) = \frac{1}{n(1+|\bT|^2)^{\frac{n-1}{2n}}} \left( \delta_{ia} \, \delta_{jb} - \frac{n-1}{n} \frac{ \bT_{ij} \bT_{ab}}{1+|\bT|^2} \right),
$$
which implies that $\cB^n$ is a symmetric and positive definite operator for each $\bT\in \mathbb{R}^{d\times d}_{sym}$. Moreover, it satisfies
\begin{equation}
|\cB^n(\bT)|\le \frac{C}{n(1+|\bT|^2)^{\frac{n-1}{2n}}}.\label{cBest}
\end{equation}
Finally, using~\eqref{elasticm2} we can express the first partial derivatives of $\beps(\bu)$ as
\begin{equation*}
\partial_k \beps_{ij}(\bu)=\partial_k \beps^*_{ij}(\bT) + \partial_k \frac{\bT_{ij}}{n(1+|\bT|^2)^{\frac{n-1}{2n}}}=(\cA_{ijab}(\bT)+\cB^n_{ijab}(\bT))\,\partial_k \bT_{ab}.
\end{equation*}
Hence, returning to~\eqref{stkilo3}, and using the definition of $I$, see~\eqref{GOI}, we deduce that (note that only the symmetric part of~$\cA$ appears on the left-hand side of~\eqref{GOI})
\begin{equation*}
\begin{split}
\int_{\Omega}&(\eta\, \partial_k \bT, \eta\,\partial_k \bT)_{\cA^s(\bT)}+(\eta\, \partial_k \bT, \eta\,\partial_k \bT)_{\cB^n(\bT)}\ddd x = I \\
&= \int_{\Omega}(\cA_{ijab}(\bT)+\cB^n_{ijab}(\bT))\,\eta\,\partial_k \bT_{ab}\bB_{ij}^k\ddd x\\
&= \int_{\Omega}(\eta\, \partial_k \bT, \bB^k)_{\cA^s(\bT)}+(\eta\, \partial_k \bT, \bB^k)_{\cB^n(\bT)}\ddd x+\int_{\Omega}(\cA_{ijab}(\bT)-\cA^s_{ijab}(\bT))\,\eta\,\partial_k \bT_{ab}\,\bB_{ij}^k\ddd x.
\end{split}
\end{equation*}
Next, using the fact that $\cA^s$ is symmetric and positive definite for each~$\bT$ (see~\eqref{A1}), we see that $(\cdot,\cdot)_{\cA^s(\bT)}$ is a scalar product on $\mathbb{R}^{d\times d}_{sym}$ and the same holds true also for $(\cdot,\cdot)_{\cB^n(\bT)}$. Therefore, using the Cauchy--Schwarz inequality and Young's inequality to absorb the first integral, we see that
\begin{equation*}
\begin{split}
&\int_{\Omega}(\eta\, \partial_k \bT, \eta\,\partial_k \bT)_{\cA^s(\bT)}\ddd x \\
&\le \int_{\Omega}(\bB^k, \bB^k)_{\cA^s(\bT)}+(\bB^k, \bB^k)_{\cB^n(\bT)}\ddd x
+ 2 \int_{\Omega}|\cA(\bT)-\cA^s(\bT)| \, \eta \,|\nabla \bT|\,|\bB|\ddd x.
\end{split}
\end{equation*}
Thus, using~\eqref{A1},~\eqref{A3},~\eqref{cBest} and Young's inequality, we deduce that
\begin{equation*}
\begin{split}
\int_{\Omega}h(|\bT|)\,|\nabla \bT|^2\, \eta^2 \ddd x &\le \int_{\Omega}(\eta\, \partial_k \bT, \eta\,\partial_k \bT)_{\cA^s(\bT)}\ddd x\\
&\le C\int_{\Omega} \frac{|\bB|^2}{1+|\bT|}+ \frac{|\bB|^2}{n(1+|\bT|^2)^{\frac{n-1}{2n}}}\ddd x
+C\int_{\Omega}\frac{|\cA(\bT)-\cA^s(\bT)|^2\,|\bB|^2}{h(|\bT|)}\ddd x\\
&\le C\int_{\Omega} \frac{|\bB|^2}{1+|\bT|}+ \frac{|\bB|^2}{n(1+|\bT|^2)^{\frac{n-1}{2n}}}\ddd x.
\end{split}
\end{equation*}
Finally, using the definition of $\bB$, the above inequality reduces to
\begin{equation}
\begin{split}\label{uztoje}
&\int_{\Omega}h(|\bT|)\,|\nabla \bT|^2\, \eta^2+(\eta\, \partial_k \bT, \eta\,\partial_k \bT)_{\cA^s(\bT)}\ddd x\\
&\le C(\eta)\int_{\Omega} \frac{|\bT|^2+|\bef|^2}{1+|\bT|}+ \frac{|\bT|^2+|\bef|^2}{n(1+|\bT|^2)^{\frac{n-1}{2n}}}\ddd x\\
&\le C(\eta)\int_{\Omega}|\bef|^2 + |\bT| + \frac{1}{n}|\bT|^{1+\frac{1}{n}} \ddd x\le C(\eta),
\end{split}
\end{equation}
where the last inequality follows from the a~priori estimate~\eqref{stap} and the assumption~\eqref{D1}.

\subsection{Pointwise convergence result~\eqref{C1point}}

In this subsection, we use the bounds derived in the previous subsection. Hence, returning to our original notation $(\bu^n,\bT^n)$ for a solution to~\eqref{elasticm}, we are now interested in proving the pointwise convergence result~\eqref{C1point}. First, we introduce an auxiliary function $\tilde{h}$ as
\begin{equation}
\tilde{h}(t)\coloneqq\int_{t}^{\infty} \frac{h(s)}{(1+s)^2}\ddd s \qquad \textrm{ for all } t \in \mathbb{R}_+.
\end{equation}
We note that $\tilde{h} \colon \mathbb{R}_+\to \mathbb{R}_+$ is strictly monotonic decreasing and since $h$ is also nonincreasing, we have
$$
\tilde{h}(s)\le h(s).
$$
Then, by defining
\begin{align*}
\bB^n&\coloneqq \tilde{h}(|\bT^n|)\bT^n,\\
a^n&\coloneqq\tilde{h}(|\bT^n|),
\end{align*}
it follows from~\eqref{uztoje} and from the fact that $h(s)\le \frac{C}{1+s}$ (see \eqref{A1}) that, for all $n$ and all $\Omega_0 \subset \subset \Omega$,
$$
\|\bB^n\|_{L^\infty(\Omega)} + \|a^n\|_{L^\infty(\Omega)} + \int_{\Omega_0}|\nabla \bB^n|^2 + |\nabla a^n|^2\ddd x \le C + C\int_{\Omega_0}h(|\bT^n|)|\nabla \bT^n|^2 \ddd x\le C.
$$
Therefore, thanks to compact Sobolev embedding, there exist subsequences (not indicated) such that
\begin{alignat*}{2}
\bB^n &\rightharpoonup \bB &&\qquad\textrm{weakly in } L^1(\Omega)^{d\times d},\\
a^n &\rightharpoonup a &&\qquad\textrm{weakly in } L^1(\Omega),\\
\bB^n &\to \bB &&\qquad\textrm{strongly in } L^1(\Omega_0)^{d\times d},\\
a^n &\to a &&\qquad\textrm{strongly in } L^1(\Omega_0),\\
\bB^n & \to \bB &&\qquad\textrm{a.e. in } \Omega,\\
a^n &\to a &&\qquad\textrm{a.e. in } \Omega.
\end{alignat*}
Moreover, since $(\bT^n)_{n \in \mathbb{N}}$ is a bounded sequence in $L^1(\Omega)^{d \times d}$ (see~\eqref{stap}), we deduce that
$$
\int_{\Omega}\tilde{h}^{-1}(a^n)\ddd x\le C,
$$
where $\tilde{h}^{-1}$ denotes the nonnegative inverse function to $\tilde{h}$, which exists on $\tilde{h}(\mathbb{R}_+)$ and is decreasing and continuous. Consequently, using Fatou's lemma and the pointwise convergence of $a^n$, it follows that
$$
\tilde{h}^{-1}(a)\in L^1(\Omega) \implies a>0 ~\textrm{ a.e. in } \Omega.
$$
Here we have used that $\tilde{h}(\infty)=0$, and therefore $\tilde{h}^{-1}(0)=\infty$, which then implies that for $\tilde{h}^{-1}(a)$
to belong to $L^1(\Omega)$ it is necessary that $a>0$ a.e. on $\Omega$.

Finally, since
$$
\bT^n = \frac{\bB^n}{a^n},
$$
the above pointwise convergence result implies that
$$
\bT^n \to \tilde{\bT} ~\textrm{ a.e. in } \Omega,
$$
where
$$
\tilde{\bT}\coloneqq\frac{\bB}{a},
$$
which is a measurable function that is finite a.e. in $\Omega$. On the other hand, from the biting convergence~\eqref{C1.1} we have
weak convergence to~$\bT$ in $L^1(\Omega_k)^{d \times d}$, where $(\Omega_k)_{k \in \mathbb{N}}$ is an increasing sequence of subsets of
$\Omega$ such that $|\Omega\setminus \Omega_k|\to 0$ as $k\to \infty$. Because of the uniqueness of the limit we then have that
$$
\bT^n \to \bT ~\textrm{ a.e. in } \Omega_k \mbox{ for each $k \geq 1$}.
$$
Thanks to the properties of the sets $\Omega_k$ it then follows that
$$
\bT^n \to \bT ~\textrm{ a.e. in } \Omega.
$$
Moreover, using Fatou's lemma and~\eqref{stap}, we deduce that
\begin{equation}\label{Fat1}
\int_{\Omega}|\bT| \ddd x \le \liminf_{n\to \infty}\int_{\Omega}|\bT^n| \ddd x \le C,
\end{equation}
which completes the proof of~\eqref{C1point}.

\subsection{Uniform interior renormalized regularity of $\bT^n$}
\label{subsection_renormalized}
The next step is to strengthen~\eqref{C1point} and to obtain~\eqref{RSh}, i.e., we want to show that
\begin{equation}\label{ren5.6}
\int_{\Omega} \bT \cdot \beps (\bw) - \bef \cdot \bw \ddd x = 0 \qquad \textrm{ for all } \bw \in \mathcal{C}^1_0(\Omega)^{d}.
\end{equation}
To this end, we distinguish the cases where the lower bound~\eqref{BiFu} holds, or where the asymptotic Uhlenbeck structure~\eqref{A5} with~\eqref{A4} is available.

Let us first argue under the assumption of~\eqref{BiFu} involving a restrictive condition on $q$, namely $q < 1 + 2/d$. If $d \geq 3$, then, taking also into account~\eqref{uztoje}, we find
\begin{align*}
 \int_\Omega \big| \nabla \big( (1+ |\bT^n|)^{\frac{2-q}{2}} \big) \big|^2 \eta^2 \ddd x
 & \leq C \int_\Omega (1+ |\bT^n|)^{-q} |\nabla \bT^n|^2 \eta^2 \ddd x \\
 & \leq C C_1^{-1} \int_\Omega h(|\bT^n|) |\nabla \bT^n|^2 \eta^2 \ddd x \leq C_1^{-1} C(\eta),
\end{align*}
which first yields uniform boundedness of the sequence $(1+ |\bT^n|)^{\frac{2-q}{2}}$ in $W^{1,2}_{loc}(\Omega)$ and then, by the Rellich--Kondrashov theorem on compact embedding and after passage to a subsequence, strong convergence $\bT^n \to \bT$ in $L^1_{loc}(\Omega)^{d \times d}$ (note that, by hypothesis, $2/(2-q) < 2d/(d-2)$). In addition, using also the Gagliardo--Nirenberg continuous embedding theorem and the reflexivity of the Lebesgue spaces $L^r(\Omega)$ for $r \in (1,\infty)$, we deduce that $\bT\in L^{d(2-q)/(d-2)}_{loc}(\Omega)^{d\times d}$. If instead $q=d=2$, then we can proceed similarly and obtain that $\log (1+ |\bT^n|)$ is uniformly bounded in $W^{1,2}_{loc}(\Omega)$, which via the Trudinger--Moser inequality from~\cite{TRUDINGER67} implies that the sequence $\bT^n$ is bounded in $L^p_{loc}(\Omega)^{d \times d}$ for any $p \in [1, \infty)$, and hence, we have in particular strong convergence of $\bT^n \to \bT$ in $L^p_{loc}(\Omega)^{d \times d}$ for any $p\in [1,\infty)$. With these convergence results in hand, the claim that~\eqref{ren5.6} holds then follows immediately from the choice of the approximate solutions $\bT^n$ satisfying~\eqref{elasticm}. Moreover, we see that, under the assumption \eqref{BiFu}, the higher integrability of~$\bT$ stated in Theorem~\ref{T1} (or Theorem~\ref{T1-G}) holds. In particular, as a simple consequence, we obtain for the prototypical case \eqref{LSt2gcp} the higher integrability stated in Theorem~\ref{TH00}.

Otherwise, if we work under the assumptions~\eqref{A4} and~\eqref{A5}, we have to use more subtle arguments, which are inspired by the notion of renormalized weak solution first introduced in the present context in~\cite{BMS2014}. Thanks to the estimate~\eqref{uztoje}, we see that the weak solution of~\eqref{elasticm} is in fact the strong solution and therefore, pointwise in $\Omega$, we have
\begin{equation}\label{pointel}
-\diver \bT^n=\bef.
\end{equation}
Consequently, let $\bw \in \mathcal{C}^1_0(\Omega)^d$ and $\tau \in \mathcal{C}^1_0(\mathbb{R})$ be arbitrary. Then, by multiplying\footnote{If we were to assume the more general structure~\eqref{A4MB},~\eqref{A5MB}, we would multiply by $\tau(g(\bT^n))\bw$. The method then remains the same with only minor modifications and an adjusted definition of $G_k$ in~\eqref{def_G_k}.}~\eqref{pointel} by $\bw \tau(|\bT^n|)$ and integrating over $\Omega$, using integration by parts (note that all boundary terms vanish thanks to the assumption that $\bw$ has compact support in $\Omega$) we deduce that
\begin{equation}\label{renlim}
\int_{\Omega}\bT^n \cdot \beps(\bw)\, \tau(|\bT^n|)\ddd x = \int_{\Omega} \bef \cdot \bw\, \tau(|\bT^n|) \ddd x
-\int_{\Omega}\bT^n_{ij}\, \bw^i\, \partial_j \tau(|\bT^n|)\ddd x.
\end{equation}
Note here that since $\tau$ has compact support and the estimate~\eqref{uztoje} holds, the last integral in~\eqref{renlim} is meaningful. In addition, thanks to Lebesgue's dominated convergence theorem, using the pointwise convergence~\eqref{C1point} and the fact that $\tau$ has compact support (observe that $\tau(|\bT^n|)\,\bT^n \in L^\infty(\Omega)^{d \times N}$, uniformly w.r.t. $n$), we can let $n\to \infty$ in the first two integrals in~\eqref{renlim} to obtain the identity
\begin{equation}\label{ren2}
\int_{\Omega} \bT \cdot \beps (\bw)\, \tau(|\bT|) - \bef \cdot \bw\, \tau(|\bT|)\ddd x = -\lim_{n\to \infty} \int_{\Omega}\bT^n_{ij}\bw^i\,\partial_j \tau(|\bT^n|)\ddd x.
\end{equation}
Although we could evaluate the last term (similarly as in~\cite{BMS2014}), we have refrained from doing so here, as this is not necessary. Instead, we shall pass to the limit with $\tau$. To this end, we introduce a sequence of smooth nonincreasing functions $\tau_k \colon \mathbb{R}_+\to [0,1]$, which satisfy
$$
\tau_k(s)=\left\{\begin{aligned}
&1 &&0 \leq s\le k,\\
&0 &&s\ge 2k,
\end{aligned}
\right.
$$
and $|\tau'_k|\le \frac{C}{k}$. We then use $\tau_k$ instead of $\tau$ in~\eqref{ren2} and let $k\to \infty$. Since $\tau_k\nearrow 1$ and $\bT \in L^1(\Omega)^{d\times d}$, we can let $k \to \infty$ in the terms on the left-hand side of~\eqref{ren2} (with $\tau_k$ instead of $\tau$) to deduce that
\begin{equation}\label{ren3}
\int_{\Omega} \bT \cdot \beps (\bw) - \bef \cdot \bw \ddd x= -\lim_{k\to \infty}\lim_{n\to \infty} \int_{\Omega}\bT^n_{ij}\,\bw^i\, \partial_j \tau_k(|\bT^n|) \ddd x.
\end{equation}
Our objective now is to show that the right-hand side of~\eqref{ren3} vanishes.

The following calculations rely on the assumed asymptotic Uhlenbeck structure~\eqref{A5} with~\eqref{A4} (or~\eqref{A5MB} with~\eqref{A4MB}, respectively). First, for fixed $k,n$ we rewrite the term on the right-hand side of~\eqref{ren3} as
\begin{equation}\label{bulda}
\begin{aligned}
-\int_{\Omega}\bT^n_{ij}\,\bw^i\,\partial_j \tau_k(|\bT^n|)\ddd x&=-\int_{\Omega}(\bT^n_{ij}-g(|\bT^n|)\,\beps^*_{ij}(\bT^n))\,\bw^i\,\partial_j \tau_k(|\bT^n|)\ddd x\\
&\quad - \int_{\Omega}g(|\bT^n|)\,\beps_{ij}^*(\bT^n)\,\partial_j\tau_k(|\bT^n|)\,\bw^i \ddd x.
\end{aligned}
\end{equation}
To evaluate the second term, we introduce the new function
\begin{equation}
\label{def_G_k}
G_k(s)\coloneqq \int_0^s \tau'_k(t)g(t)\ddd t,
\end{equation}
and with the aid of this definition and integration by parts, we rewrite the second integral on the right-hand side of~\eqref{bulda} as follows:
\begin{equation}\label{bulda2}
\begin{aligned}
\int_{\Omega}&g(|\bT^n|)\,\beps_{ij}^*(\bT^n)\,\partial_j\tau_k(|\bT^n|)\,\bw^i \ddd x=\int_{\Omega}g(|\bT^n|)\,\beps_{ij}^*(\bT^n)\,\tau'_k(|\bT^n|)\,\partial_j|\bT^n|\,\bw^i \ddd x\\
&=\int_{\Omega}\beps_{ij}^*(\bT^n)\,\partial_jG_k(|\bT^n|)\,\bw^i \ddd x\\
&=-\int_{\Omega}\partial_j \beps_{ij}^*(\bT^n)\,G_k(|\bT^n|)\,\bw^i +\beps_{ij}^*(\bT^n)\,G_k(|\bT^n|)\,\partial_j\bw^i \ddd x\\
&=-\int_{\Omega}\cA^s_{ijlm}(\bT^n)\,\partial_{\ell} \bT^n_{lm}\,G_k(|\bT^n|)\,\bw^i\,\delta_{j\ell}\ddd x -\int_{\Omega}\beps_{ij}^*(\bT^n)\,G_k(|\bT^n|)\,\partial_j\bw^i \ddd x\\
&\quad -\int_{\Omega}(\cA_{ijlm}(\bT^n)-\cA^s_{ijlm}(\bT^n))\,\partial_j \bT^n_{lm}\,G_k(|\bT^n|)\,\bw^i\ddd x,
\end{aligned}
\end{equation}
where the last identity follows from the definition~\eqref{DefcA} of~$\cA$, with $\cA^s$ defined by~\eqref{Asymm}. Consequently, by substituting~\eqref{bulda2} into~\eqref{bulda} we have that
\begin{equation*}
\begin{aligned}
&\left|\int_{\Omega}\bT^n_{ij}\,\bw^i\,\partial_j \tau_k(|\bT^n|)\ddd x\right|\\
&\le \int_{\Omega}\Big(|\bT^n-g(|\bT^n|)|\,|\beps^*(\bT^n)|\,|\tau'_k(|\bT^n|)|+|\cA(\bT^n)-\cA^s(\bT^n)|\,G_k(|\bT^n|)\Big)|\bw|\,|\nabla \bT^n| \ddd x\\
&\quad +\left|\int_{\Omega}(|\bw|\,\partial_{\ell} \bT^n,\bE^{\ell}(\bw,\bT^n,\tau_k))_{\cA^s(\bT^n)}\ddd x \right| +\left|\int_{\Omega}\beps_{ij}^*(\bT^n)\,G_k(|\bT^n|)\,\partial_j\bw^i \ddd x\right|\\
&=:I^{n,k}_1+I_2^{n,k}+I_3^{n,k},
\end{aligned}
\end{equation*}
where
$$
\bE_{ij}^{\ell}(\bw,\bT^n,\tau_k)\coloneqq\frac{G_k(|\bT^n|)\bw^i \delta_{j \ell}}{|\bw|}.
$$
To proceed, we begin by noting that~\eqref{uztoje} still holds with $\eta \in \mathcal{C}^1_0(\Omega)$ replaced by $\eta \in W^{1,\infty}_0(\Omega)$, and therefore in particular with $\eta = |\bw|$ where $\bw \in \mathcal{C}^1_0(\Omega)^d$. We thus
have that
\begin{equation}
\begin{split}\label{uztojehlp}
&\int_{\Omega}h(|\bT^n|)\,|\nabla \bT^n|^2\, |\bw|^2+(|\bw|\, \partial_k \bT^n, |\bw|\,\partial_k \bT^n)_{\cA^s(\bT^n)}\ddd x\le C(\bw).
\end{split}
\end{equation}
Hence, using H\"{o}lder's inequality, we can bound $I_1^{n,k}$ as follows:
\begin{equation*}
\begin{aligned}
I_1^{n,k}&\le C\left(\int_{\Omega}|\bw|^2\,h(|\bT^n|)\,|\nabla \bT^n|^2\ddd x\right)^{\frac12}\\
&\qquad \times \left(\int_{\Omega}\frac{|\bT^n-g(|\bT^n|)\,\beps^*(\bT^n)|^2\,|\tau'_k(|\bT^n|)|^2+|\cA(\bT^n)-\cA^s(\bT^n)|^2\,G^2_k(|\bT^n|)}{h(|\bT^n|)} \ddd x\right)^{\frac12}\\
&\le C(\bw)\left(\int_{\Omega}(1+|\bT^n|^3)\,|\tau'_k(|\bT^n|)|^2+\frac{G^2_k(|\bT^n|)}{1+|\bT^n|} \ddd x\right)^{\frac12},
\end{aligned}
\end{equation*}
where the last inequality follows from~\eqref{A3},~\eqref{A5} and~\eqref{uztojehlp}. Similarly, using the Cauchy--Schwarz inequality, H\"{o}lder's inequality, the definition of $\bE^{\ell}$,~\eqref{uztojehlp} and the assumption~\eqref{A1}, we deduce that
\begin{equation*}
\begin{aligned}
I_2^{n,k}&\le \left(\int_{\Omega}(|\bw|\partial_{\ell} \bT^n,|\bw|\partial_{\ell} \bT^n)_{\cA^s(\bT^n)}\ddd x\right)^{\frac12} \left(\int_{\Omega}(\bE^{\ell}(\bw,\bT^n,\tau_k),\bE^{\ell}(\bw,\bT^n,\tau_k))_{\cA^s(\bT^n)}\ddd x \right)^{\frac12}\\
&\le C(\bw)\left(\int_{\Omega}\frac{G^2_k(|\bT^n|)}{1+|\bT^n|} \ddd x\right)^{\frac12}.
\end{aligned}
\end{equation*}
Finally, since $G_k$ is bounded for each fixed $k$ and $\tau_k$ is compactly supported, we can use Lebesgue's dominated convergence theorem and apply the pointwise convergence result~\eqref{C1point} to deduce that
$$
\begin{aligned}
&\limsup_{n \to \infty}I_1^{n,k}+I_2^{n,k}+I_3^{n,k} \\
&\le C(\bw)\bigg(\int_{\Omega}(1+|\bT|^3)\,|\tau'_k(|\bT|)|^2+\frac{G^2_k(|\bT|)}{1+|\bT|} \ddd x\bigg)^{\frac12}
+\left|\int_{\Omega}\beps_{ij}^*(\bT)\,G_k(|\bT|)\,\partial_j\bw^i \ddd x\right|\\
&\le C(\bw)\bigg(\int_{\{|\bT|>k\}}(1+|\bT|^3)\,|\tau'_k(|\bT|)|^2+\frac{G^2_k(|\bT|)}{1+|\bT|} \ddd x\bigg)^{\frac12} +C(\bw)\left|\int_{\{|\bT|>k\}}|G_k(|\bT|)| \ddd x\right|,
\end{aligned}
$$
where the second inequality is the consequence of the properties of $\tau_k$, the definition of $G_k$ and the assumption~\eqref{AE1}. Thus, using the assumption~\eqref{A4}, we see that for all $s\ge k$ we have
$$
|G_k(s)|=\int_k^s |\tau'_k(t)|\,g(t)\ddd t \le Ck^{-1}\int_k^{2k}(1+t)\ddd t \le C(1+k)\le C(1+s);
$$
substituting this bound into the above limit and using the fact $|\tau'_k(|\bT|)|\le C(1+|\bT|)^{-1}$ we deduce that
\begin{equation*}
\begin{aligned}
0 \leq \limsup_{n \to \infty}I_1^{n,k}+I_2^{n,k}+I_3^{n,k}&\le C(\bw)\int_{\{|\bT|>k\}}1+|\bT|\ddd x.
\end{aligned}
\end{equation*}
Therefore, since $\bT \in L^1(\Omega)^{d\times d}$, see~\eqref{Fat1}, we have that
\begin{equation*}
\limsup_{k\to \infty}\,\limsup_{n \to \infty}I_1^{n,k}+I_2^{n,k}+I_3^{n,k}=0.
\end{equation*}
Hence, substituting this relation into~\eqref{ren3} yields the desired identity~\eqref{ren5.6}.

\subsection{Derivation of~\eqref{TT1}}
Using the weak-$*$ density of $\mathcal{C}^1_0(\Omega)^{d}$ functions, see Lemma~\ref{dense}, we see that~\eqref{ren5.6} yields
\begin{equation}\label{ren6}
\int_{\Omega} \bT \cdot \beps (\bw) - \bef \cdot \bw \ddd x = 0 \qquad \textrm{ for all } \bw\in W^{1,1}_0(\Omega)^d; \; \beps(\bw)\in L^{\infty}(\Omega)^{d\times d}.
\end{equation}
This then leads to the definition of the normal component of the trace of~$\bT$ on $\partial \Gamma_N$ as
\begin{equation}\label{trace}
\langle \bT \bn, \bw\rangle|_{\Gamma_N}\coloneqq \int_{\Omega}\bT \cdot \beps(\tilde{\bw}) - \bef \cdot \tilde{\bw} \ddd x,
\end{equation}
for all $\bw \in \mathcal{C}_0(\Gamma_N)^d$ for which there exists an extension $\tilde{\bw}$ from $\Gamma_N$ to $\overline\Omega$ such that $\tilde{\bw} \in W^{1,1}_{\Gamma_D}(\Omega)^d$, $\beps(\tilde{\bw})\in L^{\infty}(\Omega)^{d\times d}$ and $\tilde{\bw}=\bw$ on $\Gamma_N$.
We note here that, thanks to~\eqref{ren6}, the definition of $\bT\bn$ does not depend on the choice of the extension $\tilde{\bw}$. Furthermore, since by Kirszbraun's extension theorem any Lipschitz function on $\Gamma_N$ can can be extended onto $\mathbb{R}^d$ by
preserving its Lipschitz constant, we see that $\bT\bn \in (W^{1,\infty}_0(\Gamma_N)^d)^*$ and therefore $\bT\bn \in (\mathcal{C}^1_0(\Gamma_N)^d)^*$.

Finally, we focus on the correct identifications of all limits. Comparing~\eqref{Tbar_eq} with~\eqref{ren6}, we see that
\begin{equation}\label{ren6.6}
\langle \overline{\bT}, \beps (\bw)\rangle =\int_{\Omega} \bT \cdot \beps(\bw)\ddd x \qquad \textrm{ for all } \bw \in \mathcal{C}^1_0(\Omega)^{d}.
\end{equation}
Moreover,~\eqref{Tbar_eq} and~\eqref{ren6.6} allow us to also define the trace of the measure $\overline{\bT}$ on $\Gamma_N$ as follows: {Denoting by $\mathcal{C}^1_0(\Gamma_N)^d$ the space of all functions $\bw$ for which there exists an extension $\tilde{\bw}\in \mathcal{C}^1_{\Gamma_D}(\overline{\Omega})^d$ such that $\tilde{\bw}=\bw$ on $\Gamma_N$, we may} define the distribution
\begin{equation}\label{trace3}
\langle \overline{\bT} \bn, \bw\rangle|_{\Gamma_N} \coloneqq \langle \overline{\bT}, \beps(\tilde\bw)\rangle - \int_{\Omega}\bef \cdot \tilde\bw \ddd x,
\end{equation}
{thus $\overline{\bT}\bn|_{\Gamma_N}\in (\mathcal{C}^1_0(\Gamma_N)^d)^*$}.
We note here that this definition is meaningful and does not depend on the choice of the extension~$\tilde{\bw}$. Indeed, let $\tilde{\bw}_1,\tilde{\bw}_2$ be two extensions; then, necessarily, $\tilde{\bw}_1-\tilde{\bw}_2\in \mathcal{C}^1_0(\Omega)^d$ and
$$
\begin{aligned}
&\langle \overline{\bT}, \beps(\tilde{\bw}_1)\rangle - \int_{\Omega}\bef \cdot \tilde{\bw}_1 \ddd x-\langle \overline{\bT}, \beps(\tilde{\bw}_2)\rangle + \int_{\Omega}\bef \cdot \tilde{\bw}_2 \ddd x\\
&=\langle \overline{\bT}, \beps(\tilde{\bw}_1-\tilde{\bw}_2)\rangle - \int_{\Omega}\bef \cdot (\tilde{\bw}_1-\tilde{\bw}_2) \ddd x\overset{\eqref{Tbar_eq}}=0.
\end{aligned}
$$
Finally, let us define $\tilde{\bg}\in (\mathcal{C}^1_0(\Gamma_N)^d)^*$ by
\begin{equation}\label{finmeasure}
\langle \tilde{\bg},\bw\rangle|_{\Gamma_N}\coloneqq\langle \overline{\bT}\bn - \bT\bn, \bw\rangle|_{\Gamma_N} \textrm{ for all } \bw \in \mathcal{C}^1_0(\Gamma_N)^d.
\end{equation}
Consequently, for $\bw \in \mathcal{C}^1_{\Gamma_D}(\overline{\Omega})^d$ we have
\begin{equation*}
\begin{split}
\int_{\Omega}\bT\cdot \beps(\bw)-\bef \cdot \bw \ddd x-\langle \bg-\tilde{\bg}, \bw\rangle|_{\Gamma_N}&\overset{\eqref{Tbar_eq}}=\int_{\Omega}\bT\cdot \beps(\bw)\ddd x-\langle\overline{\bT},\beps(\bw)\rangle +\langle \tilde{\bg}, \bw\rangle|_{\Gamma_N}\\
&\!\!\!\!\!\!\!\overset{\eqref{trace},\eqref{trace3}}= \langle \bT\bn - \overline{\bT}\bn,\bw \rangle|_{\Gamma_N}+\langle \tilde{\bg}, \bw\rangle|_{\Gamma_N}\\
&\overset{\eqref{finmeasure}}=0,
\end{split}
\end{equation*}
which is nothing else than~\eqref{TT1}. Hence, the proof is complete. 

\subsection{Identification of $\overline{\bT}$}
In this final part of the proof of our main theorem we provide an improved characterization of the weak-$*$ limit $\overline{\bT}$ in terms of $\beps(\bu)$; the discussion in this section was inspired by some ideas of Anzellotti, see~\cite{A1984}. We begin by decomposing $\overline{\bT}$ into its regular and singular parts, i.e.,
\begin{equation}\label{Idd1}
\overline{\bT}= \bT^r + \bT^s,
\end{equation}
where $\bT{^r} \in L^{1}(\Omega)^{d\times d}$ and $\bT^s\in \mathcal{M}(\overline{\Omega})^{d\times d}$ is a singular Radon measure supported on a set of zero Lebesgue measure. When we work under the assumption~\eqref{BiFu}, we can use the convergence $\bT^n \to \bT$ in $L^1_{loc}(\Omega)^{d \times d}$ established at the beginning of Subsect~\ref{subsection_renormalized}, which implies immediately that~$\bT^s$ can only be supported on~$\overline{\Gamma_N}$ and there is nothing to be proved for the identification of $\overline{\bT}$. Therefore, we focus in what follows on the asymptotic Uhlenbeck setting with~\eqref{A4} and~\eqref{A5}. We denote by $\mu\in \mathcal{M}(\overline{\Omega})$ a Radon measure that fulfills
\begin{equation}\label{Idd2}
\begin{split}
\bT^n\cdot \beps(\bu^n) \rightharpoonup^* \bT \cdot \beps(\bu) + \mu \quad \textrm{ weakly-$*$ in } \mathcal{M}(\overline{\Omega}).
\end{split}
\end{equation}
Note here that {the sequence $\bT^n \cdot \beps(\bu^n)$ is bounded in $L^1(\Omega)$ as a consequence of the a~priori estimates~\eqref{kstart2.5} and~\eqref{stap}, and that} the (possibly regular) measure $\mu$ is nonnegative, thanks to Fatou's lemma, the pointwise convergence of $\bT^n$ (see~\eqref{C1point}) and $\beps(\bu^n)$ (see~\eqref{C1point*}), and the boundedness from below of $\bT \cdot \beps^*(\bT)$, see~\eqref{AE2}.

Our first goal is to show that $\bT^r=\bT$ almost everywhere in $\Omega$ and that $\mu$ is a singular measure, i.e., it is supported on a set of zero Lebesgue measure. When $\mathcal{A}$ is symmetric and, consequently,~$\beps^*$ has a potential, this is a direct consequence of the inequality~\eqref{TT2}; see also the proof of Lemma~\ref{AL4} in the Appendix. In the general case (i.e., when, as is the case here, $\mathcal{A}$ is only assumed to be asymptotically symmetric), we have to use a different technique, which, in a certain sense, mimics the variational approach. Thanks to the monotonicity of $\beps^{*}$ we have that, almost everywhere in $\Omega$,
$$
\begin{aligned}
0&\le (\bT^n -\bB)\cdot \bigg(\beps^*(\bT^n)+\frac{\bT^n}{n(1+|\bT^n|^2)^{\frac{n-1}{2n}}}-\beps^*(\bB)-\frac{\bB}{n(1+|\bB|^2)^{\frac{n-1}{2n}}}\bigg)\\
&=(\bT^n -\bB)\cdot \bigg(\beps(\bu^n)-\beps^*(\bB)-\frac{\bB}{n(1+|\bB|^2)^{\frac{n-1}{2n}}}\bigg),
\end{aligned}
$$
where $\bB\in \mathcal{C}(\overline{\Omega})^{d\times d}$ is arbitrary. Thanks to~\eqref{C1point},~\eqref{C1point*}, the definition of $\mu$ and the boundedness of~$\beps^*$, we get that
\begin{align}
0&\le (\bT^n -\bB)\cdot \bigg(\beps(\bu^n)-\beps^*(\bB)-\frac{\bB}{n(1+|\bB|^2)^{\frac{n-1}{2n}}}\bigg) \rightharpoonup^* \lambda_{\bB} \quad \textrm{weakly-$*$ in } \mathcal{M}(\overline{\Omega}),\notag\\
\intertext{where}\label{lambdadef}
&0\le \lambda_{\bB}\coloneqq \bT \cdot \beps(\bu)+\mu -\bT^r \cdot \beps^*(\bB)-\bT^s \cdot \beps^*(\bB)-\bB\cdot \left(\beps(\bu)-\beps^*(\bB)\right)\quad \textrm{ in } \Omega \cup \overline{\Gamma_N}.
\end{align}
On the other hand, using~\eqref{elasticm},~\eqref{elasticm2},~\eqref{C1} and~\eqref{C2.5}, we can deduce for arbitrary $\varphi \in \mathcal{C}^{\infty}_{\Gamma_D}(\overline{\Omega})$ that
\begin{equation}\label{monotonr}
\begin{aligned}
\int_{\Omega}\bT\cdot \beps(\bu)\varphi \ddd x + \langle \mu,\varphi \rangle &= \lim_{n\to \infty}\int_{\Omega}\bT^n\cdot \beps(\bu^n)\varphi \ddd x \\
&=\lim_{n\to \infty}\int_{\Omega}\bT^n\cdot \beps(\bu^n \varphi) \ddd x-\lim_{n\to \infty}\int_{\Omega}\bT^n\cdot (\bu^n\otimes \nabla \varphi) \ddd x\\
&=\lim_{n\to \infty} \int_{\Omega}\bef \cdot \bu^n\varphi \ddd x+\int_{\Gamma_N} \bg\cdot \bu^n \varphi \ddd S -\lim_{n\to \infty}\int_{\Omega}\bT^n \cdot (\bu^n \otimes \nabla \varphi)\ddd x\\
&=\int_{\Omega}\bef \cdot \bu\,\varphi \ddd x+\int_{\Gamma_N} \bg\cdot \bu\, \varphi \ddd S -\langle \overline{\bT}, \bu \otimes \nabla \varphi\rangle.
\end{aligned}
\end{equation}
Since $\Omega$ is Lipschitz, we can use Lemma~\ref{dense} and find $\bu^{\varepsilon}\in \mathcal{C}^1_{\Gamma_D}(\overline{\Omega})^d$ such that (note that when $\varphi$ has compact support in $\Omega$ we can trivially take any $\varepsilon$-mollification of~$\bu$ as $\bu^{\varepsilon}$)
$$
\begin{aligned}
\bu^{\varepsilon}&\to \bu &&\textrm{ strongly in } \mathcal{C}(\overline{\Omega})^d \cap W^{1,1}(\Omega)^d,\\
\beps(\bu^{\varepsilon})&\rightharpoonup^* \beps(\bu) &&\textrm{ weakly-$*$ in } L^{\infty}(\Omega)^{d\times d}.
\end{aligned}
$$
Consequently, it follows from~\eqref{monotonr} and~\eqref{Tbar_eq} that
\begin{equation}\label{monotonr2}
\begin{aligned}
\int_{\Omega}\bT\cdot \beps(\bu)\varphi \ddd x + \langle \mu,\varphi \rangle &= \lim_{\varepsilon\to 0_+} \int_{\Omega}\bef \cdot \bu^{\varepsilon}\varphi \ddd x + \int_{\Gamma_N} \bg\cdot \bu^{\varepsilon}\varphi \ddd S -\langle \overline{\bT} , \bu^{\varepsilon} \otimes \nabla \varphi\rangle\\
&= \lim_{\varepsilon\to 0_+} \int_{\Omega}\overline{\bT} \cdot\beps(\bu^{\varepsilon})\varphi\ddd x \\
&= \int_{\Omega}\bT^{r}\cdot\beps(\bu)\varphi\ddd x +\lim_{\varepsilon\to 0_+} \langle \bT^s,\beps(\bu^{\varepsilon}) \varphi \rangle\\
&= \int_{\Omega}\bT^{r}\cdot\beps(\bu)\varphi\ddd x +\langle \tilde{\mu},\varphi\rangle,
\end{aligned}
\end{equation}
where $\tilde{\mu}\in \mathcal{M}(\overline{\Omega})$ is the weak-$\ast$ limit of $\bT^s\cdot\beps(\bu^{\varepsilon})$, which is necessarily absolutely continuous with respect to $|\bT^s|$. In addition, by a density argument, the above relation holds also for all $\varphi \in \mathcal{C}_{\Gamma_D}(\overline{\Omega})$. Comparing~\eqref{lambdadef} and~\eqref{monotonr2}, we see that
\begin{equation}\label{lambdaII}
0\le \lambda_{\bB}=\tilde{\mu}-\bT^s \cdot \beps^*(\bB)+\left(\bT^r-\bB\right)\cdot \left(\beps(\bu)-\beps^*(\bB)\right) \quad \textrm{ in } \Omega\cup \overline{\Gamma_N}.
\end{equation}
However, since $\tilde{\mu} $ and $|\bT^s|$ are singular measures (supported on a set of zero Lebesgue measure) and since $\beps(\bu)=\beps^*(\bT)$ in~$\Omega$, we deduce that
\begin{equation}\label{Minty3}
\begin{aligned}
(\bT^{r} -\bB)\cdot(\beps^*(\bT)-\beps^*(\bB))&\ge 0 \quad \textrm{ almost everywhere in } \Omega,\\
\tilde{\mu}-\bT^s \cdot \beps^*(\bB) & \ge 0 \quad \textrm{ $|\bT^s|$-almost everywhere in } \Omega \cup \overline{\Gamma_N}.
\end{aligned}
\end{equation}
Consequently, since $\beps^*(\bT)$ is strictly monotone, we can use Minty's method to show that
\begin{equation}
\label{key_ident}
\bT^r=\bT \quad \textrm{ almost everywhere in } \Omega,
\end{equation}
and using also~\eqref{ren6.6}, we see that
\begin{equation}
\label{key_ident2}
\diver \bT^s=0 \quad \textrm{ in } \Omega \text{ in the sense of distributions.}
\end{equation}

In addition, having the identification~\eqref{key_ident} and comparing~\eqref{lambdadef} and~\eqref{lambdaII}, we obtain that $\mu$ is absolutely continuous with respect to $\tilde{\mu}$ with
\begin{equation}
\label{muid}
\mu=\tilde{\mu} \quad \textrm{ $|\bT^s|$-almost everywhere in }\Omega \cup \overline{\Gamma_N},
\end{equation}
and hence, $\mu$ is in particular a singular measure, as was claimed.

We continue by showing further, more refined, properties of $\bT^s$. Recalling \eqref{muid} and the identification of $\tilde{\mu}\in \mathcal{M}(\overline{\Omega})$ as the weak-$\ast$ limit of $\bT^s\cdot\beps(\bu^{\varepsilon})$, we can find $\overline{\beps}$ such that
\begin{equation}\label{ks1}
\beps(\bu^{\varepsilon}) \rightharpoonup^* \overline{\beps} \quad \textrm{ weakly-$*$ in } L^{\infty}(\Omega\cup \overline{\Gamma_N}, |\bT^s|)
\end{equation}
and
\begin{equation}\label{ks2}
\mu= \frac{\bT^s}{|\bT^s|} \cdot \overline{\beps}\; |\bT^s|,
\end{equation}
where $\frac{\bT^s}{|\bT^s|}$ denotes the Radon--Nykod\'ym derivative of $\bT^s$ with respect to $|\bT^s|$. Hence, it follows from~\eqref{Minty3} that
\begin{equation}
\label{Minty4}
\frac{\bT^s}{|\bT^s|} \cdot (\overline{\beps} - \beps^*(\bB)) \ge 0 \quad \textrm{ $|\bT^s|$-almost everywhere in } \Omega\cup \overline{\Gamma_N},
\end{equation}
for any $\bB$ that is $|\bT|$-measurable.

We shall henceforth confine ourselves to the interior of $\Omega$. Therefore, in what follows, we denote
by $\bu^{\varepsilon}$ the standard mollification of~$\bu$, and we proceed as follows. Since $\beps^*(\mathbb{R}^{d\times d})$ is convex\footnote{For any $\beps_1$, $\beps_2 \in \beps^*(\mathbb{R}^{d\times d})$, i.e., $\beps_1=\beps^*(\bB_1)$ and $\beps_2=\beps^*(\bB_2)$ for certain $\bB_1\in \mathbb{R}^{d\times d}$ and $\bB_2\in \mathbb{R}^{d\times d}$, we aim to show that, for any $\lambda\in (0,1)$, there is a $\bB_3=\bB_3(\bB_1, \bB_2, \lambda)$ such that
\begin{equation}\label{f_star}
\bP(\bB_3)\coloneqq \beps^*(\bB_3) - \lambda \beps^*(\bB_1) - (1-\lambda) \beps^*(\bB_2) = \bO.
\end{equation}
This is however a consequence of Brouwer's fixed point theorem provided that $\bP(\bB_3) \cdot \bB_3 >0$ for all $\bB_3$ fulfilling $|\bB_3|=\varrho$ with some $\varrho>0$. Note that this last condition follows from noticing that
\[\bP(\bB_3) \cdot \bB_3 = \lambda (\beps^*(\bB_3)\cdot \bB_3 - \beps^*(\bB_1) \cdot \bB_3) + (1-\lambda) (\beps^*(\bB_3)\cdot \bB_3 - \beps^*(\bB_2) \cdot \bB_3)\]
and from the fact that both $\bB_1$ and $\bB_2$ satisfy the safety strain condition~\eqref{G2st_analytic}, as $\beps_1$ and $\beps_2$ belong to the interior of $\beps^*(\mathbb{R}^{d\times d})$ (cf.~the proof of Lemma~\ref{G2-equiv})}. and $\beps(\bu)\in \beps^*(\mathbb{R}^{d\times d})$ (cf.~\eqref{RShq}), we see that $\beps(\bu^{\varepsilon})\in \beps^*(\mathbb{R}^{d\times d})$ and, consequently, there exists a continuous $\bB^{\varepsilon}$ such that $\beps(\bu^{\varepsilon})=\beps^*(\bB^{\varepsilon})$. Using such a $\bB=\bB^\varepsilon$ in~\eqref{Minty4} and recalling~\eqref{ks1}, we see that
\begin{equation*}
\frac{\bT^s}{|\bT^s|} \cdot \beps(\bu^{\varepsilon}) \to \frac{\bT^s}{|\bT^s|} \cdot \overline{\beps}\quad \textrm{ strongly in } L^1(\Omega, |\bT^s|),
\end{equation*}
and therefore also,
\begin{equation}
\frac{\bT^s}{|\bT^s|} \cdot \beps(\bu^{\varepsilon}) \to \frac{\bT^s}{|\bT^s|} \cdot \overline{\beps}\quad \textrm{ strongly in } L^p(\Omega, |\bT^s|),
\label{cmeass}
\end{equation}
for all $p\in [1,\infty)$. Next, we infer from~\eqref{Minty4} that $\overline{\beps}\in \partial \beps^*(\mathbb{R}^{d\times d})$ holds $|\bT^s|$-almost everywhere; otherwise, there would exist a $\bB$ such that ~\eqref{Minty4} holds with the $\geq$ sign replaced by the $<$ sign, resulting in a contradiction. Since we have assumed the asymptotic Uhlenbeck setting~\eqref{A5}, we may now work under the assumption that $\lim_{t \rightarrow +\infty}\frac{t}{g(t)}=:\alpha>0$\footnote{In fact, relying on~\eqref{A4},~\eqref{A5},~\eqref{AE1} and the fact that $h(t) t \to 0$ as $t \to \infty$ (cf.~the comment after~\eqref{A3-g}), we first obtain
\begin{equation*}
 0 < \liminf_{t \to \infty} \tfrac{t}{g(t)} \leq \limsup_{t \to \infty} \tfrac{t}{g(t)} < \infty,
\end{equation*}
so the limit, if it exists, is positive and finite. As a consequence, we find
\begin{equation}
\label{eqn_t_gt_estimate}
 \big|\beps^* (\bT) - \tfrac{\bT}{g(|\bT|)}\big| \le \tfrac{C_2^{1/2}(1+|\bT|)}{g(|\bT|)}\, h^{1/2}(\bT) |\bT|^{1/2} = o(1) \quad \textrm{ as } |\bT|\to \infty,
\end{equation}
which, in turn, implies for each $\bT \in \mathbb{R}^{d\times d}_{sym}$ with $|\bT|=1$
\begin{equation*}
\lim_{t \to \infty} \big|\beps^* (t\bT) \cdot \bT - \tfrac{t}{g(t)}\big| = 0.
\end{equation*}
We then notice that $t \mapsto \beps^* (t\bT) \cdot \bT$ is bounded and monotone increasing by~\eqref{A1}, hence, the limit $\lim_{t \to \infty} \beps^* (t\bT) \cdot \bT$ exists (and is the same) for each $\bT \in \mathbb{R}^{d\times d}_{sym}$ with $|\bT|=1$, Thus, also the limit $\lim_{t \to \infty}\frac{t}{g(t)}$ exists as claimed.}, which implies that $\lim_{n\to \infty}\beps^*(n\bB)=\alpha \frac{\bB}{|\bB|}$ for each $\bB \in \mathbb{R}^{d\times d}_{sym}$ with $\bB \neq \b0$. Consequently, we see that $\overline{\beps}=\alpha \bT_I$ for some $\bT_{I}$ with $|\bT_{I}|=1$. Taking $\bB\coloneqq n\bB_I$ for some $\bB_I$ with $|\bB_I|=1$ in~\eqref{Minty4} and letting $n\to \infty$ we deduce that
$$
\frac{\bT^s}{|\bT^s|} \cdot \left(\bT_I - \bB_I\right) \ge 0 \qquad |\bT^s|\textrm{-almost everywhere in } \Omega,
$$
and consequently we have
\begin{equation}\label{gooddd}
\frac{\bT^s}{|\bT^s|} =\bT_I = \alpha^{-1} \overline{\beps} \qquad |\bT^s|\textrm{-almost everywhere in } \Omega.
\end{equation}
Next, we observe\footnote{To see this, we investigate the supremum of the mapping
$\bT \in \mathbb{R}^{d\times d}_{\rm sym} \mapsto|\beps^*(\bT)|^2$. Assuming first that the maximum value is attained at a point $\bT \in \mathbb{R}^{d\times d}_{\rm sym}$, we automatically deduce from the fact that~$\bT$ is then a stationary point of $|\beps^{*}(\cdot)|^2$ that
\[ 2\sum_{i,j=1}^d \beps^*_{ij}(\bT) \mathcal{A}_{ijkl} (\bT) = 0 \quad \textrm{ for all } k, l = 1, \dots, d.
\]
Multiplying this relation by $\beps^*_{kl}(\bT)$ and summing over $k,l=1, \dots, d$ we immediately obtain a
contradiction with~\eqref{A1}. Thus, the supremum of $\bT \in \mathbb{R}^{d\times d}_{\rm sym} \mapsto|\beps^*(\bT)|^2$ is attained for $|\bT|\to \infty$. In view of~\eqref{eqn_t_gt_estimate}, recalling the definition of $\alpha$, we thus have that $\lim_{|\bT|\to \infty}\beps^*(\bT) = \alpha$,
and the assertion follows.} that $|\beps(\bu)|\le \alpha$ holds for all~$\bu$ and~$\bT$ satisfying $\beps(\bu) = \beps^*(\bT)$, see~\eqref{RShq}. Thus, $|\beps(\bu^{\varepsilon})|^2 = |\beps^*(\bB^{\varepsilon})|^2 \le \alpha^2$, and we see that
\begin{equation*}
\alpha^{-1} \beps(\bu^{\varepsilon}) \to \frac{\bT^s}{|\bT^s|} \qquad \textrm{strongly in } L^2(\Omega, |\bT^s|),
\end{equation*}
which follows from the following calculations, valid for an arbitrary nonnegative $\varphi\in \mathcal{D}(\Omega)$:
$$
\begin{aligned}
&\lim_{\varepsilon\to 0_+}\int_{\Omega}\varphi \left|\beps(\bu^{\varepsilon})-\alpha \frac{\bT^s}{|\bT^s|}\right|^2 \ddd |\bT^s|=\lim_{\varepsilon\to 0_+}\int_{\Omega}\varphi \left(|\beps(\bu^{\varepsilon})|^2+\alpha^2-2\alpha \beps(\bu^{\varepsilon})\cdot \frac{\bT^s}{|\bT^s|}\right) \ddd |\bT^s|\\
&\quad \le \lim_{\varepsilon\to 0_+}2\int_{\Omega}\varphi \left(\alpha^2-\alpha \beps(\bu^{\varepsilon})\cdot \frac{\bT^s}{|\bT^s|}\right) \ddd |\bT^s| =0,
\end{aligned}
$$
where, for the last equality, we have used~\eqref{cmeass} and~\eqref{gooddd}. We thus deduce that
\[
\alpha^{-1} \beps(\bu^{\varepsilon}) \to \frac{\bT^s}{|\bT^s|} \qquad \textrm{strongly in } L^2_{\rm loc}(\Omega, |\bT^s|),
\]
and therefore, for a subsequence (not indicated),
\[
\alpha^{-1} \beps(\bu^{\varepsilon}) \to \frac{\bT^s}{|\bT^s|} \qquad \textrm{$|\bT^s|$-a.e. in $\Omega$}.
\]
As the sequence $(\alpha^{-1} \beps(\bu^{\varepsilon}))_{\varepsilon>0}$ is bounded in $L^\infty(\Omega,|\bT^s|)$, and therefore in $L^p(\Omega,|\bT^s|)$ for all $p \in [1,\infty]$, we can extract a further subsequence (not indicated), which weakly converges in $L^1(\Omega,|\bT^s|)$. Since this subsequence is also a.e. convergent in $\Omega$ with respect to the measure $|\bT^s|$, Vitali's theorem implies that
\[
\alpha^{-1} \beps(\bu^{\varepsilon}) \to \frac{\bT^s}{|\bT^s|} \qquad \textrm{strongly in } L^1(\Omega, |\bT^s|).
\]
Thus, noting once again that $(\alpha^{-1} \beps(\bu^{\varepsilon}))_{\varepsilon>0}$ is bounded in $L^\infty(\Omega,|\bT^s|)$, it
directly follows that
\begin{equation}\label{gooddd2}
\alpha^{-1} \beps(\bu^{\varepsilon}) \to \frac{\bT^s}{|\bT^s|} \qquad \textrm{strongly in } L^p(\Omega, |\bT^s|)
\end{equation}
for all $p\in [1,\infty)$. Next, we show that for any compact set $K\subset \Omega$ we have that
\begin{equation}\label{maxedf}
\spt |\bT^s| \cap K \subset \{x\in \Omega \colon M(|\beps (\bu)|)(x)=\alpha\}.
\end{equation}
Indeed, for arbitrary $\lambda\in (0,\alpha)$ we consider the set $K_{\lambda}\coloneqq \{x\in K \colon M(|\beps (\bu)|)(x)\le \alpha-\lambda\}$.
Note that it follows from the properties of the maximal function $M$ that $K_{\lambda}$ is a closed set. Moreover, we have $|\beps(\bu^{\varepsilon}(x))|\le \alpha - \lambda$ for every $x \in K_\lambda$, for sufficiently small $\varepsilon$ (strictly less than the distance of $K$ to $\partial \Omega$).
Thus, by weak-$\ast$ lower-semicontinuity (or, alternatively, by weak-$\ast$ closedness of convex closed sets) we also have that $|\bar{\beps}(x)|\le \alpha - \lambda$ for $|\bT^s|$-almost every $x \in K_{\lambda}$. On the other hand we already know that $|\bar{\beps}|= \alpha$, $|\bT^s|$-almost everywhere in $K_{\lambda}$. From this it immediately follows that $|\bT^s|(K_{\lambda})=0$. Since $\lambda \in (0,\alpha)$ was chosen to be arbitrary, we get~\eqref{maxedf}.
Thanks to the assumed asymptotic Uhlenbeck structure, we however also know that
$$
\lim_{R \rightarrow 0^+} R^{-d}\int_{B_{R}(x)} |\beps^*(\bT(y))| \ddd y = \alpha \quad \Longleftrightarrow \quad
\lim_{R \rightarrow 0^+} R^{-d}\int_{B_{R}(x)} |\bT(y)| \ddd y = +\infty.
$$
Consequently, we conclude from~\eqref{maxedf} that, for each compact set $K\subset \Omega$, we have
$$\spt |\bT^s| \cap K \subset \{x\in \Omega \colon M(|\bT|)(x)=\infty\}.$$

Finally, it is not difficult to deduce that
\begin{equation}\label{gooddd3}
\left(\frac{\bT}{|\bT|}\right)^{\varepsilon} \to \frac{\bT^s}{|\bT^s|} \qquad \textrm{strongly in } L^p(\Omega, |\bT^s|).
\end{equation}
The proof of this statement proceeds as follows. We begin by noting that
\begin{align*}
\left|\left(\alpha\frac{\bT}{|\bT|}\right)^{\varepsilon} - \beps(\bu^{\varepsilon})\right|
 & \le \left|\left(\alpha\frac{\bT}{|\bT|}-\frac{\bT}{g(|\bT|)}\right)^{\varepsilon}\right|+\left|\left(\frac{\bT}{g(|\bT|)} - \beps^*(\bT)\right)^{\varepsilon}\right| \\
 & \le \left|\left(\alpha-\frac{|\bT|}{g(|\bT|)}\right)^{\varepsilon}\right|
 + C_2^{1/2} \left(\left(h(|\bT|) (1+ |\bT|^3) g(|\bT|)^{-2} \right)^{\varepsilon}\right)^{\frac12},
\end{align*}
where we have used~\eqref{A5} and Jensen's inequality. Hence, using the fact that $|\bT^s|$ is supported only
on the set where $M(|\bT|)=\infty$, the definition of $\alpha$ and that $h(s)s \to 0 $ as
$s \to \infty$, we obtain
$$
\int_{\Omega} \left|\left(\alpha\frac{\bT}{|\bT|}\right)^{\varepsilon} - \beps(\bu^{\varepsilon})\right|\ddd |\bT^s| \to 0
$$
as $\varepsilon \to 0_+$. Consequently,~\eqref{gooddd3} follows from~\eqref{gooddd2}, which then finally completes the
proof of our main theorem, Theorem~\ref{T1}.

\begin{appendix}

\section{Tools}\label{Saux}
We complete the paper by recalling the properties of~$\cA$,~$F$ and~$F^*$ introduced in Section~\ref{Sec1}. Although such properties are easy consequences of the assumptions, we provide the detailed proofs in our setting. Furthermore, we provide the reformulation of the safety strain conditions~\eqref{proto-G2} and \eqref{proto-G2st} in the analytic forms~\eqref{G2_analytic} and~\eqref{G2st_analytic}, respectively. Finally, we state a density result for smooth, compactly supported functions.

\begin{Lemma}\label{basici}
Let~$\cA$ satisfy~\eqref{AE2-g} and~\eqref{A1-g}. Then, for all $\bT_1, \bT_2 \in \mathbb{R}^{d\times N}$ one has the following inequality:
\begin{align}
&(\bD(\bT_1) - \bD(\bT_2))\cdot (\bT_1-\bT_2)\ge h(|\bT_1|+|\bT_2|)|\bT_1-\bT_2|^2.\label{subst2}
\end{align}
Moreover, the set $\overline{\bD(\mathbb{R}^{d\times N})}$ is convex.
In addition, if~$\cA$ is symmetric then $F \colon \mathbb{R}^{d\times N}\to \mathbb{R}_+$ defined by~\eqref{potende} satisfies~\eqref{derpot} and is strictly convex. Furthermore,~$F^*$ defined in~\eqref{conjug} is strictly convex on $\bD(\mathbb{R}^{d\times N})$ and satisfies~\eqref{propFstar} and~\eqref{derpot2}.
\end{Lemma}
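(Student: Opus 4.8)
The plan is to establish the statements in the order they are listed, since each one feeds into the next. First I would prove the monotonicity estimate~\eqref{subst2}. The idea is the fundamental theorem of calculus along the segment joining $\bT_2$ to $\bT_1$: writing $\bD(\bT_1)-\bD(\bT_2)=\int_0^1 \frac{\rm d}{{\rm d}t}\bD(\bT_2+t(\bT_1-\bT_2))\,{\rm d}t = \int_0^1 \cA(\bT_2+t(\bT_1-\bT_2))[\bT_1-\bT_2]\,{\rm d}t$, one dots with $\bT_1-\bT_2$ to get
\begin{equation*}
(\bD(\bT_1)-\bD(\bT_2))\cdot(\bT_1-\bT_2)=\int_0^1 (\bT_1-\bT_2,\bT_1-\bT_2)_{\cA(\bT_2+t(\bT_1-\bT_2))}\,{\rm d}t.
\end{equation*}
Now apply the lower bound in~\eqref{A1-g}: the integrand is at least $h(|\bT_2+t(\bT_1-\bT_2)|)|\bT_1-\bT_2|^2\ge h(|\bT_1|+|\bT_2|)|\bT_1-\bT_2|^2$, using that $|\bT_2+t(\bT_1-\bT_2)|\le |\bT_1|+|\bT_2|$ for $t\in[0,1]$ and that $h$ is nonincreasing. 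Integrating over $t\in[0,1]$ yields~\eqref{subst2}. (Note the quadratic form in~\eqref{A1-g} is controlled through its symmetric part only, which is exactly what appears here since the expression $(\bB,\bB)_{\cA(\bT)}$ is symmetric in the $\bB$ slots by construction.) As a byproduct, taking $\bT_1=\bT$, $\bT_2=\mathbf 0$ and using the upper bound in~\eqref{A1-g} together with~\eqref{AE1-g} gives $\int_0^\infty h(s)\,{\rm d}s<\infty$, the fact invoked in the footnote after~\eqref{A3-g}.

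Next, convexity of $\overline{\bD(\mathbb{R}^{d\times N})}$. Since $\bD$ is $\mathcal C^1$ with $\frac{\partial\bD}{\partial\bT}=\cA$ positive definite (lower bound in~\eqref{A1-g}), $\bD$ is a strictly monotone, continuous, and in fact (by~\eqref{AE2-g}) coercive map $\mathbb{R}^{d\times N}\to\mathbb{R}^{d\times N}$; hence by Browder--Minty it is a bijection onto $\mathbb{R}^{d\times N}$? No — that is false here because $\bD$ is bounded by~\eqref{AE1-g}, so the range is a bounded set and cannot be all of $\mathbb{R}^{d\times N}$. Instead I would argue directly: given $\bB_1=\bD(\bT_1)$, $\bB_2=\bD(\bT_2)$ in the range and $\lambda\in(0,1)$, I want $\lambda\bB_1+(1-\lambda)\bB_2\in\overline{\bD(\mathbb R^{d\times N})}$. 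This follows from a Brouwer fixed-point / degree argument of exactly the type spelled out in the footnote to~\eqref{f_star}: consider $\bP(\bT):=\bD(\bT)-\lambda\bB_1-(1-\lambda)\bB_2$ and show $\bP(\bT)\cdot\bT>0$ on $|\bT|=\varrho$ for $\varrho$ large, using $\bD(\bT)\cdot\bT\ge C_1|\bT|-C_0$ from~\eqref{AE2-g} while $|\lambda\bB_1+(1-\lambda)\bB_2|\le C_2$ by~\eqref{AE1-g}; then Brouwer (or the Poincaré--Miranda / topological degree argument) gives a zero of $\bP$ in $\{|\bT|\le\varrho\}$, i.e.\ the convex combination lies in the range itself. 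Taking closures and using that a set whose all pairwise open segments meet the closure is convex-closed, $\overline{\bD(\mathbb R^{d\times N})}$ is convex. (Alternatively, once the potential $F$ is available in the symmetric case, $\overline{\bD(\mathbb R^{d\times N})}=\overline{\mathrm{dom}\,\partial F}$ is convex because it is the effective domain of the subdifferential of the convex function $F^*$; but since convexity of the range is asserted without the symmetry hypothesis, the Brouwer argument is the right one.)

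For the remaining statements assume $\cA$ symmetric. That $F$ in~\eqref{potende} satisfies $\partial F/\partial\bT_{i\nu}=\bD_{i\nu}$ is a direct differentiation under the integral sign, where the symmetry $\cA_{i\nu j\mu}=\cA_{j\mu i\nu}$ is precisely what is needed to collapse the resulting expression: differentiating $\int_0^1 \bD(t\bT)\cdot\bT\,{\rm d}t$ in $\bT_{i\nu}$ gives $\int_0^1 \bD_{i\nu}(t\bT)\,{\rm d}t+\int_0^1 t\,\cA_{j\mu i\nu}(t\bT)\bT_{j\mu}\,{\rm d}t$, and by symmetry the second integrand equals $t\frac{\rm d}{{\rm d}t}\big(\bD_{i\nu}(t\bT)\big)$, so an integration by parts turns the whole thing into $\bD_{i\nu}(\bT)$. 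Strict convexity of $F$ then follows from $\nabla^2 F=\cA$ being positive definite, or directly from~\eqref{subst2} applied to $\nabla F=\bD$. For $F^*$: the first line of~\eqref{propFstar} ($F^*\equiv+\infty$ off $\overline{\bD(\mathbb R^{d\times N})}$) follows because for $\bB\notin\overline{\bD(\mathbb R^{d\times N})}$ — a closed convex set by the previous paragraph — one can separate $\bB$ from it by a hyperplane, and the linear growth of $F$ (from~\eqref{AE1-g}: $F(\bT)\le C_2|\bT|$) lets the supremum $\sup_\bT\{\bB\cdot\bT-F(\bT)\}$ run off to $+\infty$ along the separating direction. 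For $\bB=\bD(\bT_0)\in\bD(\mathbb R^{d\times N})$, the supremum defining $F^*(\bB)$ is attained exactly where $\bB=\nabla F(\bT)=\bD(\bT)$, i.e.\ at $\bT=\bD^{-1}(\bB)=\bT_0$ (unique by strict monotonicity of $\bD$), giving the Legendre identity in the second line of~\eqref{propFstar}; differentiating this identity, or invoking the standard fact that $\nabla F^*=(\nabla F)^{-1}$ on the range, gives~\eqref{derpot2}. Strict convexity of $F^*$ on $\bD(\mathbb R^{d\times N})$ is then immediate from $\nabla F^*=\bD^{-1}$ being strictly monotone there, which is the inverse of the strictly monotone map $\bD$.

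The main obstacle is the convexity of $\overline{\bD(\mathbb R^{d\times N})}$ in the absence of a potential: one cannot simply say "$\bD$ is surjective" (it is bounded), and one must run the topological (Brouwer/degree) argument carefully, checking the sign condition $\bP(\bT)\cdot\bT>0$ on large spheres from~\eqref{AE2-g}--\eqref{AE1-g}; everything else is routine convex analysis once~\eqref{subst2} and this convexity are in hand.
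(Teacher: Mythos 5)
Your treatment of~\eqref{subst2} via the fundamental theorem of calculus along the segment joining $\bT_2$ to $\bT_1$ together with the monotonicity of $h$, your derivation of~\eqref{derpot} by differentiating under the integral and exploiting the symmetry of $\cA$, the strict convexity of $F$ from~\eqref{subst2}, and the Legendre identities~\eqref{propFstar}--\eqref{derpot2} all agree in substance with the paper's proof. Your hyperplane-separation argument for $F^*(\bB)=+\infty$ when $\bB\notin\overline{\bD(\mathbb{R}^{d\times N})}$ is a clean alternative to the paper's route, which instead locates $\bT_0$ with $\bD(\bT_0)=\b0$ by a Brouwer argument and then tracks the blow-up of $\bD^{-1}(t\bB)$ as $t$ increases to the exit parameter $\lambda<1$; both are correct once convexity of the closure of the range is in hand.

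The gap is in the convexity of $\overline{\bD(\mathbb{R}^{d\times N})}$. Your sphere condition for Brouwer is estimated as
\[
\bP(\bT)\cdot\bT \;=\; \bD(\bT)\cdot\bT - \big(\lambda\bB_1+(1-\lambda)\bB_2\big)\cdot\bT \;\ge\; C_1|\bT|-C_0 - C_2|\bT|,
\]
but, as the Remark following~\eqref{D2_simple} points out, hypotheses~\eqref{AE2-g}--\eqref{AE1-g} force $C_2\ge C_1$, so the right-hand side is at best $-C_0$ and in general tends to $-\infty$; the sign condition on large spheres fails. The footnote to~\eqref{f_star}, which you cite as the model, does not estimate the two terms separately. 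It first rewrites
\[
\bP(\bT)\cdot\bT \;=\; \lambda\big(\bD(\bT)-\bD(\bT_1)\big)\cdot\bT \;+\; (1-\lambda)\big(\bD(\bT)-\bD(\bT_2)\big)\cdot\bT,
\]
and then uses that each $\bB_i=\bD(\bT_i)$ is an \emph{interior} point of the range (the range is open, since $\cA$ is positive definite in the sense of~\eqref{A1-g}, hence invertible as a linear map, so $\bD$ is a local diffeomorphism), whence by Lemma~\ref{G2-equiv} the safety condition~\eqref{G2_analytic} holds with $\bD(\bT_i)$ in place of $\nabla\bu_0$; each bracket is then bounded below by $c\,|\bT|$ with some $c>0$ for $|\bT|$ large. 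That is the form the Brouwer argument must take if one wishes to avoid a potential. The paper sidesteps the matter entirely by citing Rockafellar's theorem on the convexity of the closure of the range of a maximal monotone operator (Theorem~2 of~\cite{Ro70}), applicable here since $\bD$ is continuous and, by~\eqref{subst2}, monotone. Your ``alternatively'' route via $\mathrm{dom}\,\partial F^*$ is sound but, as you yourself note, only available under symmetry of $\cA$, which the lemma does not assume for this part of the statement.
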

\begin{proof}
First, we focus on~\eqref{subst2}. Using the definitions of~$\cA$ and $(\cdot,\cdot)_{\cA}$ (cf.~\eqref{DefcA-g} and~\eqref{A1-g}) and the lower bound from assumption~\eqref{A1-g}, we have
\begin{equation*}
\begin{aligned}
(\bD(\bT_1) - \bD(\bT_2))\cdot (\bT_1-\bT_2)
&\quad =(\bT_1-\bT_2)\cdot \int_{0}^1 \frac{\ddd}{\ddd s} \bD(\bT_2+s(\bT_1-\bT_2))\ddd s\\
&\quad =\int_{0}^1 (\bT_1-\bT_2,\bT_1-\bT_2)_{\cA(\bT_2+s(\bT_1-\bT_2))}\ddd s\\
&\quad \ge \int_{0}^1 h(|\bT_2+s(\bT_1-\bT_2)|) |\bT_1-\bT_2|^2\ddd s\\
&\quad \ge h(|\bT_2|+|\bT_1|)|\bT_1-\bT_2|^2,
\end{aligned}
\end{equation*}
where the last inequality follows from the fact that $h$ is nonincreasing. The convexity of the set $\overline{\bD(\mathbb{R}^{d\times N})}$ then follows from~\cite{Ro70}\footnote{In fact, in~\cite{Ro70}, Rockafellar works in the more general context of maximal monotone operators, which covers our case here since~$\bD$ is monotone and continuous. In particular, the convexity of the closure of $\bD(\mathbb{R}^{d \times N})$ follows from Theorem 2 in~\cite{Ro70}.}. Next, we verify the formula~\eqref{derpot}. Thanks to first the assumed symmetry property $\cA_{j\mu i\nu } = \cA_{i\nu j \mu}$ for all $i,j=1,\ldots, d$ and $\nu,\mu=1,\ldots, N$, we have that
\begin{equation*}
\begin{aligned}
\frac{\partial F(\bT)}{\partial \bT_{i\nu}} & = \int_0^1 \bD_{i\nu}(t\bT) + \sum_{j=1}^d \sum_{\mu=1}^N \frac{\partial \bD_{j\mu}(t\bT)}{\partial \bT_{i\nu}} \bT_{j\mu} t \ddd t\\
& = \int_0^1 \bD_{i\nu}(t\bT) + \sum_{j=1}^d \sum_{\mu=1}^N \frac{\partial \bD_{i\nu}(t\bT)}{\partial \bT_{j\mu}} \bT_{j\mu} t \ddd t
= \int_0^1 \frac{\ddd}{\ddd t} \left(t \bD_{i\nu}(t \bT)\right) \ddd t = \bD_{i\nu}(\bT).\\
\end{aligned}
\end{equation*}

With this identification, the strict convexity of~$F$ follows directly from~\eqref{subst2}. Consequently, this yields the strict convexity of~$F^*$ on (the open set) $\bD(\mathbb{R}^{d\times N})$, and the identity~\eqref{derpot2} is a standard result from convex analysis, cf. \cite[Chapter I.5]{EKETEM99}. To verify also~\eqref{propFstar} in detail, let us first consider $\bB \notin \overline{\bD(\mathbb{R}^{d\times N})}$. We start by observing that thanks to~\eqref{AE2-g} and applying Corollary 1.1 to Brouwer's fixed point theorem on p.~279 in~\cite{GR}, one has $\bD(\bT_0) = \b0$ for some $\bT_0 \in \mathbb{R}^{d\times N}$.
Then, thanks to the convexity of the set $\overline{\bD(\mathbb{R}^{d\times N})}$, there exists a $\lambda \in (0,1)$ such that
$$
t\bB \in \bD(\mathbb{R}^{d\times N}) \textrm{ for all } t\in [0,\lambda)
$$
and $\lambda \bB \in \partial \bD(\mathbb{R}^{d\times N})$. Consequently, there exists a sequence $\bT_t$ such that $t\bB = \bD(\bT_t)$ for $t \in [0,\lambda)$, with $|\bT_t| \to \infty$ as $t \to \lambda$. Thus, using the definition of~$F^*$ (cf.~\eqref{conjug}) and noting that, thanks to the convexity of~$F$, $F(\bT_0) \geq F(\bT_t) + \bD(\bT_t)\cdot(\bT_0 - \bT_t)$, we have by~\eqref{AE2-g} and~\eqref{AE1-g} that
$$
F^*(\bB)\ge \bB \cdot \bT_t - F(\bT_t)=\frac{1-t}{t} \bD(\bT_t) \cdot \bT_t +\bD(\bT_t)\cdot \bT_t -F(\bT_t)\ge \frac{1-t}{t} (C_1 |\bT_t|-C_0) - F(\bT_0) - C_2|\bT_0|.
$$
Finally, letting $t\to \lambda <1$, we get $F^*(\bB)=\infty$. Similarly, if $\bB = \bD(\bT_B)$ for some $\bT_B \in \mathbb{R}^{d\times N}$, we deduce by the convexity of~$F$ that
$$
\bB \cdot \bT - F(\bT)= \bD(\bT_B) \cdot \bT - F(\bT) \le \bD(\bT_B) \cdot \bT_B -F(\bT_B)
$$
for every $\bT \in \mathbb{R}^{d\times N}$. Hence, the second identity in~\eqref{propFstar} follows by setting $\bT=\bT_B$ in the definition of~$F^*$.
\end{proof}

\begin{Lemma}
\label{G2-equiv}
If $\bD \colon \mathbb{R}^{d\times N} \to \mathbb{R}^{d\times N}$ satisfies~\eqref{AE2-g},~\eqref{AE1-g} and~\eqref{A1-g}, then the conditions~\eqref{proto-G2} and~\eqref{G2_analytic} are equivalent. Similarly, if $\beps^* \colon \mathbb{R}^{d\times d}_{sym} \to \mathbb{R}^{d\times d}_{sym}$ satisfies~\eqref{AE2},~\eqref{AE1} and~\eqref{A1}, then the conditions~\eqref{proto-G2st} and~\eqref{G2st_analytic} are equivalent.

\begin{proof}
We only prove the assertion on~$\bD$, since the proof of the assertion on~$\beps^*$ is essentially the same. First, we assume~\eqref{proto-G2} to be satisfied. Then, in view of Lemma~\ref{basici}, we have $\nabla \bu_0 = \bD(\bT_0)$ almost everywhere in $\Omega$ for some measurable function $\bT_0$, which satisfies $\| \bT_0\|_\infty \leq C$ for a constant $C< \infty$.
For each Lebesgue point $x$ of $\bT_0$, we then infer from~\eqref{AE1-g} and~\eqref{A1-g} (note that $h$ is nonincreasing) for every $\bT \in \mathbb{R}^{d\times N}$ with $|\bT|=1$
that
\begin{align*}
 & \liminf_{n \to \infty} \big( \bD(n\bT) - \bD(\bT_0(x)) \big) \cdot \bT \\
 & = \liminf_{n \to \infty} \big( \bD(n\bT) - \bD(\bT_0(x)) \big) \cdot (\bT - n^{-1} \bT_0(x)) \\
 & \geq \liminf_{n \to \infty} \int_{0}^1 h( |sn \bT + (1-s)\bT_0(x) |) \ddd s \, |n \bT - \bT_0(x)|^2 n^{-1} \\
 & \geq \liminf_{n \to \infty} \int_{0}^{1/n} h( sn + (1-s) C) \ddd s \, |n \bT - \bT_0(x)|^2 n^{-1} \geq h(C+1) > 0,
\end{align*}
which proves~\eqref{G2_analytic} (possibly after adapting the choice of $C_1$ in~\eqref{AE2-g}). For the reverse implication we assume~\eqref{G2_analytic} to be satisfied. Thus, we find a constant $t_c$ such that we have
\begin{equation*}
\big(\bD(\bT)-\nabla \bu_0(x)\big) \cdot \bT = \big(\bD(|\bT| \hat{\bT}) -\nabla \bu_0(x)\big) \cdot \hat{\bT} |\bT| \ge \frac{C_1|\bT|}{2} \qquad \text{for all }|\bT|\ge t_c \text{ and a.e. } x \in \Omega\,,
\end{equation*}
where we have denoted $\hat{\bT} \coloneqq \bT / |\bT|$. Since~$\bD$ is continuous, according to (a standard consequence of) Brouwer's fixed-point theorem there exists a measurable map $\bT_0 \colon \Omega \to \mathbb{R}^{d\times N}$ with $\|\bT_0\|_{\infty} \le t_c$, which satisfies $\bD(\bT_0(x))=\nabla \bu_0(x)$ for a.e.~$x \in \Omega$. Thus, we arrive at~\eqref{proto-G2} with the compact set $K$ defined as
$\{\bD(\bT)\, :\, |\bT| \leq t_c \}$.
\end{proof}
\end{Lemma}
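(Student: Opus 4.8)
The plan is to establish the equivalence for $\bD$; the statement for $\beps^*$ follows verbatim after replacing $\mathbb{R}^{d\times N}$ by $\mathbb{R}^{d\times d}_{sym}$ throughout and \eqref{AE2-g}--\eqref{A1-g} by \eqref{AE2}--\eqref{A1}. Two consequences of Lemma~\ref{basici} will do most of the work: first, the strict monotonicity \eqref{subst2} makes $\bD$ injective, so by Brouwer's invariance of domain $\bD$ is a homeomorphism of $\mathbb{R}^{d\times N}$ onto the open set $\bD(\mathbb{R}^{d\times N})$ (whose topological boundary may nonetheless be nonempty); second, the refined form of monotonicity obtained in the proof of Lemma~\ref{basici}, namely
\[
 (\bD(\bT_1)-\bD(\bT_2))\cdot(\bT_1-\bT_2) \;=\; \int_0^1 (\bT_1-\bT_2,\bT_1-\bT_2)_{\cA(\bT_2+s(\bT_1-\bT_2))}\ddd s \;\ge\; |\bT_1-\bT_2|^2\int_0^1 h(|\bT_2+s(\bT_1-\bT_2)|)\ddd s ,
\]
which, unlike \eqref{subst2}, keeps track of $h$ along the whole segment $[\bT_2,\bT_1]$ and not only at its far endpoint.

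For \eqref{proto-G2} $\Rightarrow$ \eqref{G2_analytic}: since $\bD^{-1}$ is continuous, $\bD^{-1}(K)$ is compact, say contained in $\{|\bT|\le C\}$, so $\bT_0\coloneqq\bD^{-1}\circ\nabla\bu_0$ is a measurable map with $\|\bT_0\|_\infty\le C$ and $\bD(\bT_0)=\nabla\bu_0$ a.e.\ in $\Omega$. I would then fix a Lebesgue point $x$ of $\bT_0$ and a unit matrix $\bT$, apply the refined estimate with $\bT_1=n\bT$ and $\bT_2=\bT_0(x)$, and divide by $n$ so that the displacement becomes $\bT-n^{-1}\bT_0(x)$, of asymptotically unit norm:
\[
 \bigl(\bD(n\bT)-\nabla\bu_0(x)\bigr)\cdot\bigl(\bT-\tfrac1n\bT_0(x)\bigr) \;\ge\; \frac{|n\bT-\bT_0(x)|^2}{n}\int_0^1 h\bigl(|\bT_0(x)+s(n\bT-\bT_0(x))|\bigr)\ddd s .
\]
On the slice $s\in[0,1/n]$ the argument of $h$ stays below $C+1$, so the integral is at least $h(C+1)/n$, while $|n\bT-\bT_0(x)|^2\ge(n-C)^2$; hence the right-hand side exceeds $h(C+1)(n-C)^2/n^{2}\to h(C+1)>0$, and replacing $\bT-n^{-1}\bT_0(x)$ by $\bT$ on the left costs only $O(1/n)$ by the bound \eqref{AE1-g}. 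The lower bound $h(C+1)$ is uniform in $x$ and in $\bT$ with $|\bT|=1$, so passing to the $\liminf$ gives \eqref{G2_analytic} after shrinking $C_1$ if necessary.

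For \eqref{G2_analytic} $\Rightarrow$ \eqref{proto-G2}: writing an arbitrary matrix $\bS$ of large norm as $|\bS|\,(\bS/|\bS|)$, assumption \eqref{G2_analytic} provides a radius $t_c$ with $(\bD(\bS)-\nabla\bu_0(x))\cdot\bS\ge\tfrac12 C_1|\bS|$ for all $|\bS|\ge t_c$ and a.e.\ $x\in\Omega$. For such $x$ the map $\bS\mapsto\bD(\bS)-\nabla\bu_0(x)$ therefore points strictly outward on the sphere of radius $t_c$, so the standard corollary of Brouwer's fixed point theorem used in the proof of Lemma~\ref{basici} yields a zero $\bT_0(x)$ with $|\bT_0(x)|\le t_c$; this zero is unique by injectivity of $\bD$ and depends measurably on $x$, so $\nabla\bu_0=\bD(\bT_0)$ a.e. Then $K\coloneqq\{\bD(\bT):|\bT|\le t_c\}$ is compact, as the continuous image of a ball, and lies inside the open set $\bD(\mathbb{R}^{d\times N})$, hence in its interior; this is exactly \eqref{proto-G2}. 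I expect the only delicate point to be the first implication: the crude monotonicity bound \eqref{subst2} would insert the factor $h(n+C)$, and since $h(s)\,s\to 0$ this alone yields the useless lower bound $0$. The way around it is to retain the integral form of monotonicity and observe that, after the $n^{-1}\bT_0$–shift, it suffices to integrate over the thin slice $s\in[0,1/n]$, on which $\cA$ is evaluated only at matrices of bounded norm and is therefore uniformly elliptic, the quadratic factor $|n\bT-\bT_0|^2\sim n^{2}$ compensating exactly for the length $1/n$ of that slice.
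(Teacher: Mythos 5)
Your proof is correct and takes essentially the same route as the paper: both directions use the integral form of $h$-monotonicity (keeping track of $h$ along the whole segment) restricted to the thin slice $s\in[0,1/n]$ for the forward implication, and the Brouwer fixed-point corollary with $K=\{\bD(\bT):|\bT|\le t_c\}$ for the reverse. The only cosmetic difference is that you justify the continuity of $\bD^{-1}$ and the openness of $\bD(\mathbb{R}^{d\times N})$ explicitly via invariance of domain, whereas the paper leaves that implicit, and you spell out the $O(1/n)$ replacement step that the paper expresses as an equality of $\liminf$s.
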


\begin{Lemma}[Density of smooth compactly supported functions]\label{dense}
Let $\Omega \subset \mathbb{R}^d$ be a domain with Lipschitz boundary. Assume that $\Gamma_D \subset \partial \Omega$ is a relatively open Lipschitz set. Then, for any $\bu \in W^{1,1}_{\Gamma_D}(\Omega)^d$ with $\beps (\bu)
\in L^{\infty}(\Omega)^{d\times d}$, there exists a sequence $(\bu^n)_{n\in \mathbb{N}}$ such that $\bu^n \in \mathcal{C}^1_{\Gamma_D}(\overline{\Omega})^d$ for all $n\in \mathbb{N}$ and
\begin{align}\label{AAA1}
\bu^n &\to \bu &&\textrm{strongly in } W^{1,p}_{\Gamma_D}(\Omega)^d \; \textrm{ for all } 1\le p<\infty,\\
\beps(\bu^n) &\rightharpoonup^* \beps(\bu) &&\textrm{weakly-$*$ in } L^{\infty}(\Omega)^{d\times d}.\label{AAA2}
\end{align}
\end{Lemma}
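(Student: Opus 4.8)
The plan is to localise the problem by a partition of unity, regularise each localised piece by a translation followed by a mollification — neither of which enlarges the supremum norm of the symmetric gradient — and then recover all the stated convergences from Korn's inequality and Vitali's theorem. It suffices to produce $\bu^n \in \mathcal{C}^1_{\Gamma_D}(\overline\Omega)^d$ with $\bu^n \to \bu$ in $W^{1,1}(\Omega)^d$ and $\sup_n \|\beps(\bu^n)\|_{L^\infty} < \infty$: indeed, Korn's inequality in the form $\|\nabla \bv\|_{L^p} \le C_p(\|\bv\|_{L^p} + \|\beps(\bv)\|_{L^p})$, valid on the bounded Lipschitz domain $\Omega$ for $1 < p < \infty$, combined with the Sobolev embedding and iterated from the starting point $\bu \in W^{1,1}(\Omega) \hookrightarrow L^{d/(d-1)}(\Omega)$, shows after finitely many steps that $\bu \in W^{1,p}(\Omega)^d$ for every $p < \infty$ — hence $\bu \in \mathcal{C}^{0,\alpha}(\overline\Omega)^d$ for some $\alpha \in (0,1)$, so that $\bu$ vanishes pointwise on $\overline{\Gamma_D}$ — and the same bootstrap, applied to the sequence $(\bu^n)$ (which is bounded in $W^{1,1}$ with $\beps(\bu^n)$ bounded in every $L^p$), gives a uniform bound for $(\bu^n)$ in $W^{1,p}(\Omega)^d$ for all $p < \infty$; since $\nabla \bu^n \to \nabla \bu$ in $L^1$ along a subsequence, Vitali's theorem upgrades this to convergence in every $W^{1,p}$, which is \eqref{AAA1}, while the uniform $L^\infty$ bound then gives \eqref{AAA2}.

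For the construction, fix a finite open cover $\{U_j\}_{j=0}^M$ of $\overline\Omega$ with a subordinate partition of unity $\{\phi_j\}$, where $U_0 \Subset \Omega$ and, for $j \ge 1$, $U_j$ is a chart in which $\Omega$ lies above a Lipschitz graph; moreover, using that $\Gamma_D$ is a relatively open Lipschitz subset of $\partial\Omega$, we arrange that each $U_j$ ($j \ge 1$) either does not meet $\overline{\Gamma_D}$, or is such that $\Gamma_D$ too is cut out of $\partial\Omega \cap U_j$ by a Lipschitz graph. The interior piece $\phi_0 \bu$ belongs to $W^{1,p}(\Omega)^d$ for every $p$ and has compact support in $\Omega$, so it is regularised by ordinary mollification, which converges in every $W^{1,p}$ and keeps $\|\beps(\cdot)\|_{L^\infty}$ below $\|\phi_0 \beps(\bu)\|_{L^\infty} + \|\nabla \phi_0\|_{L^\infty}\|\bu\|_{L^\infty}$. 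For a piece $\phi_j \bu$ whose chart does not meet $\overline{\Gamma_D}$, I would translate $\phi_j \bu$ along a direction uniformly interior to $\Omega$ relative to the local graph and then mollify at a scale much smaller than the translation parameter; the translate is defined on a full neighbourhood of $\overline\Omega \cap U_j$, translation leaves $\|\beps\|_{L^\infty}$ unchanged, mollification does not increase it, the support stays disjoint from $\overline{\Gamma_D}$, and convergence to $\phi_j \bu$ in every $W^{1,p}$ follows from continuity of translations.

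The delicate case, and the main obstacle, is a chart $U_j$ meeting $\overline{\Gamma_D}$, especially near the relative boundary $\partial\Gamma_D$. Using that $\bu$ vanishes on $\overline{\Gamma_D}$, one extends $\phi_j \bu$ by zero across $\Gamma_D$ and then translates in a direction — chosen relative to the Lipschitz constants of the graphs of $\partial\Omega$ and of $\Gamma_D$ — that pushes the support off $\overline{\Gamma_D}$ near the relative interior of $\Gamma_D$. Near $\partial\Gamma_D$ this ``outward'' translation is incompatible with the interior translation required on the $\Gamma_N$ side (no single constant translation can simultaneously move the support off $\overline{\Gamma_D}$ and keep the piece inside $\Omega$, hence convergent, near $\Gamma_N$), so the two are patched by a fixed cutoff function separating a neighbourhood of $\overline{\Gamma_D}$ from the rest of $U_j$; the crucial observation is that the commutator term this produces in the symmetric gradient — of the form $\nabla(\text{cutoff})$ times a difference of translates of $\phi_j \bu$ — is supported at a positive distance from $\overline{\Gamma_D}$, where the two translates converge uniformly to the same continuous limit $\phi_j \bu$, so that this term tends to $0$ in $L^\infty$ and in particular stays bounded; one must also verify that the patching is carried out before the translated jump of the zero-extension across $\Gamma_N$ is met. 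Mollifying each piece at a scale much smaller than its translation and summing then yields $\bu^n \in \mathcal{C}^1_{\Gamma_D}(\overline\Omega)^d$ with $\bu^n \to \bu$ in $W^{1,1}$ and $\sup_n \|\beps(\bu^n)\|_{L^\infty} < \infty$, and the reduction step completes the proof. Everything except this near-$\partial\Gamma_D$ reconciliation is routine book-keeping.
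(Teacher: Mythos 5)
Your reduction step — producing $\bu^n\in\mathcal{C}^1_{\Gamma_D}(\overline\Omega)^d$ converging in $W^{1,1}$ with $\sup_n\|\beps(\bu^n)\|_\infty<\infty$, then using Korn's inequality, Sobolev bootstrap and Vitali's theorem — is sound and close in spirit to what the paper does. The construction step, however, has a genuine gap precisely in the case you flag as "the main obstacle," and the sentence you defer to ("one must also verify that the patching is carried out before the translated jump \dots is met") is exactly where the argument breaks down.

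The fixed-cutoff patching cannot resolve the conflict at $\partial\Gamma_D$. For the commutator $\nabla\chi\otimes(u_A-u_B)$ to be supported at a positive distance from $\overline{\Gamma_D}$ you need $\chi\equiv 1$ on a fixed $\delta$-neighbourhood $N_\delta(\overline{\Gamma_D})$. But $N_\delta(\overline{\Gamma_D})$ necessarily contains points of $\Gamma_N$ within distance $\delta$ of $\partial\Gamma_D$, together with nearby interior points; there you are using the piece $u_A$ obtained from the zero-extension across $\Gamma_D$ translated in a fixed direction $v_A$ with $v_{A,d}>L|v_A'|$ (so as to land in the zero region across $\Gamma_D$). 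For such $x$ near $\Gamma_N$, the argument $x-tv_A$ crosses $\Gamma_N$, where the zero extension is not defined — and no admissible extension across $\Gamma_N$ preserves both $\beps\in L^\infty$ \emph{and} the vanishing needed near $\Gamma_D$, because the two exterior extensions (zero across $\Gamma_D$, non-zero across $\Gamma_N$) disagree in the exterior arbitrarily close to $\partial\Gamma_D$. The only way to make $u_A$ everywhere defined is to shrink $\chi$ so that it vanishes near $\partial\Gamma_D$; but then on a neighbourhood of $\partial\Gamma_D$ your $\bu^n$ reduces to the interior translate $u_B$, which does \emph{not} vanish on $\Gamma_D$ — so $\bu^n\notin\mathcal{C}^1_{\Gamma_D}(\overline\Omega)^d$. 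Moving the cutoff scale with $n$ replaces $\nabla\chi_n\otimes(\text{difference of translates})$ by a term of size $\sim n\cdot|\bu|$ near $\Gamma_D$; continuity of $\bu$ alone only gives $o(1)$ decay, not $O(1/n)$, so this blows up.

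What your argument is missing is precisely the ingredient the paper devotes most of the proof to: the Lipschitz-type decay
\[
|\bu(x)| \le C\,\|\beps(\bu)\|_\infty\, \dist(x,\Gamma_D)\qquad\text{for all }x\in\Omega,
\]
see \eqref{need}. This is non-trivial here because only $\beps(\bu)$, not $\nabla\bu$, is controlled in $L^\infty$; the paper proves it by integrating along several carefully chosen line segments toward $\Gamma_D$, with directions adapted to the Lipschitz constants of the two graphs $a$ and $b$, and by first treating $u_d$, then $u_{d-1}$, then the remaining components, separately on the regions $\Omega_D$, $\Omega^1_{\Gamma_N}$ and $\Omega^2_{\Gamma_N}$. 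Once \eqref{need} is in hand, one multiplies $\bu$ by an $n$-dependent cutoff $\varphi_n$ that vanishes on a $1/n$-neighbourhood of $\Gamma_D$ with $|\nabla\varphi_n|\le Cn$; the term $n|\bu|$ on the transition shell is then bounded by $C\|\beps(\bu)\|_\infty$, so $\beps(\bu\varphi_n)$ stays uniformly bounded. After that there is a \emph{single} inward shift of size $O(1/n)$, uniform for the whole of $\partial\Omega$ in the chart, followed by mollification — no conflict of translation directions ever arises, because the trace-zero property is achieved by the cutoff, not by pushing the support off $\Gamma_D$. Without this decay estimate (or an equivalent surrogate), the near-$\partial\Gamma_D$ reconciliation is not "routine book-keeping": it is the heart of the lemma, and your proposal does not close it.
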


\begin{proof}
Since $\Omega$ is a Lipschitz domain, we know that the boundary can be covered by a finite number of, say $k$, open sets, where the boundary is described as a graph of a Lipschitz mapping. Moreover, after possibly adding to such a covering a proper open set $\Omega_0 \subset \overline{\Omega}_0\subset \Omega$, we can find a corresponding partition of unity
$\{\tau_i\}_{i=0}^k$ and decompose $\bu=\sum_{i=0}^k \tau_i \bu=:\sum_{i=1}^k \bu_i$
where, by Korn's inequality, we have $\bu_i \in W^{1,p}(\Omega)^d$ for all $p\in [1,\infty)$ and $\beps(\bu_i)\in L^{\infty}(\Omega)^{d\times d}$, for each $i \in
\{1,\ldots,k \}$. Next, we can follow step by step the Appendix in~\cite{BMRS2014} to show that each $\bu_i$ can be approximated by a smooth function satisfying~\eqref{AAA1},~\eqref{AAA2} whenever one considers only the interior of $\Omega$ or the covering near $\Gamma_{D}$ which does not intersect with $\partial \Gamma_D$. Similarly, the same results can also be shown to hold (by changing the procedure in~\cite{BMRS2014} so that one performs shifts in the outward direction instead of the inward direction)
if we work in a neighborhood of $\Gamma_N\coloneqq \partial \Omega \setminus \overline{\Gamma_D}$ which does not intersect with $\partial \Gamma_D$. Hence, it only remains to check what happens if we consider a part of the partition of unity which intersects with $\partial \Gamma_D$. Without loss of generality, we show how the approximation can be done when a particular choice of covering and $d\ge 3$ is considered (for $d=2$ the proof is even simpler). Hence, let $a\in \mathcal{C}^{0,1}((-1,1)^{d-1})$ and $b\in \mathcal{C}^{0,1}((-1,1)^{d-2})$ be Lipschitz functions fulfilling\footnote{We assume these restrictions on $a$ and $b$
merely for the sake of simplicity of the presentation. Otherwise, we would need to change the geometry slightly and rescale everything.} $\|a\|_{\infty}, \|b\|_{\infty}\le 2^{-1}$ and $\|\nabla a\|_{\infty}, \|\nabla b\|_{\infty}\le L$ for some $L>1$. Next, consider $\Omega$ and $\Gamma_D$ given as
$$
\begin{aligned}
\Omega& \coloneqq \{x\in \mathbb{R}^d \colon x=(x',x_d), \, x'\in (-1,1)^{d-1},\, a(x')<x_d<1\},\\
\Omega^c& \coloneqq \{x\in \mathbb{R}^d \colon x=(x',x_d), \, x'\in (-1,1)^{d-1},\, a(x')>x_d>-1\},\\
\partial \Omega& \coloneqq \{x\in \mathbb{R}^d \colon a(x')=x_d\},\\
\Gamma_D& \coloneqq \{x\in \partial \Omega \colon x=(x',x_d)=(x'',x_{d-1},x_d), \, b(x'')<x_{d-1}\},\\
\Gamma_N & \coloneqq \{x\in \partial \Omega \colon x=(x',x_d)=(x'',x_{d-1},x_d), \, b(x'')>x_{d-1}\}.
\end{aligned}
$$
We consider a function~$\bu$ satisfying the assumptions, and since we are operating in a localized setting, we may assume that $\bu(x)$ can be extended by zero whenever $x'\notin (-1/2,1/2)^d$ so that it remains a Sobolev function. Finally, we focus on a proper approximation of this function.

First of all, let us assume that there exists a constant $C$ such that for all $x\in \Omega$ we have
\begin{equation}
|\bu(x)|\le C\|\beps(\bu)\|_{\infty} \dist(x,\Gamma_D). \label{need}
\end{equation}
Then, by taking an arbitrary nonnegative function $\varphi_n\in \mathcal{C}^{\infty}(\mathbb{R}^d)$ such that $\varphi_n(x)=0$ if $\dist (x,\Gamma_D)<n^{-1}$, $\varphi_n(x)=1$ if $\dist (x,\Gamma_D)> 2n^{-1}$ and $|\nabla \varphi|\le Cn$, and defining $\tilde{\bu}^n\coloneqq \bu \varphi_n$, we see that $\tilde{\bu}^n$ and $\nabla \tilde{\bu}^n$ converge pointwise to~$\bu$ and $\nabla \bu$, respectively. Moreover, we see that
$$
|\beps(\tilde{\bu}^n(x))|\le C|\beps(\bu(x))| + Cn|\bu(x)|\chi_{\{x \colon n^{-1}<\dist (x,\Gamma_D)<2n^{-1}\}}\le C.
$$
Consequently, we see that $\tilde{\bu}^n$ fulfills~\eqref{AAA1} and~\eqref{AAA2}. Moreover, $\tilde{\bu}^n$ is identically zero in an
$n^{-1}$-neighborhood of $\Gamma_D$. Therefore, we can shift $\tilde{\bu}^n$ in the outward direction by $2^{-1}n^{-1}$ and the resulting function will still be identically zero near $\Gamma_D$. Finally, by applying a convolution with a mollification kernel, we can construct the desired sequence of smooth functions fulfilling all requirements; we refer to~\cite{BMRS2014} for the details. Thus, it only remains to check the validity of~\eqref{need}.

First, we show that~\eqref{need} holds on the set
$$
\Omega_D\coloneqq \{x\in \Omega \colon b(x'') < x_{d-1}\}.
$$
Notice that since $\Omega$ is Lipschitz, we know that $\dist (x,\partial \Omega) \sim |x_d - a(x')|$, where the equivalence constant depends on the Lipschitz constant~$L$. Hence, for $x\in \Omega_D$ arbitrary,
we infer from $\bu=\mathbf{0}$ on $\Gamma_D$ that
\begin{equation}\label{desd}
|u_d(x)|=\bigg|\int_{a(x')}^{x_d} \frac{{\rm d}}{{\rm d}s} u_d(x', s)\ddd s\bigg| \le |x_d-a(x')| \|\beps(\bu)\|_{\infty}
\end{equation}
and that consequently~\eqref{need} holds for $u_d$ on $\Omega_D$.

Next, we show that~\eqref{need} holds also for $u_{d-1}$ on $\Omega_D$. Indeed, let us consider an
arbitrary $x\in \Omega_D$ and for any $t\ge 0$ define $y(t)\coloneqq x+t(0,\ldots, 1,-2L)$. Then, we find the smallest $t_0>0$ such $y(t_0)\in \partial \Omega$, i.e., we look for the smallest $t>0$
that solves
$$
a(x'',x_{d-1}+t)=x_d-2Lt.
$$
To get an estimate as well the upper bound for such a $t$, we first notice that since $x\in \Omega$, we have
$$
a(x'',x_{d-1}+0)<x_d-2L0.
$$
On the other hand, using the Lipschitz continuity of $a$, we have
$$
\begin{aligned}
x_d-2Lt &=x_d-a(x') + a(x')-a(x'',x_{d-1}+t)+a(x'',x_{d-1}+t)-2Lt\\
&\le a(x'',x_{d-1}+t) +|x_d-a(x')|-Lt.
\end{aligned}
$$
Hence we see that whenever $t>|x_d-a(x')|/L$ then $y(t)\in \Omega^c$. Consequently, due to continuity of $a$ there
exists a $t_0>0$ fulfilling in addition $t_0\le C(L)|x_d-a(x')|\le C(L)\dist (x,\partial \Omega)$ such that
$y(t_0)\in \partial \Omega$. Moreover, since $x\in\Omega_D$, it follows directly from the definition that $b(y''(t_0))<y_{d-1}(t_0)$ and consequently $y(t_0)\in \Gamma_D$. Since $\bu(y(t_0))=\b0$, we deduce that
$$
\begin{aligned}
|u_{d-1}(x)-2L u_d(x)| &= \left|\int_0^{t_0} \frac{\ddd}{\ddd t} \big( u_{d-1}(y(t)) -2L u_d(y(t)) \big) \ddd t\right| \\
&= \left|\int_0^{t_0} \beps_{d-1,d-1} (\bu(y(t)) -4L\beps_{d-1,d} (\bu(y(t)) +4L^2\beps_{d,d} (\bu(y(t))\ddd t\right| \\
&\le C(L) t_0 \|\beps(\bu)\|_{\infty}\le C(L) \|\beps(\bu)\|_{\infty} \dist(x,\Gamma_D).
\end{aligned}
$$
Thus, since we have already proven~\eqref{need} for $u_d$, we see that it holds for $u_{d-1}$ in $\Omega_D$ as well. Finally, we show the validity of~\eqref{need} in $\Omega_D$ also for $u_i$ with $i=1,\ldots,d-2$. To this end, for $x\in \Omega_D$ we set
$$
y(t)=x + t(1,0,\ldots,0,L , -3L^2).
$$
Using the fact that $x\in \Omega_D$ and the Lipschitz continuity of $a$ and $b$, we see that
$$
b(y''(t))-y_{d-1}(t)\le b(x'')-x_{d-1} +|b(y''(t)-b(x'')|-L t <0
$$
and, by recalling $L>1$,
$$
a(y'(t))-y_d(t)\ge a(x')-x_d -|a(y'(t))-a(x')| +3L^2 t \ge a(x')-x_d +L^2t.
$$
Similarly as above, we find the smallest $t_0>0$ such that $y(t_0)\in \partial \Omega$ and due to the above properties, we see that $y(t_0)\in \Gamma_D$ and $t_0\le |x_d-a(x')|/L^2 \le C(L)\dist (x,\Gamma_D)$. Then we have
$$
\begin{aligned}
|u_1 (x)+L u_{d-1}(x) -3L^2 u_d(x)|&= \bigg|\int_0^{t_0} \frac{\ddd}{\ddd t} (u_1(y(t))+L u_{d-1}(y(t))-3L^2 u_d(y(t)))\ddd t\bigg|\\
&= \bigg|\int_0^{t_0} \beps_{1,1}(\bu(y(t)))+ L^2\beps_{d-1,d-1}(\bu(y(t)))+9 L^4\beps_{d,d}(\bu(y(t))) \\
&\quad + 2L\beps_{1,d-1}(\bu(y(t)))- 6 L^2\beps_{1,d}(\bu(y(t))) - 6 L^3\beps_{d-1,d}(\bu(y(t))) \ddd t\bigg|\\
&\le C(L)t_0\|\beps(\bu)\|_{\infty} \le C(L) \|\beps(\bu)\|_{\infty}\dist(x,\Gamma_D).
\end{aligned}
$$
Consequently, since~$u_{d-1}$ and~$u_d$ fulfill~\eqref{need} we get the same also for~$u_1$ and after the same procedure for all~$u_i$.

Next, we show the validity of~\eqref{need} in $\Omega\setminus \Omega_D$.
Thanks to the definition, we have for all $x$ from this set that $\dist(x,\Gamma_D)\sim \max(|x_d-a(x')|,|x_{d-1}-b(x'')|)$.
First, we introduce the set
\begin{align*}
\Omega^1_{\Gamma_N} \coloneqq
\{x\in \Omega \colon & \textrm{there exists } y\in \Omega \textrm{ such that } b(y'')=y_{d-1}
\textrm{ and } t\in [0,1], \\
& \textrm{satisfying }
x=(y'', y_{d-1} - t(2L)^{-1}(y_d-a(y')),y_d)
\}.
\end{align*}
Note here that if $x(t)=(y'', y_{d-1}- t (2L)^{-1}(y_d-a(y')),y_d)\in \Omega$ for some $t\in [0,1]$ then $x(t)\in \Omega$ for all $t\in [0,1]$. This easily follows from the inequality
$$
x_d(t)-a(x'(t))=y_d-a(y') + a(y')-a(x'(t))\ge y_d-a(y') -L|y'(t)-x'(t)|=(1-t/2)(y_d-a(y'))\ge 0.
$$
Therefore, we can use the same procedure as above and conclude that
$$
\begin{aligned}
|u_{d-1}(x(t))|&\le \left|\int_0^t\frac{\ddd}{\ddd s} u_{d-1}(x(s))\ddd s \right| + |u_{d-1}x(0)|\le L^{-1}|y_d - a(y')|\left|\int_0^t \frac{\partial u_{d-1}(x(s))}{\partial x_{d-1}} \ddd s \right| + |u_{d-1}(y)|\\
& \le C(L) \|\beps(\bu)\|_{\infty}|y_d - a(y')| + C(L) \|\beps(\bu)\|_{\infty} \dist(x,\Gamma_D),
\end{aligned}
$$
where for the last inequality we used the fact that $y\in \overline{\Omega}_{D}$. Finally, since
$$
\begin{aligned}
|y_d - a(y')|&\le |x_d(t)-a(x'(t))| + |a(x'(t))-a(y')|\le |x_d(t)-a(x'(t))| + L|x'(t)-y'|\\
& \le |x_d(t)-a(x'(t))|+|y_d - a(y')|/2
\end{aligned}
$$
we have that $|y_d-a(y')|\le 2|x_d(t)-a(x'(t))|\le C \dist (x(t),\Gamma_D)$ and therefore $u_{d-1}$ satisfies~\eqref{need} in $\Omega_{\Gamma_N}^1$.
Then, for $x(t)\in \Omega^1_{\Gamma_N}$ let us consider $z(s)\coloneqq x(t)+(0,\ldots, s,-s)$ and find the smallest $s_0$ such that either $\bu(z(s_0))=\b0$ or $z(s_0)\in \overline{\Omega}_D$. Note that $s_0\le (2L)^{-1}(y_d-a(y'))$. Consequently, using the triangle inequality, the fact $\dist(x(t),\Gamma_D)\sim \dist(z(s_0),\Gamma_D)$, we get
$$
\begin{aligned}
|u_{d-1}(x(t))-u_d(x(t))| &\le \int_0^{s_0} \left|\frac{\ddd}{\ddd s} (u_{d-1}(z(s))-u_{d}(z(s)))\right| \ddd s+|u_{d-1}(z(s_0))-u_d(z(s_0))|\\
&\le Cs_0 \|\beps(\bu)\|_{\infty} + C(L) \|\beps(\bu)\|_{\infty} \dist(z(s_0),\Gamma_D)
\end{aligned}
$$
and we see that~\eqref{need} holds also for $u_d$ in $\Omega^1_{\Gamma_N}$. Then, following the scheme above we can get the same result also for all $u_i$ with $i=1,\ldots, d-2$. Next, we switch to the set
\begin{align*}
\Omega^2_{\Gamma_N}\coloneqq \{x\in \Omega \colon & \textrm{there exists } y\in \Omega \textrm{ such that } b(y'')=y_{d-1} \textrm{ and } t\ge 0, \\
 & \textrm{satisfying } x=(y'', y_{d-1}- (y_d-a(y'))(2L)^{-1},y_d-t)\}.
\end{align*}
Clearly, if $x(t)=(y'', y_{d-1}- (y_d-a(y'))(2L)^{-1},y_d-t)\in \Omega^2_{\Gamma_N}$, then for all $s\in [0,t]$ we have $x(s)\in \Omega^2_{\Gamma_N}$ as well,
and we have
$$
\begin{aligned}
|u_d(x(t))| &\le \int_0^{|y_d - a(x'(t))|} \left|\frac{\partial u_d(x(t))}{\partial x_d}\right| \ddd t+|u_{d}(x(0))| \\
 & \le C \|\beps(\bu)\|_{\infty} (|y_d - a(x'(t))|+\dist(x(0),\Gamma_N))
 \le C(L) \|\beps(\bu)\|_{\infty} |y_d -a(y')|.
\end{aligned}
$$
Moreover, using
$$
\dist(x(t),\Gamma_D) \ge C|b(x''(t))-x'_{d-1}(t)|=C(L)|y_d - a(y')|,
$$
we see that~\eqref{need} holds also for $u_d$ in $\Omega^2_{\Gamma_N}$. For the other $u_i$ we can now follow
step by step the computation above by choosing a straight line connecting $x(t)$ and $\Omega^1_{\Gamma_N}$.
Finally, since for all $x\notin \Omega^1_{\Gamma_N} \cap \Omega^2_{\Gamma_N}$ we have
$\dist(x,\Gamma_D)\ge \varepsilon>0$ for some $\varepsilon$, we can complete the proof of~\eqref{need} by noting that $\bu\in L^{\infty}$.
\end{proof}

\section{The proofs of Lemmas~\ref{AL1}--\ref{AL5}}
\label{app_minimizers_ws}

\begin{proof}[Proof of Lemma~\ref{AL1}]
First, we focus on the uniqueness. Let $(\bu_1,\bT_1)$ and $(\bu_2,\bT_2)$ be two weak solutions to~\eqref{TT1*-G}. Subtracting the weak formulation~\eqref{TT1*-G} for $(\bu_1,\bT_1)$ from that for $(\bu_2,\bT_2)$
we deduce that
$$
\int_{\Omega} (\bT_1 - \bT_2)\cdot \nabla \bw \ddd x =0 \qquad \textrm{ for all } \bw \in W_{\Gamma_D}^{1,\infty}(\Omega)^N.
$$
Hence, setting $\bw=\bu_1-\bu_2 \in W_{\Gamma_D}^{1,\infty}(\Omega)^N$, we get
$$
\int_{\Omega} (\bT_1 - \bT_2)\cdot (\bD(\bT_1)-\bD(\bT_2)) \ddd x =0.
$$
Using the strict monotonicity~\eqref{subst2} of~$\bD$, we then deduce that $\bT_1=\bT_2$ a.e. in $\Omega$. Consequently, we also get $\nabla \bu_1 = \nabla \bu_2$. We thus see that~$\bT$ is given uniquely, and the same holds true also for~$\bu$ provided that either $\Gamma_D$ is of positive measure or that the mean value of~$\bu$ is fixed (recall here the definition of $W^{1,\infty}_{\Gamma_D}(\Omega)^N$ and that $\Omega$ is connected).

Next, we start from a weak solution $(\bu,\bT)$ to~\eqref{TT1*-G} and want to show~\eqref{minimiz-prim} and~\eqref{minimiz}. It is evident that $\bT\in \mathcal{S}$ and $\bu \in \mathcal{S}^*$. Using the convexity of~$F$, see Lemma~\ref{basici}, combined with~\eqref{derpot}, we get for all $\bW \in \mathcal{S}$ the inequality
$$
\begin{aligned}
\int_{\Omega} F(\bW)-F(\bT) -\nabla \bu_0 \cdot (\bW -\bT) \ddd x &\ge \int_{\Omega} \bD(\bT)\cdot (\bW-\bT) -\nabla \bu_0 \cdot (\bW -\bT) \ddd x \\
&= \int_{\Omega} \nabla (\bu-\bu_0) \cdot (\bW -\bT) \ddd x =0,
\end{aligned}
$$
where the last equality follows from the definition of $\mathcal{S}$ and the fact that $\bu-\bu_0 \in W^{1,\infty}_{\Gamma_D}(\Omega)^N$. Hence,~\eqref{minimiz} is established. To prove also~\eqref{minimiz-prim}, we first notice that the left-hand side of that inequality is finite. Indeed, since~$\bT$ is finite almost everywhere, we have that $\nabla \bu \in \bD(\mathbb{R}^{d\times N})$ almost everywhere, and using~\eqref{propFstar}
we deduce that
$$
\int_{\Omega}F^*(\nabla \bu) \ddd x = \int_{\Omega} \nabla \bu \cdot \bD^{-1}(\nabla \bu) - F(\bD^{-1}(\nabla \bu))\ddd x = \int_{\Omega} \bD(\bT)\cdot \bT - F(\bT) \ddd x <\infty.
$$
Next, we distinguish two possibilities. First, if $\bv\in \mathcal{S}^*$ is such that the set $\{x\in \Omega \colon \nabla \bv(x) \notin \overline{\bD(\mathbb{R}^{d\times N})}\}$ is of positive measure, we simply deduce that the inequality~\eqref{minimiz-prim} holds true since the right-hand side is infinite in this case. Otherwise, if $\nabla \bv \in \overline{\bD(\mathbb{R}^{d\times N})}$ a.e. in $\Omega$, we can use the convexity of~$F^*$ to deduce with the aid of Lemma~\ref{basici} that
$$
\begin{aligned}
&\int_{\Omega} F^*(\nabla \bv)-F^*(\nabla \bu) -\bef \cdot (\bv -\bu)\ddd x - \int_{\Gamma_N} \bg \cdot (\bv -\bu)\ddd S \\
&\ge \int_{\Omega} \bD^{-1}(\nabla \bu) \cdot (\nabla \bv - \nabla \bu) -\bef \cdot (\bv -\bu)\ddd x - \int_{\Gamma_N} \bg \cdot (\bv -\bu)\ddd S\\
&= \int_{\Omega} \bT \cdot (\nabla \bv - \nabla \bu) -\bef \cdot (\bv -\bu)\ddd x - \int_{\Gamma_N} \bg \cdot (\bv -\bu)\ddd S=0,
\end{aligned}
$$
where the last equality follows from~\eqref{TT1*-G} and the fact that $\bv-\bu \in W^{1,\infty}_{\Gamma_D}(\Omega)^N$.
\end{proof}

\begin{proof}[Proof of Lemma~\ref{AL3}]
First, we show that the infimum is finite. To obtain the upper bound, it suffices to show that the set $\mathcal{S}$ is nonempty. Indeed, by considering the problem
$$
-\diver (|\nabla \bv|^{d-1}\nabla \bv) = \bef\, \textrm{ in } \Omega, \qquad |\nabla \bv|^{d-1}\nabla \bv \cdot \bn = \bg\, \textrm{ on } \Gamma_N, \quad \bv = \b0 \textrm{ on } \Gamma_D,
$$
which has a unique solution $\bv\in W^{1,d+1}_{\Gamma_D}(\Omega)^N$ (recall again that either $\Gamma_D$ is of positive measure or the mean value is fixed), we see that the function $|\nabla \bv|^{d-1}\nabla \bv$ belongs to $\mathcal{S}$. In order to establish also the lower bound, we first use the definition of~$F$ (cf.~\eqref{potende}) and Fubini's theorem to deduce, with $\bW\coloneqq\frac{\bT}{|\bT|}$, that
$$
\begin{aligned}
\int_{\Omega}\!F(\bT) - \nabla \bu_0 \cdot \bT\ddd x &= \int_{\Omega}\int_0^1\! (\bD(t\bT)-\nabla \bu_0)\cdot \bT \ddd t \ddd x =\int_0^{\infty}
\int_{\{|\bT(x)|>t\}}\!(\bD(t\bW)-\nabla \bu_0)\cdot \bW \ddd x \ddd t.
\end{aligned}
$$
Next, using the safety strain condition~\eqref{G2_analytic}, we find $t_c >0$ such that, for all $t\ge t_c$ and almost all $x\in \Omega$, we have $(\bD(t\bW)-\nabla \bu_0)\cdot \bW\ge C_1/2$. Consequently, using~\eqref{AE1-g} and the fact that $\bu_0$ is Lipschitz, we deduce that
$$
\begin{aligned}
\int_{\Omega}F(\bT) - \nabla \bu_0 \cdot \bT\ddd x & =\int_0^{t_c} \int_{\{|\bT(x)|>t\}}(\bD(t\bW)-\nabla \bu_0)\cdot \bW \ddd x \ddd t\\
& \quad {}+\int_{t_c}^{\infty} \int_{\{|\bT(x)|>t\}}(\bD(t\bW)-\nabla \bu_0)\cdot \bW \ddd x \ddd t\\
&\ge -C(C_2,\Omega,t_c,\|\nabla \bu_0\|_{\infty}) + \frac{C_1}{2}\int_{t_c}^{\infty} |\{x\in \Omega \colon |\bT(x)|>t\}| \ddd t\\
&\ge -C(C_1,C_2,\Omega,t_c,\|\nabla \bu_0\|_{\infty}) + \frac{C_1}{2}\int_{0}^{\infty} |\{x\in \Omega \colon |\bT(x)|>t\}| \ddd t\\
&=\frac{C_1\|\bT\|_1}{2}-C,
\end{aligned}
$$
and the lower bound for the infimum follows from this lower bound. Note also that the above computations imply that every minimizing sequence for $J$ in $\mathcal{S}$ is bounded in $L^1(\Omega)^{d\times N}$. Therefore, if the infimum of the mapping $\bT \mapsto J(\bT)$ is attained for some $\bT \in L^1(\Omega)^{d\times N}$, then necessarily $\|\bT\|_1\le C$ with a constant $C$ depending only on the data.

In order to prove the second part of the lemma, we consider a minimizer~$\bT$ of $J$ in $\mathcal{S}$.
First, since~$F$ is strictly convex, see Lemma~\ref{basici}, and since the set $\mathcal{S}$ is closed and convex in $L^1(\Omega)^{d \times N}$, we see that there is at most one minimizer. Setting $\bW=\bT + \lambda \tilde{\bW}$ in~\eqref{minimiz} with arbitrary $\tilde{\bW}\in L^1(\Omega)^{d\times N}$ satisfying
\begin{equation}\label{xmen}
\int_{\Omega} \tilde{\bW} \cdot \nabla \bw\ddd x=0 \qquad \textrm{ for all } \bw \in W^{1,\infty}_{\Gamma_D}(\Omega)^d,
\end{equation}
we deduce the following Euler--Lagrange equation for the minimization problem~\eqref{minimiz} for the functional $J$:
\begin{equation}
\int_{\Omega} (\bD(\bT)-\nabla \bu_0) \cdot \tilde{\bW} \ddd x=0 \qquad \textrm{ for all } \tilde{\bW} \textrm{ satisfying~\eqref{xmen}}.
\label{ELmin}
\end{equation}
Next, we find a unique~$\bu$ such that $\bu-\bu_0 \in W^{1,2}_{\Gamma_D}(\Omega)^N$ and
\begin{equation}
\int_{\Omega}
\nabla \bu \cdot \nabla \bw \ddd x= \int_{\Omega} \bD(\bT)\cdot \nabla \bw \ddd x \qquad \textrm{ for all } \bw\in W^{1,2}_{\Gamma_D}(\Omega)^N.
\label{findbu}
\end{equation}
Our final goal is to show that $\nabla \bu = \bD(\bT)$ a.e. in $\Omega$, which will directly imply that $\bu \in W^{1,\infty}(\Omega)^N$ and hence, as $T \in \mathcal{S}$, that the couple $(\bT,\bu)$ is a weak solution.
To prove that $\nabla \bu = \bD(\bT)$, we set $\tilde{\bW}\coloneqq\nabla \bu -\bD(\bT)$ in~\eqref{ELmin} (which is an admissible choice thanks to~\eqref{findbu}) to deduce that
\begin{equation*}
\|\bD(\bT)\|_2^2=-\int_{\Omega} \nabla \bu_0 \cdot (\nabla \bu -\bD(\bT)) \ddd x+\int_{\Omega} \bD(\bT)\cdot \nabla \bu \ddd x,
\end{equation*}
and setting $\bv=\bu - \bu_0$ in~\eqref{findbu} we deduce that
\begin{equation*}
\|\nabla \bu\|_2^2= \int_{\Omega} \bD(\bT)\cdot \nabla(\bu-\bu_0)\ddd x+\int_{\Omega}
\nabla \bu \cdot \nabla \bu_0 \ddd x.
\end{equation*}
Summing the resulting identities we obtain
$$
\begin{aligned}
\|\bD(\bT)\|_2^2+\|\nabla \bu\|_2^2=2\int_{\Omega} \bD(\bT) \cdot \nabla \bu\ddd x \implies \|\bD(\bT)-\nabla \bu\|_2^2 =0,
\end{aligned}
$$
which completes the proof.
\end{proof}
\begin{proof}[Proof of Lemma~\ref{AL2}]
First, we show that the infimum of $J^*$ is finite. For this purpose it is enough to check that
\begin{equation}\label{check}
\left|\int_{\Omega} F^*(\nabla \bu_0)\ddd x\right| <\infty
\end{equation}
(keeping in mind condition~\eqref{D3} in the case $\Gamma_D = \emptyset$). However, this is a direct consequence of the condition~\eqref{proto-G2}, which guarantees the existence of a number $t_c$ and a measurable function $\bT_0:\Omega \to \mathbb{R}^{d\times N}$ such that one has $\|\bT_0\|_{\infty} \le t_c$ and $\nabla \bu_0=\bD(\bT_0)$ a.e.~in $\Omega$. Consequently,
$$
\left|\int_{\Omega} F^*(\nabla \bu_0)\ddd x\right| = \left|\int_{\Omega} \bT_0 \cdot \nabla \bu_0 - F(\bT_0)\ddd x\right| \leq C(C_2,t_c, \|\nabla u_0\|_{\infty},\Omega) < \infty,
$$
which in turn shows that the infimum of $J^*$ over $\mathcal{S}^*$ is finite. Therefore, we can find a minimizing sequence $\bu^n \in \mathcal{S}^*$, which, because of the coercivity of $J^*$ (recalling $F^*=\infty$ outside of the compact set $\overline{\bD(\mathbb{R}^{d\times N})}$ and the assumptions on $\bg,\bef,\bu_0$), satisfies
$$
\|\bu^n\|_{1,\infty} \le C.
$$
Consequently, using the weak-$*$ lower semicontinuity of convex functionals, we deduce that the infimum is attained for some $\bu\in \mathcal{S}^*$ (note that $S^\ast$ is closed and convex in $W^{1,\infty}(\Omega)^N$). Consequently, since $F^*(\nabla \bu)$ is finite almost everywhere, we have that $\nabla \bu \in \overline{\bD(\mathbb{R}^{d\times N})}$ almost everywhere in $\Omega$. Next, we show that there exists a $\bT \in L^1(\Omega)^{d\times N}$ such that $\bD(\bT)=\nabla \bu$ almost everywhere in $\Omega$, and which satisfies~\eqref{nerovnost-g}. To this end, let us assume that $\bv \in \mathcal{S}^*$ is arbitrary with
$$
\int_{\Omega}F^*(\nabla \bv)\ddd x < \infty
$$
(which directly yields that $\nabla \bv$ takes values in $\overline{\bD(\mathbb{R}^{d\times N})}$ a.e. in $\Omega$). Then, because of the convexity of~$F^*$, it follows that also
$$
\int_{\Omega}F^*((1-\lambda)\nabla \bu + \lambda \nabla \bv)\ddd x <\infty \qquad \textrm{ for all } \lambda \in [0,1].
$$
Therefore, we may now use $(1-\lambda)\bu + \lambda \bv$ instead of~$\bv$ in~\eqref{minimiz-prim}, and after dividing by $\lambda$ we get the inequality
\begin{equation*}
\int_{\Omega} \frac{F^*(\nabla \bu)-F^*((1-\lambda)\nabla \bu + \lambda \nabla \bv)}{\lambda}\ddd x \le \int_{\Omega} \bef \cdot (\bu-\bv)\ddd x + \int_{\Gamma_{N}} \bg \cdot (\bu - \bv)\ddd S,
\end{equation*}
which by the convexity of~$F^*$ leads to
\begin{equation}
\int_{\Omega} \bD^{-1}((1-\lambda)\nabla \bu + \lambda \nabla \bv)\cdot (\nabla \bu - \nabla \bv)\ddd x \le \int_{\Omega} \bef \cdot (\bu-\bv)\ddd x + \int_{\Gamma_{N}} \bg \cdot (\bu - \bv)\ddd S, \label{hruza2}
\end{equation}
provided that for almost all $x\in \Omega$ and all $\lambda \in (0,1)$ we have
\begin{equation}
(1-\lambda)\nabla \bu(x) + \lambda \nabla \bv(x)\in \bD(\mathbb{R}^{d\times N}).\label{hr23}
\end{equation}
However, to justify~\eqref{hr23}, it is enough to show that at least one of the following inclusions holds:
\begin{equation}
\nabla \bu (x) \in \bD(\mathbb{R}^{d\times N}) \quad \text{or} \quad
\nabla \bv (x) \in \bD(\mathbb{R}^{d\times N}).
\label{hr24}
\end{equation}
Indeed, assume for example that the second holds (the arguments for the other inclusion are exactly the same); we shall then show that for any $\lambda \in (0,1)$ there exists an $\varepsilon >0$ such that for all $\bT \in B_{\varepsilon}(\b0)$ we have
\begin{equation}\label{previous}
(1-\lambda)\nabla \bu(x) + \lambda \nabla \bv(x) + \bT \in \overline{\bD(\mathbb{R}^{d\times N})},
\end{equation}
which then necessarily implies~\eqref{hr23}. In order to prove that the assertion~\eqref{previous} holds, we note that
$$
(1-\lambda)\nabla \bu(x) + \lambda \nabla \bv(x) + \bT =(1-\lambda)\nabla \bu(x) + \lambda (\nabla \bv(x) + \lambda^{-1}\bT),
$$
and, using the convexity of $\overline{\bD(\mathbb{R}^{d\times N})}$ (see Lemma~\ref{basici}), we observe that it suffices to check that $(\nabla \bv(x) + \lambda^{-1}\bT) \in \overline{\bD(\mathbb{R}^{d\times N})}$. However, since $\nabla \bv(x)$ belongs to the open set $\bD(\mathbb{R}^{d\times N})$, the claim follows by choosing $\varepsilon$ sufficiently small.

In view of the equality $\nabla \bu_0 = \bD(\bT_0)$ guaranteed by~\eqref{proto-G2}, we now take advantage of~\eqref{hruza2}--\eqref{hr24} with $\bv\coloneqq\bu_0$. Defining
$$
\bT_{\lambda}\coloneqq\bD^{-1}((1-\lambda)\nabla \bu(x) + \lambda \nabla \bu_0(x)),
$$
we have that
\begin{equation}
\bD(\bT_{\lambda}) \to \nabla \bu \quad \textrm{a.e. in } \Omega \label{hr56}
\end{equation}
as $\lambda \to 0_+$. In addition, we see that~\eqref{hruza2} now reduces to
\begin{equation*}
\begin{split}
\frac{1}{1-\lambda}\int_{\Omega} \bT_{\lambda}\cdot (\bD(\bT_{\lambda}) - \nabla \bu_0)\ddd x&=\int_{\Omega} \bD^{-1}((1-\lambda)\nabla \bu + \lambda \nabla \bu_0)\cdot (\nabla \bu - \nabla \bu_0)\ddd x\\
& \le \int_{\Omega} \bef \cdot (\bu-\bu_0)\ddd x + \int_{\Gamma_{N}} \bg \cdot (\bu - \bu_0)\ddd S\le C,
\end{split}
\end{equation*}
where the last inequality follows from the assumptions on $\bg,\bef,\bu_0$ and the fact that $\bu \in W^{1,\infty}(\Omega)^N$ by construction. The condition~\eqref{proto-G2} then implies that for all $\lambda \in (0,\textstyle{\frac{1}{2}})$ we have
$$
\|\bT_{\lambda}\|_1 \le C(\Omega,t_c,\bg,\bef,\bu_0).
$$
Consequently, using~\eqref{hr56}, the strict monotonicity of~$\bD$, the above estimate and Fatou's lemma, we deduce that there exists a (unique) function $\bT \in L^1(\Omega)^{d\times N}$ with $\|\bT\|_1\le C(\Omega,t_c,\bg,\bef,\bu_0)$ such that $\bT_{\lambda} \to \bT$ a.e. in $\Omega$ as $\lambda \rightarrow 0_+$, and $\nabla \bu = \bD(\bT)$, as asserted. Obviously, the minimizer~$\bu$ is also unique, by strict convexity of~$F^*$ on the set $\bD(\mathbb{R}^{d\times N})$ from Lemma~\ref{basici} (combined with an inclusion of the form~\eqref{hr23} for two potential minimizers~$\bu, \bv$).

In addition, since~$\bT$ is finite almost everywhere, we see that the first part of~\eqref{hr24} automatically holds, and consequently~\eqref{hruza2} is valid for any choice of $\bv$. Next we verify~\eqref{nerovnost-g}. For this purpose we consider in~\eqref{hruza2} a test function $\bv \in W^{1,\infty}(\Omega)^N$ such that $\bv -\bu_0 \in W^{1,\infty}_{\Gamma_D}(\Omega)^N$ and $\bD(\tilde{\bT})=\nabla \bv$ for some function $\tilde{\bT} \in L^1(\Omega)^{d\times N}$. We now want to let $\lambda \to 0_+$ on the left-hand side of~\eqref{hruza2} and we therefore begin by expressing~\eqref{hruza2} as
\begin{equation}
\int_{\Omega} \bD^{-1}((1-\lambda)\bD(\bT) + \lambda \bD(\tilde{\bT}))\cdot (\bD(\bT)-\bD(\tilde{\bT}))\ddd x \le \int_{\Omega} \bef \cdot (\bu-\bv)\ddd x + \int_{\Gamma_{N}} \bg \cdot (\bu - \bv)\ddd S. \label{hruza3}
\end{equation}
By rewriting the left-hand side of this as
$$
\begin{aligned}
&\int_{\Omega} \bD^{-1}((1-\lambda)\bD(\bT) + \lambda \bD(\tilde{\bT}))\cdot (\bD(\bT)-\bD(\tilde{\bT}))\ddd x\\
&= \frac{1}{1-\lambda}\int_{\Omega} \left(\bD^{-1}((1-\lambda)\bD(\bT) + \lambda \bD(\tilde{\bT}))-\bD^{-1}(\bD(\tilde{\bT}))\right)\cdot \left([(1-\lambda)\bD(\bT)+\lambda\bD(\tilde{\bT})]-[\bD(\tilde{\bT})]\right)\ddd x\\
&\quad +\int_{\Omega} \tilde{\bT}\cdot (\bD(\bT)-\bD(\tilde{\bT}))\ddd x,
\end{aligned}
$$
we see that, because of the monotonicity of $\bD^{-1}$ (see Lemma~\ref{basici}), the first term is nonnegative and the second term is integrable. Therefore, we can use Fatou's lemma to pass to the limit $\lambda \to 0_+$ in~\eqref{hruza3} to arrive at the claim
\begin{equation}
\int_{\Omega} \bT\cdot (\nabla \bu -\nabla \bv)\ddd x \le \int_{\Omega} \bef \cdot (\bu-\bv)\ddd x + \int_{\Gamma_{N}} \bg \cdot (\bu - \bv)\ddd S. \label{krasa1}
\end{equation}

Finally, we prove the last assertion of the lemma. By hypothesis, we can approximate any $\bw \in W^{1,\infty}_{\Gamma_D}(\Omega)^N$ by a sequence $\bw^n$ in $W^{1,\infty}_{\Gamma_D}(\Omega)^N$ in the sense of~\eqref{novyas}, and we now want to use
$$
\bv=\bu-\varepsilon\bw^n
$$
as a test function in inequality~\eqref{krasa1}. To this end, we need to justify this choice and must thus check that for sufficiently small $\varepsilon$ there exists a $\bT_{\varepsilon}^n \in L^1(\Omega)^{d\times N}$ with $\bD(\bT_{\varepsilon}^n) =\nabla \bu -\varepsilon \nabla \bw^n$. However, since $\nabla \bw^n$ is supported on the set
$\Omega_{n}\coloneqq\{x\in \Omega \colon |\bT(x)|\le n\}$, we see that $\bT_{\varepsilon}^n=\bT$ in $\Omega \setminus \Omega_{n}$. On the other hand, if $x\in \Omega_n$, then $\nabla \bu(x) \in \bD(\overline{B_n(\b0)})\subset \bD(B_{2n}(\b0)) \subset \overline{\bD(\mathbb{R}^{d\times N})}$. Since the first set is closed, the second open and the last closed, we see that for each $n$ there exists a $\delta$ such that
$$
0<\delta \le |\bT-\bQ| \quad \textrm{ for all } \bT \in \bD(\overline{B_n(\b0)}) \textrm{ and } \bQ\in \partial \bD(\mathbb{R}^{d\times N}).
$$
Consequently, setting $\varepsilon$ so small that $2\varepsilon |\nabla \bw^n|\le \delta$, we can find a closed set $K$ such that
$$
\nabla u(x) - \varepsilon \nabla \bw^n(x) \in K \subset \bD(\mathbb{R}^{d\times N}) \quad \textrm{a.e. in } \Omega_n,
$$
and therefore there exists a $\bT_{\varepsilon}^n \in L^{\infty}(\Omega_n)^{d\times N}$ such that $\bD(\bT_{\varepsilon}^n) =\nabla \bu -\varepsilon \nabla \bw^n$.
Thus, $\bv\coloneqq\bu-\varepsilon \bw^n$ is an admissible choice in~\eqref{krasa1} for $\varepsilon$ sufficiently small, and we therefore deduce that
\begin{equation}
\int_{\Omega} \bT\cdot \nabla \bw^n\ddd x \le \int_{\Omega} \bef \cdot \bw^n\ddd x + \int_{\Gamma_{N}} \bg \cdot \bw^n \ddd S. \label{krasa5}
\end{equation}
Using the weak-$*$ density property assumed in the statement of the lemma we see that~\eqref{krasa5} holds also for $\bw$, and since $\bw$ is arbitrary, it holds in fact with the equality sign. Consequently, the couple $(\bu,\bT)$ is a weak solution.
\end{proof}

\begin{proof}[Proof of Lemma~\ref{AL4}]
We start by showing that the infimum for the minimization problem~\eqref{minimiz5} is finite and attained for some $\bT \in \mathcal{S}^m$. To this end, we observe that $\mathcal{S}$ is a nonempty subset of $\mathcal{S}^m$ and the functional in~\eqref{minimiz5} coincides with $J$ on $\mathcal{S}$; hence, the infimum of the functional $\mathcal{J}$ is bounded from above via Lemma~\ref{AL3}. Moreover, using a Fubini-type argument and involving the definition of the recession function $F_\infty$, we can rewrite the functional in~\eqref{minimiz5} similarly as in the proof of Lemma~\ref{AL3} to then infer from the condition~\eqref{G2_analytic} (or~\eqref{proto-G2}) a lower bound of the form
\begin{equation*}
\mathcal{J}(\bT) = \int_{\Omega} F(\bT^r)\ddd x+\mathcal{F}_{\infty}(\bT^s)-\langle \nabla \bu_0,\bT\rangle \geq \frac{C_1}{2} |\bT|(\Omega \cup \overline{\Gamma_N}) - C
\end{equation*}
for a constant $C>0$ depending only on the data. This implies that the infimum is bounded from below. Moreover, since bounded sequences in $\mathcal{M}(\Omega \cup \overline{\Gamma_N})$ are relatively compact with respect to weak-$*$ convergence and since $\mathcal{S}^m$ is weakly-$*$ closed, we obtain the existence of a minimizer $\bT \in \mathcal{S}^m$ by Reshetnyak's lower semicontinuity result, see~\cite[Theorem 2.38]{AMBFUSPAL00}. For its rigorous application, we first extend the measures in $\mathcal{S}^m$ by zero to an open set $\Omega'$ containing $\Omega \cup \overline{\Gamma_N}$, then we rewrite $\mathcal{J}$ with the help of the function $\bar{F} \colon [0,\infty) \times \mathbb{R}^{d\times N} \to \mathbb{R}$, defined for $\bT \in \mathbb{R}^{d\times N}$ as $\bar{F}(s,\bT) \coloneqq sF(\bT/s)$ for $s >0$ and $\bar{F}(0,\bT)\coloneqq F_\infty(\bT)$ for $s=0$, in terms of a positively $1$-homogeneous integrand as
\begin{equation*}
 \mathcal{J}(\bT) + F(0) \mathcal{L}^d(\Omega' \setminus \Omega) = \int_{\Omega'} \bar{F} \left(\frac{\langle \mathcal{L}^d,\bT\rangle}{|\langle \mathcal{L}^d,\bT\rangle|}\right) \ddd |\langle\mathcal{L}^d,\bT\rangle| -\langle \nabla \bu_0,\bT\rangle ,
\end{equation*}
for which, because of the linear growth assumption on~$F$, Reshetnyak's lower semicontinuity theorem can finally be applied.

Next, we prove the restricted uniqueness assertion for the set of minimizers. Given two minimizers $\bT,\bar{\bT} \in \mathcal{S}^m$, we immediately obtain the identity $\bT^r = \bar{\bT}^r$ for the absolutely continuous parts with respect to the Lebesgue measure. Indeed, in view of the convexity of~$F$, $\mathcal{F}_{\infty}$ and $\mathcal{S}^m$, the fact that~$\bT$ and $\bar{\bT}$ are minimizers implies that also $(\bT + \bar{\bT})/2 \in \mathcal{S}^m$ is a minimizer, with
\begin{align*}
 2 \int_{\Omega} F \big((\bT^r + \bar{\bT}^r)/2\big) \ddd x & = \int_{\Omega} \big( F(\bT^r) +F(\bar{\bT}^r) \big) \ddd x, \\
 \mathcal{F}_{\infty}(\bT^s + \bar{\bT}^s) & = \big( \mathcal{F}_{\infty}(\bT^s) + \mathcal{F}_{\infty}(\bar{\bT}^s) \big).
\end{align*}
Since~$F$ is in fact strictly convex (see Lemma~\ref{basici}), the first identity immediately gives $\bT^r = \bar{\bT}^r$ a.e.~ in $\Omega$. Concerning the singular parts $\bT^s, \bar{\bT}^s$ of the minimizers, we note that $\mathcal{F}_{\infty}$ is only convex, but not strictly convex. Hence, uniqueness does not in general follow from the second identity. However, by the definition of $\mathcal{S}^m$, we still have
\begin{equation}\label{TsTsbar}
 \langle\bT^s - \bar{\bT}^s, \nabla \bw\rangle = 0 \qquad \textrm{ for all } \bw\in \mathcal{C}^1_{\Gamma_D}(\overline{\Omega})^N .
\end{equation}

Finally, we obtain $\nabla \bu=\bD(\bT^r)$ for the unique minimizer~$\bu$ to the primal problem exactly as in the proof of Lemma~\ref{AL3} by considering variations $\bT + \lambda \tilde{\bW}$ about the minimizer~$\bT$, with an arbitrary $\tilde{\bW}\in L^1(\Omega)^{d\times N}$ that satisfies~\eqref{xmen} (and hence, $\bT + \lambda \tilde{\bW}$ is a competitor in $\mathcal{S}^m$), and then by repeating the same argument, with~$\bT$ replaced by $\bT^r$. With the identity~\eqref{TsTsbar} in hand, the remaining assertion~\eqref{nerovnost-g} then follows from Lemma~\ref{AL2}.
\end{proof}

Next we prove the partial reverse implication: that solutions provide minimizers.

\begin{proof}[Proof of Lemma~\ref{AL5}]
We begin by proving the minimality of~$\bu$. To this end, we initially consider a function $\bv \in \mathcal{S}^*$ with $\nabla \bv =\bD(\tilde{\bT})$ for some $\tilde{\bT}\in L^1(\Omega)^{d\times N}$. Then, in view of the inequality~\eqref{TH00_min} (or inequality~\eqref{TT2-G}, respectively) and the convexity of~$F^*$, we have that
\begin{align}\label{minimal-ini}
 J^*(\bu) & = \int_{\Omega} F^*(\nabla \bu)- \bef \cdot \bu \ddd x -\int_{\Gamma_N} \bg \cdot \bu\ddd S \nonumber\\
 & \leq \int_{\Omega} F^*(\nabla \bu)- \bT \cdot (\nabla \bu - \nabla \bv) \ddd x - \int_\Omega \bef \cdot \bv \ddd x -\int_{\Gamma_N} \bg \cdot \bv \ddd S \nonumber\\
 & \leq J^*(\bv) + \int_{\Omega} \Big( \frac{\partial F^*(\nabla \bu)}{\partial \bB} - \bT \Big) \cdot (\nabla \bu - \nabla \bv) \ddd x = J^*(\bv),
\end{align}
where the last equality is a consequence of $\nabla \bu = \bD(\bT)$ in $\Omega$ and the formula~\eqref{derpot2}. In order to obtain the full minimality property among all functions in $\mathcal{S}^*$, we still need to admit functions $\bv \in \mathcal{S}^* $ with $\nabla \bv \in \overline{\bD(\mathbb{R}^{d\times N})}$ a.e.~in $\Omega$ (recalling that $F^* = \infty$ outside of $\overline{\bD(\mathbb{R}^{d\times N})}$). However, thanks to the condition~\eqref{proto-G2}, we have $\nabla \bu_0(x) = \bD(\bT_0(x))$ with $\bT_0(x) \in B_{t_c}(0)$ for some $t_c$, uniformly for a.e.~$x \in \Omega$, and therefore, by convexity of $\overline{\bD(\mathbb{R}^{d\times N})}$, we have the representation
\begin{equation*}
 \nabla \bv_t \coloneqq \nabla (t \bu_0 + (1-t) \bv) = \bD(\tilde{\bT}_t)
\end{equation*}
with functions $\tilde{\bT}_t \in L^1(\Omega)^{d\times N}$, for all $t \in (0,1]$. At this stage, we invoke~\eqref{minimal-ini} with $\bv = \bv_t$, and the convexity of $J^*$, to find, for all $t \in (0,1]$, that
\begin{equation*}
 J^*(\bu) \leq J^*(\bv_t) \leq t J^*(\bu_0) + (1-t) J^*(\bv) \,.
\end{equation*}
Since $J^*(u_0)$ is finite, cf.~\eqref{check}, the minimality of~$\bu$ follows in the limit $t \searrow 0$.

Next we proceed to proving the minimality of~$\bT$, under the additional assumption $\tilde{\bg} = \b0$. In this case~$\bT$ obviously belongs to $\mathcal{S}$, as a consequence of~\eqref{TH00-G}, the $L^1$-regularity of~$\bT$ and a density argument. Moreover, the minimality of~$\bT$ for $J^*$ follows similarly as above, by taking advantage of the convexity of~$F$, the identity $\nabla \bu = \bD(\bT)$ and the formula~\eqref{derpot}: indeed, we have, for any $\bar{\bT} \in \mathcal{S}$,
\begin{align*}
J(\bT) & = \int_{\Omega} F(\bT)-\nabla \bu_0 \cdot \bT \ddd x \leq \int_{\Omega} F(\bar{\bT}) - \nabla \bu_0 \cdot \bar{\bT} \ddd x + \int_\Omega \Big( \frac{\partial F(\bT)}{\partial \bT} - \nabla \bu_0 \Big) \cdot (\bT - \bar{\bT}) \ddd x \\
 & = J(\bar{\bT}) + \int_\Omega \big( \nabla \bu - \nabla \bu_0 \big) \cdot (\bT - \bar{\bT}) \ddd x = J(\bar{\bT}),
\end{align*}
where the last equality is true, since $\bu - \bu_0 \in W^{1,\infty}_{\Gamma_D}(\Omega)^N$ and $\bT, \bar{\bT}$ both belong to $\mathcal{S}$. This proves the minimality of~$\bT$ and concludes the proof.
\end{proof}
\end{appendix}

\bibliographystyle{plain}

\begin{thebibliography}{10}

\bibitem{AMBFUSPAL00}
L.~Ambrosio, N.~Fusco, and D.~Pallara.
\newblock {\em {Functions of bounded variation and free discontinuity
  problems}}.
\newblock Oxford University Press, Oxford, 2000.

\bibitem{A1984}
G.~Anzellotti.
\newblock On the extremal stress and displacement in {H}encky plasticity.
\newblock {\em Duke Math. J.}, 51(1):133--147, 1984.

\bibitem{ApBiFu10}
D.~Apushkinskaya, M.~Bildhauer, and M.~Fuchs.
\newblock On local generalized minimizers and local stress tensors for
  variational problems with linear growth.
\newblock {\em J. Math. Sci. (N.Y.)}, 165(1):42--59, 2010.
\newblock Problems in mathematical analysis. No. 44.

\bibitem{BaMur89}
J.~M. Ball and F.~Murat.
\newblock Remarks on {C}hacon's biting lemma.
\newblock {\em Proc. Amer. Math. Soc.}, 107(3):655--663, 1989.

\bibitem{BiFu99}
M.~Bildhauer and M.~Fuchs.
\newblock Regularity for dual solutions and for weak cluster points of
  minimizing sequences of variational problems with linear growth.
\newblock {\em Zap. Nauchn. Sem. S.-Peterburg. Otdel. Mat. Inst. Steklov.
  (POMI)}, 259~(Kraev. Zadachi Mat. Fiz. i Smezh. Vopr. Teor. Funkts.
  30):46--66, 296, 1999.

\bibitem{BiFu02b}
M.~Bildhauer and M.~Fuchs.
\newblock On a class of variational integrals with linear growth satisfying the
  condition of {$\mu$}-ellipticity.
\newblock {\em Rend. Mat. Appl. (7)}, 22:249--274 (2003), 2002.

\bibitem{BiFu02}
M.~Bildhauer and M.~Fuchs.
\newblock Relaxation of convex variational problems with linear growth defined
  on classes of vector-valued functions.
\newblock {\em Algebra i Analiz}, 14(1):26--45, 2002.

\bibitem{BILFUC03}
M.~Bildhauer and M.~Fuchs.
\newblock {Convex variational problems with linear growth}.
\newblock Geometric analysis and nonlinear partial differential equations.
  Springer, Berlin, 327-344, 2003.

\bibitem{BMRS2014}
M.~Bul\'\i{}\v{c}ek, J.~M\'{a}lek, K.~R Rajagopal, and E.~S\"{u}li.
\newblock On elastic solids with limiting small strain: modelling and analysis.
\newblock {\em EMS Surv. Math. Sci.}, 1(2):283--332, 2014.

\bibitem{BMRW2013}
M.~Bul\'\i{}\v{c}ek, J.~M\'{a}lek, K.~R. Rajagopal, and J.~Walton.
\newblock Existence of solutions for the anti-plane stress for a new class of
  ``strain-limiting" elastic bodies.
\newblock {\em Calc. Var. Partial Differential Equations}, 54(2):2115--2147,
  2015.

\bibitem{BMS2014}
M.~Bul\'\i{}\v{c}ek, J.~M\'{a}lek, and E.~S\"{u}li.
\newblock Analysis and approximation of a strain-limiting nonlinear model.
\newblock {\em Mathematics and Mechanics of Solids}, 1(2):283--332, 2015.

\bibitem{EKETEM99}
I.~Ekeland and R.~T\'emam.
\newblock {\em {Convex analysis and variational problems}}.
\newblock {Society for Industrial and Applied Mathematics}, Philadelphia, 1999.

\bibitem{FINN65}
R.~Finn.
\newblock Remarks relevant to minimal surfaces, and to surfaces of prescribed
  mean curvature.
\newblock {\em J. Analyse Math.}, 14:139--160, 1965.

\bibitem{FrNe81}
A.~Friedman and J.~Ne{\v{c}}as.
\newblock Systems of nonlinear wave equations with nonlinear viscosity.
\newblock {\em Pacific J. Math.}, 135(1):29--55, 1988.

\bibitem{GIAMODSOU79}
M.~Giaquinta, G.~Modica, and J.~Sou\v{c}ek.
\newblock {Functionals with linear growth in the calculus of variations I, II}.
\newblock {\em Commentat. Math. Univ. Carol.}, {\bf20}:143--156, 157--172,
  1979.

\bibitem{GR}
V.\ Girault and P.-A.\ Raviart.
\newblock {\em Finite Element Methods for Navier--Stokes Equations}, volume~5
  of {\em Springer Ser. Comp. Math.}
\newblock Springer-Verlag, 1986.

\bibitem{KRR2003}
K.~R. Rajagopal.
\newblock On implicit constitutive theories.
\newblock {\em Appl. Math.}, 48:279--319, 2003.

\bibitem{ref30}
K.~R. Rajagopal.
\newblock Elasticity of elasticity.
\newblock {\em Zeitschrift fur Angewandte Math Phys}, 58:309--417, 2007.

\bibitem{KRRref33}
K.~R. Rajagopal and A.~R. Srinivasa.
\newblock On the response of non-dissipative solids.
\newblock {\em Proc. Roy. Soc. London A}, 463:357--367, 2007.

\bibitem{Ro70}
R.~T. Rockafellar.
\newblock On the virtual convexity of the domain and range of a nonlinear
  maximal monotone operator.
\newblock {\em Math. Ann.}, 185:81--90, 1970.

\bibitem{soucek71}
V.~Sou{\v{c}}ek.
\newblock The nonexistence of a weak solution of {D}irichlet's problem for the
  functional of minimal surface on nonconvex domains.
\newblock {\em Comment. Math. Univ. Carolinae}, 12:723--736, 1971.

\bibitem{TRUDINGER67}
N.~S. Trudinger.
\newblock On imbeddings into {O}rlicz spaces and some applications.
\newblock {\em J. Math. Mech.}, 17:473--483, 1967.

\bibitem{Uhl}
K.~Uhlenbeck.
\newblock Regularity for a class of non-linear elliptic systems.
\newblock {\em Acta Math.}, 138:219--240, 1977.

\end{thebibliography}

\end{document}